\documentclass[reqno,12pt]{amsart}
\usepackage[margin=1in,footskip=0.5in]{geometry}
\usepackage{enumitem}

\usepackage[latin1]{inputenc}
\usepackage[english]{babel}
\usepackage{subfigure}
\usepackage{amsmath,amssymb,amsthm}
\usepackage{ae}
\usepackage[T1]{fontenc}
\usepackage{graphicx}
\usepackage{epstopdf}
\usepackage{color}
\usepackage{amsaddr}
\newcommand{\e}{{\mathrm e}}

\newcommand{\be}{\begin{eqnarray}}
\newcommand{\ee}{\end{eqnarray}}
\newcommand{\ba}{\begin{align}}
\newcommand{\ea}{\end{align}}
\newcommand{\bi}{\begin{itemize}}
\newcommand{\ei}{\end{itemize}}
\usepackage{fullpage}

\newcommand{\eqlab}[1]{\label{eq:#1}}
\renewcommand{\eqref}[1]{(\ref{eq:#1})}

\newcommand{\figref}[1]{Fig.~\ref{fig:#1}}
\newcommand{\figlab}[1]{\label{fig:#1}}

\newcommand{\remref}[1]{Remark~\ref{remark:#1}}
\newcommand{\remlab}[1]{\label{remark:#1}}

\newcommand{\appref}[1]{Appendix~\ref{app:#1}}

\newtheorem{theorem}{Theorem}[section]
\newtheorem{proposition}[theorem]{Proposition}

\newtheorem{lemma}[theorem]{Lemma}
\newtheorem{cor}[theorem]{Corollary}

\newtheorem{remark}[theorem]{Remark}

\newcommand{\assumptionref}[1]{Assumption~\ref{assumption:#1}}
\newcommand{\assumptionlab}[1]{\label{assumption:#1}}
\newtheorem{assumption}{Assumption}
\numberwithin{equation}{section}

\begin{document}
\title{A geometric approach to exponentially small splitting: Zero-Hopf bifurcations of arbitrary co-dimension}
%\title{Exponentially small splitting in zero-Hopf bifurcations of arbitrary co-dimension}
\author{K. Uldall Kristiansen}
\address{Department of Applied Mathematics and Computer Science,
Technical University of Denmark,
2800 Kgs. Lyngby,
Denmark }

 \begin{abstract}
  In this paper, we present a geometric approach to exponentially small splitting in zero-Hopf bifurcations of arbitrary co-dimension. In further details, we consider a family of problems that generalizes the third order Michelsen/Kuramoto-Sivashinsky-type equations $\epsilon^{2(\kappa-1)} f'''+f'={Q}(f)$, where ${Q}$ is an arbitrary real polynomial with $\kappa=\operatorname{degree}Q\ge 2$ simple real roots. For $\epsilon=0$, the system has $(\kappa-1)$-many heteroclinic connections and we describe the exponentially small splitting for each connection for all $0<\epsilon\ll 1$ under a separate nondegeneracy condition. In particular, we find that the $j$th-splitting is of the form $\epsilon^{-\frac{3\kappa}{2}}\exp\left({-\epsilon^{1-\kappa}T^j}\right)(C^j+\mathcal O(\epsilon))$, where $T^j>0$ can be calculated explicitly and be interpreted as the blowup time of special unbounded solutions of the $\epsilon=0$-limiting system in imaginary time $f'=iQ(f)$. Our approach extends a similar geometric method developed by the present author for the generic zero-Hopf bifurcation of co-dimension two, which does not rely on explicit time-parametrizations of the unperturbed heteroclinic connections and their singularities in the complex plane. Instead, we work exclusively in the complexified phase space and relate the exponentially small splitting to the lack of analyticity of center-like invariant manifolds of associated generalized saddle-nodes. %We believe that our approach applies to general analytic unfoldings of zero-Hopf bifurcations in $\mathbb R^3$ with higher co-dimension. We also expect that our approach can be used in related problems, including discrete problems. %In fact, our approach applies to general analytic unfoldings of zero-Hopf bifurcations in $\mathbb R^3$ with higher co-dimension. %We find that the third order system provides a nice forum to present the method.
%  In this paper, we extend the slow divergence-integral from slow-fast systems, due to De Maesschalck, Dumortier and Roussarie, to smooth systems that limit onto piecewise smooth ones as $\epsilon\rightarrow 0$. In slow-fast systems, the slow divergence-integral  is an integral of the divergence along a canard cycle with respect to the slow time and it has proven very useful in obtaining good lower and upper bounds of limit cycles in planar polynomial systems. In this paper, our slow divergence-integral is based upon integration along a generalized canard cycle for a piecewise smooth two-fold bifurcation (of type visible-invisible called $VI_3$). We use this framework to show that the number of limit cycles in regularized piecewise smooth polynomial systems is unbounded.  
% tbd
% \end{abstract}

\noindent \textbf{keywords.} Zero-Hopf bifurcation, blowup, exponentially small splitting, GSPT, generalized saddle-nodes.
 \end{abstract}
 \bigskip
\smallskip

\maketitle

\tableofcontents
% \note{Why not consid¨er general $k+1$ co-dimensional zero-Hopf? Need more conditions on unperturbed manifolds and conditions on $(y,z)$-subsystem that ensure that the hyperbolic points are saddles.}
% \note{Need to think about notation: Should $\psi$ be $X$?}
\section{Introduction}
In this paper, we consider the following class of singularly perturbed systems
\begin{equation}\label{eq:x2y2z2new0}
\begin{aligned}
 { x}'_2  &={Q}(x_2)+\epsilon^{-\kappa} F(\epsilon x_2, \epsilon^{\kappa} y_2, \epsilon^{\kappa}z_2,\epsilon),\\%\epsilon^{\kappa-1} \sum_{n=0}^{\lfloor \frac{\kappa}{2}\rfloor} \frac{1}{2^{2n} n!^2}{Q}^{(2n)}( x_2) ((\epsilon^{\kappa-1} y)^2+(\epsilon^{\kappa-1} z)^2)^n+\epsilon^{2(\kappa-1)}\widetilde R_{2\kappa-1,2},\\
 \epsilon^{\kappa-1} y_2'  &=  z_2 -  \frac12 y_2 \epsilon^{\kappa-1} {Q}'(x_2) +\epsilon^{-\kappa} G(\epsilon x_2, \epsilon^{\kappa} y_2, \epsilon^{\kappa} z_2,\epsilon),\\
 \epsilon^{\kappa-1} z_2'  &= - y_2 -  \frac12 z_2 \epsilon^{\kappa-1} {Q}'(x_2) +\epsilon^{-\kappa} H( x_2, \epsilon^{\kappa} y_2, \epsilon^{\kappa} z_2,\epsilon),
%   \dot{ y}_2  &=  z_2 -{ y_2} \epsilon^{\kappa-1}\sum_{n=0}^{\lfloor \frac{\kappa-1}{2}\rfloor } \frac{1}{2^{2n+1}(n+1)! n!}{Q}^{(2n+1)}((\epsilon^{\kappa-1} y)^2+(\epsilon^{\kappa-1} z)^2)^n+\epsilon^{\kappa-1}\widetilde kjkoiS_{2\kappa-1,2},\\
%   \dot{ z}_2  &= - y_2 -{ z}_2\epsilon^{\kappa-1}\sum_{n=0}^{\lfloor \frac{\kappa-1}{2}\rfloor } \frac{1}{2^{2n+1}(n+1)! n!}{Q}^{(2n+1)}((\epsilon^{\kappa-1} y)^2+(\epsilon^{\kappa-1} z)^2)^n+\epsilon^{\kappa-1} \widetilde T_{2\kappa-1,2},
%   \dot \epsilon &=0.
\end{aligned}
\end{equation}
where $\kappa\in \mathbb N$, $\kappa\ge 2$, is the degree of the polynomial ${Q}$, which we write in the following form:
\begin{align}
 {Q}(f)  = -f^{\kappa}+ \sum_{\alpha=0}^{\kappa-1} a_\alpha f^\alpha.\label{eq:Qkdef}
 \end{align}
 The functions $F,G,H$ are all assumed to be real-analytic functions, satisfying
\begin{align}
 W(r\breve x,r^\kappa \breve y,r^\kappa\breve z,r\breve \epsilon)=\mathcal O(r^{3\kappa-2}),\quad W=F,G,H,\label{eq:Xcond}
\end{align}
as $r\to 0$
uniformly for $(\breve x,\breve y,\breve z,\breve \epsilon)\in \mathbb S^3$, where
$$\mathbb S^3 : = \left\{(\breve x,\breve y,\breve z,\breve \epsilon )\in \mathbb C^4\,:\,\vert \breve x\vert^2+\vert \breve y\vert^2+\vert \breve z\vert^2+\vert \breve \epsilon\vert^2=1\right\}.$$
% Moreover,

 %, see \lemmaref{} below, under the following assumption on ${Q}$:
The system \eqref{x2y2z2new0} is partly motivated by the following singularly perturbed, third order differential equation:
\begin{align}
 \epsilon^{2(\kappa-1)} f''' + f' ={Q}(f),\quad f=f(t),\quad 0<\epsilon\ll 1, \label{eq:3rd}
\end{align}
see Appendix \ref{app:A} for details on bringing \eqref{3rd} into the form \eqref{x2y2z2new0}. Perturbations of \eqref{3rd} by functions of the form
\begin{align}\eqlab{Wpert}
\epsilon^{-\kappa} W(\epsilon f,\epsilon^{\kappa} f',\epsilon^{2\kappa-1}f'',\epsilon),
\end{align}
with $W$ being real-analytic satisfying \eqref{Xcond}, are also allowed, see \remref{apprem}.
% and extensions hereof, 
% where ${Q}$ is a polynomial with $\kappa=\operatorname{deg}Q\in \mathbb N$, $\kappa\ge 2$. By appropriate scalings, it is without loss of generality to write
% \begin{align}
%  {Q}(f)  = -f^{\kappa}+ \sum_{j=0}^{\kappa-1} a_j f^j.\label{eq:Qkdef}
%  \end{align}
If 
\begin{align}
\label{eq:Q2}
{Q}(f) = -f^2+1,
\end{align}
  so that $\kappa=2$, then \eqref{3rd} is known as the Michelsen system; it appears through a travelling wave ansatz of the Kuramoto-Sivashinsky equation, see \cite{michelson1986a}. For $\epsilon=0$, we obtain a reduced first-order system
\begin{align}\label{eq:fred}
 f' ={Q}(f).
\end{align}
If ${Q}$ has $j\in \{2,\ldots, \kappa\}$ real simple roots, then there are $j-1$ many heteroclinic connections. In the case of the Michelsen system, it is well-known (see e.g. \cite{raghavan1997a}) that the single connection does not persist for $\epsilon>0$, but instead there is an exponentially small splitting with respect to $\epsilon\rightarrow 0$:
% \begin{theorem}
Let $f^\pm (\cdot, \epsilon)$, denote the unique solutions of \eqref{3rd} for \eqref{Q2} with $f^\pm (0,\epsilon)=0$ and $f^\pm (t,\epsilon)\rightarrow \pm 1$ as $t\rightarrow \pm \infty$, respectively. Then there is a nonzero constant $C\in \mathbb R \setminus \{0\}$ so that
\begin{align}\label{eq:michaelson}
 \epsilon \frac{\partial^2 f^+}{\partial t^2}(0,\epsilon)-\epsilon \frac{\partial^2 f^-}{\partial t^2}(0,\epsilon) = \epsilon^{-3} \e^{-\frac{\pi}{2\epsilon}}C( 1+o(1)),
\end{align}
cf. \cite{raghavan1997a}, see also \cite{byatt-smith1991a,grimshaw1991a}.
% see e.g. \cite{}.
% \end{theorem}
Due to the symmetry $(f,t)\mapsto (-f,-t)$ of the Michelsen problem, the fact that $$\frac{\partial^2 f^+}{\partial t^2}(0,\epsilon)-\frac{\partial^2 f^-}{\partial t^2}(0,\epsilon)\ne 0\quad \mbox{for $\epsilon>0$},$$ suffices to conclude that the heteroclinic connection for $\epsilon=0$ does not persist (the connection would have to be odd with respect to $t$). %To prove the asymptotic expansion \eqref{eq:michaelson}, the authors in \cite{raghavan1997a} use that the
We will throughout assume the following:
\begin{assumption}
	\assumptionlab{ass1}
	%The vector field $Z : \mathbb R^2 \times \mathbb R \times I \to \mathbb R^2$ satisfies the following constraints:
	${Q}$ is real and has $\kappa$-many simple real roots $\{q^j\}_{j=1}^\kappa$, that we order as follows:
$$q^{\kappa}<q^{\kappa-1}<\cdots<q^{1}.$$
	% 	$q^k<q^{k-1}<\cdots<q^1$ of ${Q}$.
\end{assumption}
Under this assumption, the equation \eqref{fred}, which is also the reduced problem of \eqref{x2y2z2new0}, has $(\kappa-1)$-many heteroclinic connections and in this paper we are concerned with the asymptotic splitting of each of these for all $0<\epsilon\ll 1$.

From the perspective of Geometric Singular Perturbation Theory (GSPT) \cite{fen3,jones_1995}, the splitting problem is complicated by the fact that $(y_2,z_2)=(0,0),x_2\in \mathbb R$ defines a normally elliptic (not hyperbolic) critical manifold for \eqref{x2y2z2new0} in the singular limit $\epsilon\to 0$.

The system \eqref{x2y2z2new0} also relates to the real-analytic normal form of the zero-Hopf bifurcation in $\mathbb R^3$:
% We believe that it is possibly to generalize our results to a normal form of the zero-Hopf bifurcation of higher co-dimension $k$:
\begin{equation}\label{eq:normalformn}
\begin{aligned}
 \dot x &=L_n(x,y^2+z^2,\mu)+\mathcal O(n+1),\\
 \dot y &=K_n(x,y^2+z^2,\mu)y+z+\mathcal O(n+1),\\
 \dot z &=K_n(x,y^2+z^2,\mu)z-y+\mathcal O(n+1),
\end{aligned}
\end{equation}
see e.g. \cite[Chapter 3]{haragus2011a}.
Here $\mu\in \mathbb R^m$, with $\vert\mu \vert\ge 0$ small enough, denote the unfolding parameters. Moreover, $L_n$ and $K_n$, are polynomials of degree $n$ whose $1$-jet vanish, i.e. $W_n(0,0,0)=0$ and $DW_n(0,0,0)=0$ for $W=L,K$.  Finally, $\mathcal O(n+1)$ indicate terms whose $n$-jet with respect to $(x,y,z,\mu)=(0,0,0,0)$ vanish. In the derivation of \eqref{normalformn}, we have divided the right hand side by a nonzero quantity (corresponding to a reparametrization of time).
Now, if we take $n=m=2$ in \eqref{normalformn}, and suppose that
\begin{align*}
\frac{\partial^2 }{\partial x^2}L_2(0,0,0)\ne 0,\quad \operatorname{det} D_{(\mu_1,\mu_2)} \begin{pmatrix}
                           L_2\\
                           K_2
                          \end{pmatrix}(0,0,0)\ne 0,
\end{align*}
then it is elementary to bring the system into the following form
\begin{equation}\label{eq:zeroHopf0}
%  \begin{equation}\eqlab{modelapp1}
\begin{aligned}
 \dot x&=-x^2+\mu_1+a (y^2 +z^2) + F(x,y,z,\mu_1,\mu_2),\\
 \dot y&= z+(\mu_2+b x ) y+G(x,y,z,\mu_1,\mu_2),\\
 \dot z&= -y+(\mu_2+b x) z +H(x,y,z,\mu_1,\mu_2),
\end{aligned}
\end{equation}
with $F,G,H$ each being real-analytic and third order with respect to $(x,y,z,\mu_1,\mu_2)\rightarrow 0$,
% \end{align}
see e.g. \cite[Eq. (1.1)]{bkt}. This is a normal form for the unfolding of the generic co-dimension two zero-Hopf bifurcation. %ere $\mu=(\mu_1,\mu_2)$ denotes the unfolding parameters.
For $(\mu_1,\mu_2)=(0,0)$, we have
\begin{equation}\label{eq:zeroHopf00}
\begin{aligned}
 \dot x&=-x^2+a (y^2 +z^2) + F(x,y,z,0,0),\\
 \dot y&= z+b x  y +G(x,y,z,0,0),\\
 \dot z&= -y+b xz+H(x,y,z,0,0),
\end{aligned}
\end{equation}
with the linearization about the singularity $(x,y,z)=(0,0,0)$ having eigenvalues $0,\pm i$. Let
\begin{align}\label{eq:mu12}
\begin{cases}\mu_1= \epsilon^2, \\
\mu_2=b \sigma\epsilon,
\end{cases}\quad 0\le \epsilon\ll 1,
\end{align}
 and define $(x_2,y_2,z_2)$ by
\begin{align}\label{eq:buhopf}
\begin{cases} x=\epsilon x_2, \\ y=\epsilon^2 y_2, \\ z=\epsilon^2 z_2.
\end{cases}
 \end{align}
 This brings \eqref{zeroHopf0}  into the following slow-fast form
% \begin{align}
 \begin{equation}\label{eq:zeroHopf}
\begin{aligned}
 x_2' &=-x_2^2+1+\epsilon^2 a (y_2^2+z_2^2)+\epsilon F_2(x_2,\epsilon y_2,\epsilon z_2,\epsilon),\\
 \epsilon y_2'&= z_2+\epsilon b(x_2+\sigma) y_2+\epsilon G_2(x_2,\epsilon y_2,\epsilon z_2,\epsilon),\\
 \epsilon z_2'&= -y_2+\epsilon b(x_2+ \sigma) z_2 +\epsilon H_2(x_2,\epsilon y_2,\epsilon z_2,\epsilon),
\end{aligned}
\end{equation}
where $\frac{d}{dt}=()'=\epsilon^{-1} \dot{()}$
and
\begin{align}
 W_2(x_2,\epsilon y_2,\epsilon z_2,\epsilon):=\epsilon^{-3} W(\epsilon x_2,\epsilon (\epsilon y_2),\epsilon (\epsilon z_2),\epsilon^2,\epsilon b\sigma ),\quad W=F,G,H.\label{eq:W2defn}
\end{align}
Here $F_2,G_2,H_2$ all have analytic extensions to $\epsilon=0$.
% \end{align}
For $\epsilon=0$, we therefore obtain the same limiting system as for the Michelsen problem: $$ x_2' = -x_2^2+1,$$ with a heteroclinic connection of the form $x_2(t)=\tanh(t)\rightarrow \pm 1$ as $t\rightarrow \pm \infty$. Notice that \eqref{x2y2z2new0} with $Q(f)=-f^2+1$ and $\kappa=2$ takes the form \eqref{zeroHopf}  with $a=\sigma=0$ and $b=1$.

It is well-known that a similar result to \eqref{michaelson} holds true for \eqref{zeroHopf}:
\begin{theorem}\label{thm:baldom2013a}
 \cite{baldom2013a}
Consider \eqref{zeroHopf} and suppose that
\begin{align}\label{eq:condbc}
 b>0 \quad \mbox{and}\quad -1 <\sigma<1,
\end{align}
both fixed.
Then for all $0<\epsilon\ll 1$ there exist a local one-dimensional unstable manifold $\mathbf{W}_{loc}^u$ for a saddle-focus equilibrium near $(x_2,y_2,z_2)=(-1,0,0)$ and a local one-dimensional stable $\mathbf{W}_{loc}^s$ for a saddle-focus equilibrium near $(x_2,y_2,z_2)=(1,0,0)$. Moreover, $\mathbf{W}_{loc}^u$ and $\mathbf{W}_{loc}^s$ are both transverse to $\{x_2=0\}$ and there exists a constant $C\ge 0$ such that $d(\epsilon):=\operatorname{dist}(\mathbf{W}_{loc}^u\cap \{x_2=0\},\mathbf{W}_{loc}^s\cap \{x_2=0\})$ has the following expansion:
\begin{align}
d(\epsilon) = \epsilon^{-b-2}\e^{-\frac{\pi}{2\epsilon}} (C+\mathcal O(1/\log \epsilon)),\label{eq:dist}
\end{align}
for all $0<\epsilon\ll 1$.
% Here $\operatorname{dist}$ denotes the Hausdorff distance in the $(x_2,y_2,z_2)$-space.
\end{theorem}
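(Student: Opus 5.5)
We prove the theorem by a Melnikov-type argument in the complex time domain, in the spirit of inner--outer matching. There are three ingredients: outer approximations of the invariant manifolds on complex domains, an inner equation near the singularities of $\tanh$, and an exponentially small bound on the difference via an analytic periodic function.

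\textbf{Outer step.} Parametrize the unperturbed connection by $x_2=\tanh(t)$, $(y_2,z_2)=(0,0)$. Its singularities nearest the real axis are at $t=\pm i\pi/2$. Write $\mathbf{W}^{u,s}$ as graphs $(y_2,z_2)=\Gamma^{u,s}(x_2,\epsilon)$ over $x_2$, or equivalently as functions of $t$. Passing to the complex variables $w=y_2+iz_2$ and $\bar w$ diagonalizes the fast rotation with eigenvalues $\mp i/\epsilon$. A fixed-point argument in weighted sup-norms then shows that $\Gamma^{u}$ extends analytically to a boomerang-shaped domain in $t$. This domain contains the real segment and reaches within $\mathcal O(\epsilon)$ of $\pm i\pi/2$, and similarly for $\Gamma^s$. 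The hypotheses $b>0$ and $|\sigma|<1$ are used here. They guarantee that the equilibria near $(\mp1,0,0)$ are saddle-foci with one-dimensional unstable and stable manifolds, respectively. They also fix the growth $(t\mp i\pi/2)^{-b-\ldots}$ of the outer solutions near the singularities, because the term $\epsilon b(x_2+\sigma)$ integrates to a factor $\cosh(t)^{b}$-type behaviour.

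\textbf{Inner step.} Near $t=i\pi/2$ introduce $\tau=(t-i\pi/2)/\epsilon$ and the scaled variables $x_2=\epsilon^{-1}X$, $y_2=\epsilon^{-2}Y$, $z_2=\epsilon^{-2}Z$. This is exactly undoing the blowup \eqref{buhopf}: it returns to the $\mu=0$ system \eqref{zeroHopf00} in a complex time. In this scaling $F,G,H$ contribute at leading order, and $\sigma$ drops out.

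The inner equation has solutions $\Psi^{u,s}$ defined on sectors $\tau\to-\infty$ and $\tau\to+\infty$, each asymptotic to $-1/\tau$. Their difference on the overlap (the imaginary direction) is of the form $\tau^{b}\e^{-i\tau}(C_{\rm in}+o(1))$. This gives the Stokes constant $C$, possibly $0$, which is why the theorem only asserts $C\ge0$.

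We then match $\Gamma^{u,s}$ to $\Psi^{u,s}$ in an intermediate region $|t-i\pi/2|\sim\epsilon^{\gamma}$ with $0<\gamma<1$. The matching error is only logarithmically small relative to the main term, because the $b$-dependent powers produce $\log\epsilon$ corrections. This is the source of $\mathcal O(1/\log\epsilon)$.

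\textbf{Difference step.} The difference $\Delta=\Gamma^u-\Gamma^s$ satisfies a linear equation along the connection. After factoring out the fundamental matrix $\e^{\mp it/\epsilon}\Phi(t)$, with $\Phi\sim\cosh^{b}$, we obtain a quantity that is analytic on the intersection domain and $2\pi\epsilon$-quasiperiodic in $t$. A Fourier/maximum-principle (Lazutkin-type) argument then shows that its size at real $t$ is controlled by its size near $t=\pm i(\pi/2-\mathcal O(\epsilon))$. This yields the factor $\e^{-\pi/(2\epsilon)}$. The inner asymptotics supply the leading coefficient $\epsilon^{-b-2}C$; the $\epsilon^{-2}$ comes from the scaling of $(y_2,z_2)$ and the $\epsilon^{-b}$ from $\Phi$.

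Finally, we evaluate the difference at $x_2=0$, i.e.\ at $t=0$, and combine the contributions of the two conjugate singularities. This gives the real expansion \eqref{dist}. Transversality of $\mathbf W^{u,s}_{loc}$ to $\{x_2=0\}$ is immediate from the outer step, since $x_2'\approx1$ there.

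The main obstacle is the matching step. The outer solutions must be extended close enough to the singularity, the $\log$-corrections from the linear term $b x_2 y_2$ must be controlled, and the inner difference must be shown to be asymptotically $\tau^b\e^{-i\tau}C_{\rm in}$ uniformly in the overlap. We would first try to obtain these bounds by treating the nonlinear terms $F,G,H$ perturbatively in weighted norms $|\tau|^{-k}$.
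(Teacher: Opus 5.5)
Your outline is the classical complex-time inner--outer matching scheme. That is exactly how the paper summarises the proof in \cite{baldom2013a}. The paper does not reprove Theorem \ref{thm:baldom2013a} itself, so at the level of strategy you are reproducing the cited argument.

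The route the paper actually develops (following \cite{bkt}) is different. It never parametrises by $t$ or locates the poles $\pm i\pi/2$; instead it works with graphs over $x$ in the complexified phase space after blowing up $(x,y,z,\epsilon)=0$.
\begin{itemize}
\item Your inner solutions become the unperturbed manifolds $(y,z)=x^2\psi^l(x)$, $x\in S^l$. These are $1$-sums on sectors of a common Gevrey-$1$ series of a generalized saddle-node, and your Stokes constant becomes the statement $\psi^2\ne\psi^1$ on $S^1\cap S^2$.
\item Your outer fixed point and intermediate matching zone are replaced by extending $\mathbf W^{u,s}$ from $x=\mathcal O(\epsilon)$ to $|x|=\mathcal O(1)$ in the $\breve x=1$ chart (Propositions \ref{prop:Hn} and \ref{prop:WlO1}). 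There they are shown to be $\mathcal O(\epsilon)$-close to $x^2\psi^l$.
\item Your linear boundary-value problem is replaced by a Fenichel normal form along the imaginary-time orbit $\mathcal H^1=i\mathbb R$. On this orbit the difference equation is slow--fast with a saddle-type critical manifold, and $\pi/2$ appears as a residue, namely the blow-up time $T^1$.
\end{itemize}
The paper's route avoids explicit time parametrisations, which is what makes general $\kappa$ tractable, and it needs no $\epsilon^\gamma$-matching region. In the setting of Theorem \ref{thm:main} it also gives a $C^\infty$ correction $\e^{\Xi(\epsilon)}$ rather than $\mathcal O(1/\log\epsilon)$. Your route, on the other hand, addresses the full unfolding \eqref{zeroHopf} directly. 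Theorem \ref{thm:main} only covers the subclass \eqref{x2y2z2new0}; for $\kappa=2$ this forces $\sigma=0$ and excludes generic cubic terms, and the general case is deferred to \cite{bkt}.

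Some details of your sketch need correcting.
\begin{itemize}
\item Sign of the inner solution. Since $\tanh(i\pi/2+\epsilon\tau)=\coth(\epsilon\tau)$ and the inner equation is $\dot x=-x^2+\cdots$, the $x$-component of the inner solutions behaves like $+1/\tau$, not $-1/\tau$.
\item The difference step. There is no genuinely periodic object here for a Lazutkin/maximum-principle argument. For one-dimensional manifolds, $\Delta$ solves a $2\times 2$ linear ODE, so after factoring out a fundamental matrix only a constant vector $c$ remains. The real work is to construct a fundamental matrix whose two modes are $\e^{\mp it/\epsilon}\cosh^b(t)\e^{b\sigma t}(1+o(1))$ uniformly on the boomerang domain up to the matching region; its coefficients depend on $\Gamma^{u,s}$. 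You then read off each component of $c$ at $t=\pm i(\pi/2-\epsilon^{\gamma})$, where the corresponding mode dominates, and use reality to relate the two.
\item The source of $\log\epsilon$. The $b$-powers match exactly, since $\cosh(i\pi/2+\epsilon\tau)^b=(i\sinh(\epsilon\tau))^b$, so they are not what produces $\log\epsilon$. Logarithms come from cubic terms such as $x^3$ in $\dot x$, which shift the inner time by $\mathcal O(\log\epsilon)$ relative to $(t-i\pi/2)/\epsilon$; this shift must be absorbed in the matching.
\item Growth near the poles. The growth of $\Gamma^{u,s}$ near the poles is set by the nonlinear terms $F,G,H$, not by $b$. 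The parameter $b$ enters only through the homogeneous solutions.
\end{itemize}
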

Notice in comparison with \cite{baldom2013a}, that their distance is described in terms of the $(x,y,z)$-coordinates, see also \cite{bkt}.

The proof of Theorem \ref{thm:baldom2013a} in \cite{baldom2013a} follows a well-known approach: The invariant manifolds are parameterized by $t\in \mathbb C$, defined through the unperturbed solution $x_2=\tanh(t)$, up close to the poles $t=\pm i\frac{\pi}{2}$. Upon setting $t=\pm i\frac{\pi}{2}+\epsilon t_{2}$, $t_2=\mathcal O(1)$, a so-called inner problem is defined for $\epsilon\to 0$ and it is shown that the stable and unstable manifolds lie close to special invariant manifold solutions of this inner problem. In this way, with the distance between the stable and unstable manifolds known near the poles, the desired separation at $x_2=0$ can be obtained through the solution of a certain linear boundary value problem, obtained by writing an equation for the difference. This functional analytic approach has been successful in many different problems with exponentially small splitting, see e.g. \cite{MR4940205,MR4743478,MR4892796} and references therein.

% The constant $C$ in \eqref{dist} (known as a Stokes constant)  is essentially a measure of the separation of the invariant manifold solutions of the inner problem. Generically, $C\ne 0$ and \eqref{dist} therefore determines an asymptotic formula for the splitting.

In the concurrent paper \cite{bkt}, the present author presents an alternative dynamical systems-based method for proving Theorem \ref{thm:baldom2013a}. In \cite{bkt}, we do not use complex time and the explicit solution of the $\epsilon=0$ limit, but instead work exclusively in the complexified phase space $(x,y,z)\in \mathbb C^3$ and view \eqref{buhopf} as a local version of the blowup transformation
\begin{align}
 r\ge 0,\,(\breve x,\breve y,\breve z,\breve \epsilon )\in \mathbb S^3\mapsto \begin{cases}
                                                x = r\breve x,\\
                                                y = r^2\breve y, \\
                                                z = r^2\breve z,\\
                                                \epsilon =r\breve \epsilon.
                                               \end{cases}\eqlab{blowupthis}
\end{align}
In particular, we use the associated chart $\breve x=1$, with chart-specific coordinates $(r_1,y_1,z_1,\epsilon_1)$, defined by
\begin{align}\eqlab{brevex1}
 \begin{cases}
                                                x = r_1,\\
                                                y = r_1^2y_1, \\
                                                z = r_1^2z_1,\\
                                                \epsilon =r_1\epsilon_1,
                                               \end{cases}
% \end{align*}
\end{align}
to extend the stable and unstable manifolds, which are naturally parameterized in compact subsets of the $(x_2,y_2,z_2)$-space as graphs over $x_2$, to order $x=\mathcal O(1)\in \mathbb C$, $\vert x\vert>0$ sufficiently small. For the extension, we apply the flow in appropriate normal form coordinates, obtained by following the approach in \cite{uldall2024a}, using two separate notions of formal invariant manifolds: (i) formal series with respect to $r_1$ with $\epsilon_1$-dependent coefficients, and (ii) formal series with respect to $\epsilon_1$ with $r_1$-dependent coefficients.
In particular, we use that within $\epsilon_1=0$, we have two invariant manifolds defined over two separate sectors (with nonempty intersection along the imaginary axis) of the complex $x$-plane; these manifolds, which we refer to as the unperturbed manifolds, correspond to the invariant manifolds of the inner problem in \cite{baldom2013a} and define stable and unstable sets of \eqref{zeroHopf00} for $\epsilon=0$. Our results show that the unperturbed manifolds give the leading order approximation of the stable and unstable manifolds for $x=\mathcal O(1)\in \mathbb C$, $\vert x\vert>0$, as $\epsilon\to 0$. Therefore whenever the unperturbed manifolds are distinct on the common domain, we obtain an $\mathcal O(1)$-separation of the stable and unstable manifolds for  $x=\mathcal O(1)\in \mathbb C$, $\vert x\vert>0$, for $0<\epsilon\ll 1$. To determine the separation of the invariant manfiolds at $x_2=0$, we derive a linear equation for the difference (as in \cite{baldom2013a}) for $x$ on the imaginary axis $x\in i \mathbb R$, $\vert x\vert\ge 0$ small enough. As a novel aspect, \cite{bkt} identifies  this problem as a slow-fast system with a normally hyperbolic critical manifold (of saddle-type) for $\epsilon\to 0$, and therefore use GSPT to set up a Fenichel-type normal form, see \cite{fen3,jones_1995}. From this normal form, the difference can easily be integrated from $x=\pm i \delta$, $\delta>0$ fixed small enough, to $x=0$. Here we also use that the equations are real-analytic.

In this paper, we demonstrate the power of the approach in \cite{bkt} by applying it to the system \eqref{x2y2z2new0} under \assumptionref{ass1} and \assumptionref{ass2} (a nondegeneracy condition) below. In this way, we determine an asymptotic expansion for the splitting of the $j$th connection as
\begin{align}\eqlab{dj}
 d^j(\epsilon) = \epsilon^{-\frac{3\kappa}{2}}\e^{-\epsilon^{1-\kappa}T^j } \left(C^j+\mathcal O(\epsilon)\right),\quad T^j= \left| \sum_{l=1}^j \frac{\pi}{Q'(q^l)}\right|>0,
\end{align}
for $j\in \{1,\ldots,\kappa-1\}$. We delay the precise statement to Theorem \ref{thm:main} below.

We believe that our approach also applies to the equations \eqref{normalformn} under the assumption
\begin{align}\label{eq:condL}
\begin{cases}\frac{\partial^\alpha }{\partial x^\alpha}L_\kappa(0,0,0) = 0, &\forall \alpha \in \{0,\ldots,\kappa-1\},\\
 \frac{\partial^\kappa }{\partial x^\kappa}L_\kappa(0,0,0) \ne 0,
\end{cases}
% \quad \mbox{and} \quad ,.
\end{align}
with $n=\kappa\ge 3$.
This corresponds to a zero-Hopf bifurcation of higher co-dimension $\kappa\ge 3$.
However, in order to state our result on \eqref{normalformn} for $n=\kappa$ satisfying \eqref{condL}, we would have to restrict the general unfolding parameters $\mu\in \mathbb R^\kappa$ to an appropriate $\epsilon$-family (similar to \eqref{mu12}) and apply additional assumptions on the system parameters (similar to \eqref{condbc}) in order to have co-existing one-dimensional heteroclinic connections in the singular limit $\epsilon=0$. Since we think of our method (generalizing the approach of \cite{bkt} to higher co-dimension) as our most significant contribution, we have therefore chosen to work with the family \eqref{x2y2z2new0} instead, where our results on the exponentially small splitting can be stated in a more straightforward manner. We believe that the general case is similar, but leave such extensions to the interested reader.

Although our approach follows \cite{bkt}, we believe that the extension to \eqref{x2y2z2new0} with arbitrary $\kappa\in \mathbb N$, $\kappa\ge 2$, is nontrivial. Firstly, we will have to work with invariant manifolds over $2(\kappa-1)$-many sectors. As a consequence, the integration of the differences require extra work. We cannot just perform the integration along the imaginary axis as in \cite{bkt}. Here our main insight is that this integration can be accurately described along $(\kappa-1)$-many  special unbounded solutions of \eqref{fred} in imaginary time $\dot f = i Q(f)$ that correspond to heteroclinic connections (that we refer to as $\mathcal H^j$, $j\in \{1,\ldots,\kappa-1\}$, below) on the associated Poincar\'e sphere. In fact, \textit{$T^j>0$ in \eqref{dj} is the finite blowup time along $\mathcal H^j$ for $f'(s)=iQ(f(s))$, $s\ge 0$, with $f(0)=p^j:=\mathcal H^j\cap \{\operatorname{Im}(f)=0\}$}. For $\kappa=2$ and $Q(f)=-f^2+1$, one easily finds that $\mathcal H^1=i\mathbb R$ and $T^1 = \pi/2$, in agreement with \eqref{michaelson} and Theorem \ref{thm:baldom2013a} (see \remref{rem_michaelson} below). We note that the blowup in imaginary time is intrinsically related to singularities of time parametrizations of connecting orbits, see \cite{fiedler2025a}, but we do not use explicit time-parametrizations in the present manuscript. Instead, our construction is purely based upon phase space methods of dynamical systems theory.

To further emphasize the connection between \eqref{normalformn}, with $n=\kappa$ satisfying \eqref{condL}, and \eqref{x2y2z2new0}, we notice that that the change of coordinates, defined by
\begin{align}\label{eq:xyzx2y2z20}
(x,y,z)\mapsto \begin{cases}
 x_2 =\epsilon^{-1} x,\\
 y_2 = \epsilon^{-\kappa} y,\\
 z_2 =\epsilon^{-\kappa} z,
\end{cases}
 \end{align}
 brings \eqref{x2y2z2new0} into the form \eqref{normalformn}:
 \begin{equation}\label{eq:XYZfirst}
\begin{aligned}
 \dot x&={P}(x,\epsilon)+F(x,y,z,\epsilon),\\
 \dot y&=z-\frac12 P'_x(x,\epsilon) y+G(x,y,z,\epsilon),\\
 \dot z &=-y -\frac12 P'_x(x,\epsilon) z+ H(x,y,z,\epsilon),
\end{aligned}
\end{equation}
with
\begin{align}
 {P} (x,\epsilon) := \epsilon^\kappa {Q}(\epsilon^{-1} x) = -x^\kappa + \sum_{\alpha=0}^{\kappa-1} \epsilon^{\kappa-\alpha} a_\alpha x^\alpha.\label{eq:PkQkfirst}
\end{align}
$L_\kappa=P$ clearly satisfies the conditions in \eqref{condL} for $\epsilon=0$. The system \eqref{XYZfirst} is therefore a one-dimensional ``sub-unfolding'' (through the small parameter $\epsilon$) of a zero-Hopf bifurcation of co-dimension $\kappa$. Moreover, the fact that $K_\kappa=-\frac12 P'_x$ guarantees that there are $(\kappa-1)$-many co-existing one-dimensional heteroclinic connections in the singular limit (for details see Lemma \ref{lem:Wuspj} below).

We finally emphasize that the factor $1/2$ (see \eqref{x2y2z2new0} and \eqref{XYZfirst}) is not important; it can be replaced by any positive number with only minor changes to the document. We leave this to the interested reader. %We believe that this is straightforward and leave the details of this to the interested reader. %In conclusion, we find that \eqref{x2y2z2new0} provides a suitable form to generalize the approach \cite{bkt} to arbitrary co-dimension.

In future work, it is my objective to study complex roots. The belief is that this can be handled by the same approach after some modifications. %We believe that this requires some rethinking.% see \secref{conclusion} for further discussion of this direction.

\subsection{Overview}
The paper is structured as follows. In Section \ref{sec:basics}, we first describe some basic properties of \eqref{x2y2z2new0}, including the existence of $2(\kappa-1)$-many one-dimensional invariant manifolds under the \assumptionref{ass1}. Next, we study the reduced problem \eqref{fred} in real and imaginary time on the Poincar\'e sphere, see Lemmata \ref{lem:realtime} and \ref{lem:imagtime}, and describe invariant manifolds of \eqref{XYZfirst} with $\epsilon=0$ that are graphs over different sectors of the complex $x$-plane, see Lemma \ref{lem:Sj}; we refer to the invariant manifolds of \eqref{XYZfirst} with $\epsilon=0$ as the unperturbed invariant manifolds. These results play an important role for our approach and for the precise statement of our main result, Theorem \ref{thm:main}, in Section \ref{sec:main}. Here Theorem \ref{thm:main} gives the asymptotic expansion (recall \eqref{dj}) of the exponentially small splitting of the invariant manifolds under a separate nondegeneracy condition, see \assumptionref{ass2}. We numerically verify the nondegeneracy condition for the family \eqref{3rd} in Remark \ref{rem:phin}.

The remainder of the paper is then devoted to the proof of Theorem \ref{thm:main}. For this we follow \cite{bkt}, by first describing the stable and unstable manifolds as graphs over large (but fixed) compact sets of the complex $x_2$-plane, see Proposition \ref{prop:Wuspj2} in Section \ref{sec:exit}. Next, in Section \ref{sec:exitt}, we continue these invariant manifolds (by application of the forward and backward flow) to $x=\mathcal O(1)\in \mathbb C$, $\vert x\vert>0$, by working in certain chart-specific coordinates $(r_1,y_1,z_1,\epsilon_1)$ associated with a blow-up transformation (that is induced by \eqref{xyzx2y2z20}). The main result of this section,
Proposition \ref{prop:WlO1}, shows that the stable and unstable manifolds are given by the unperturbed invariant manifolds in Lemma \ref{lem:Sj} over appropriate sectors with $x=\mathcal O(1)\in \mathbb C$, $\vert x\vert>0$, for $\epsilon\to 0$. Unperturbed invariant manifolds that are defined over overlapping domains are in general different (see \assumptionref{ass2} and Lemma \ref{lem:Psil}) and this leads to an $\mathcal O(1)$-separation of the stable and unstable manifolds for $x=\mathcal O(1)\in \mathbb C$, $\vert x\vert>0$. To determine the actual separation for $x\in \mathbb R$, we derive a linear equation for the difference in Section \ref{sec:diff}, which we diagonalize along the special trajectories $\mathcal H^j$ of \eqref{fred} in imaginary time by using classical GSPT. By integrating the resulting equations, we complete the proof in Section \ref{sec:complete}. %We conclude the paper in Section \ref{sec:conclusion} with a discussion section. % completes the proof. % We will assume the following condition throughout:
We illustrate our approach in \figref{blowup}. %Here we use that the system is real-analytic so that $(x,y,z)(t)$ is a solution if only if $(\overline x,\overline y,\overline z)(t)$ is a solution, where the bar denotes the complex conjugate.

 \begin{figure}[h!]
\begin{center}
{\includegraphics[width=.85\textwidth]{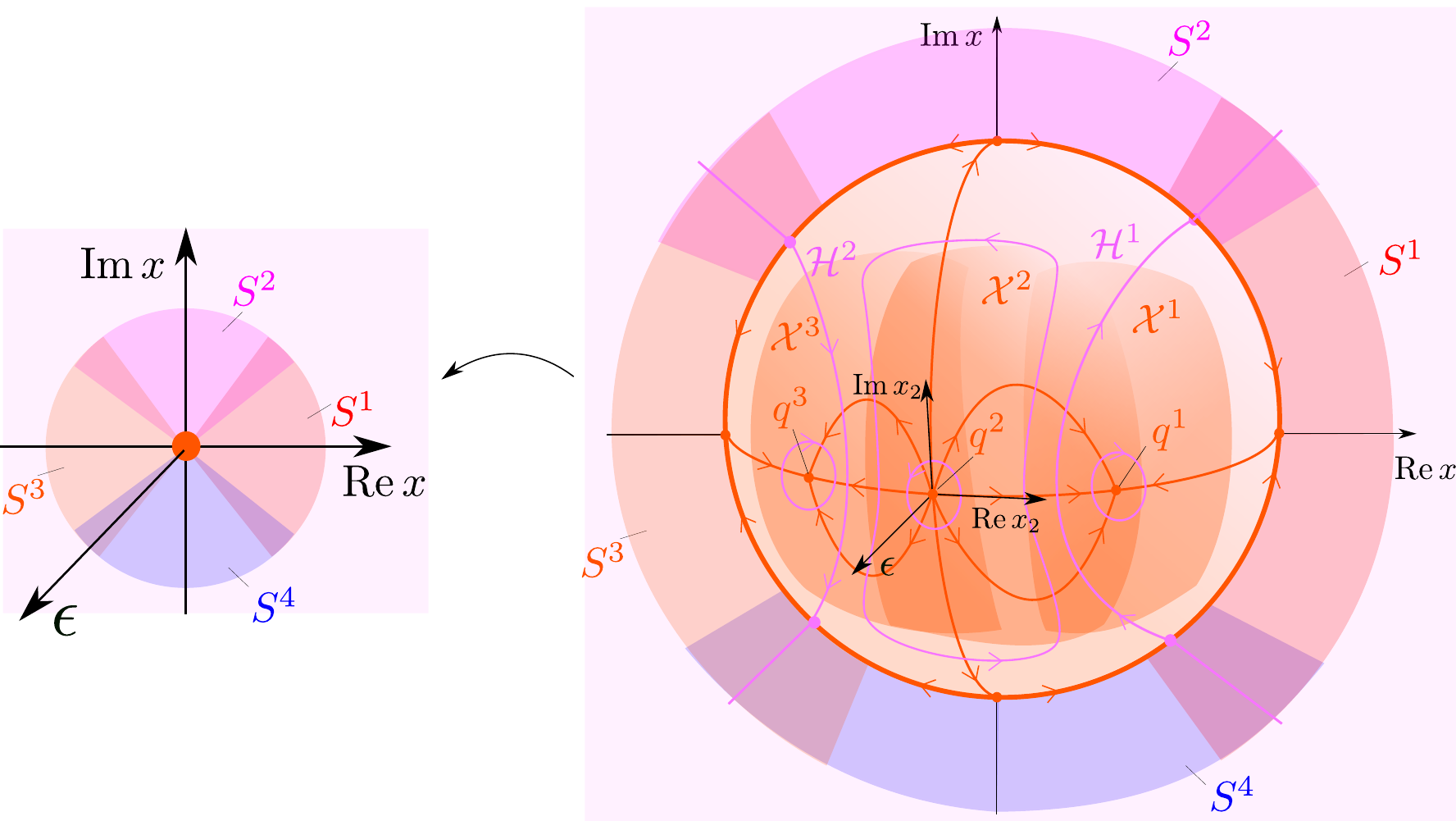}}
% \subfigure[]{\includegraphics[width=.45\textwidth]{psi0.pdf}}
\end{center}
\caption{Illustration of our approach (for $\kappa=3$). We blow up $(x,y,z,\epsilon)=(0,0,0,0)$ to a complex 3-sphere $\mathbb S^3$, see \eqref{bu}, which we here indicate in the $(\operatorname{Re}(x),\operatorname{Im}(x),\epsilon)$-projection. On top of the sphere, we use the coordinates $(x_2,y_2,z_2)$ to describe the existence of the saddle-foci $(x_2,y_2,z_2)=\mathbf{q}^{j}(\epsilon)\to (q^j,0,0)$ as $\epsilon\to 0$, and their one-dimensional invariant manifolds as graphs  $(y_2,z_2)=m_2^{j}(x_2,\epsilon)$ over fixed large compact domains $x_2\in \mathcal X^{j}\subset \mathbb C$ for all $0<\epsilon\ll 1$. Here $j\in \{1,\ldots,\kappa\}$. The orange curves are orbits of $x_2'=Q(x_2)$, $x_2\in \mathbb C$, (on the Poincar\'e sphere,) and $\mathcal X^j$ is in the basin of attraction for $q^j$ for the forward flow for $j$ odd (backward flow for $j$ even, respectively). On the other hand, the purple curves are orbits of $x_2'=iQ(x_2)$, $x_2\in \mathbb C$. A central objective of our approach is to extend the unstable and stable manifolds of $q^{j}$ by working in the coordinates $(r_1,y_1,z_1,\epsilon_1)$, see \eqref{hatx1}, so that they can be compared with the unperturbed invariant manifolds that are defined as graphs over $x\in S^{l}$, $l\in \{1,\ldots,2(\kappa-1)\}$, within $\epsilon=0$. Our results show that if the unperturbed manifolds are non-analytic, then there is an $\mathcal O(1)$-splitting of the unstable and stable manifolds $\mathbf{W}^{u/s}(\mathrm{q}(\epsilon))$ for $x\in S^j\cap S^{j+1}$, $\vert x\vert>0$ small enough, as $\epsilon\to 0$ (for at least one $j\in \{1,\ldots,\kappa-2\}$). Finally, we integrate the differences $(\Delta y_2^j,\Delta z_2^j):=(m_2^{j+1}-m_2^{j})(x,\epsilon)$ along the special trajectories $\mathcal H^j$ (in purple) of $x_2' = iQ(x_2)$ to the point $p^j:=\mathcal H^j \cap \{\operatorname{Im}(x_2)=0\}\in (q^{j+1},q^j)\subset \mathbb R$. In this step, we view the equation for the difference as a slow-fast system with a normally hyperbolic critical manifold, which we then treat by GSPT (through a Fenichel-type normal form). }
\figlab{blowup}
% Remark: If $U$ smooth for $|u|<\nu$, the compact manifolds lie inside $|u|<\nu$ for large $n$
\end{figure}

\subsection{Notation}
We use superscripts ($j,l$) when we refer to the roots of $Q$ and more broadly to objects (including $\mathcal H^j$) relating to \eqref{fred} in real and imaginary time. It should be clear from the context when superscripts ($\alpha,\beta,\gamma,\kappa$) denote powers. Subscripts are on the other hand used to refer to the charts ($\breve x=1$ and $\breve \epsilon=1$ with subscripts $1$ and $2$, respectively, as usual in blowup \cite{krupa_extending_2001}) and for summation indices ($\alpha,\beta,\gamma$). $B_\delta^{\alpha}\subset \mathbb C^\alpha$ (writing $B_\delta=B_\delta^1$ for $\alpha=1$) denotes the open ball of radius $\delta>0$ centered at the origin in $\mathbb C^\alpha$, $\alpha\in \mathbb N$. Finally, $i$ denotes the imaginary unit throughout.

% In this paper, we demonstrate the power of the approach in \cite{bkt} by applying it to the following normal form for \eqref{x2y2z2new0}. Notice that

% We will add the following technical assymption
%
%
%
% In fact, our novel results cover a more broad class of problems (see \eqref{tildexuv}). Indeed, we can write \eqref{3rd} as the first order system
% \begin{align*}
%  ss
% \end{align*}
%  We plan to study complex roots in future work.. We discuss these aspects further in \secref{conclusion}.

\section{Basic properties and statement of the main result}\label{sec:basics}
We  consider \eqref{x2y2z2new0} in the following form
\begin{equation}\label{eq:x2y2z2slow}
\begin{aligned}
 x_2'  &={Q}(x_2)+\epsilon^{2(\kappa-1)} F_2(x_2,y_2,z_2,\epsilon),\\%\epsilon^{\kappa-1} \sum_{n=0}^{\lfloor \frac{\kappa}{2}\rfloor} \frac{1}{2^{2n} n!^2}{Q}^{(2n)}( x_2) ((\epsilon^{\kappa-1} y)^2+(\epsilon^{\kappa-1} z)^2)^n+\epsilon^{2(\kappa-1)}\widetilde R_{2\kappa-1,2},\\
 \epsilon^{\kappa-1}y_2'  &=  z_2 -  \frac12 y_2 \epsilon^{\kappa-1} {Q}'(x_2) +\epsilon^{2(\kappa-1)} G_2(x_2,y_2,z_2,\epsilon),\\
 \epsilon^{\kappa-1}z_2'  &= - y_2 -  \frac12 z_2 \epsilon^{\kappa-1} {Q}'(x_2) +\epsilon^{2(\kappa-1)} H_2(x_2,y_2,z_2,\epsilon),
%   \dot{ y}_2  &=  z_2 -{ y_2} \epsilon^{\kappa-1}\sum_{n=0}^{\lfloor \frac{\kappa-1}{2}\rfloor } \frac{1}{2^{2n+1}(n+1)! n!}{Q}^{(2n+1)}((\epsilon^{\kappa-1} y)^2+(\epsilon^{\kappa-1} z)^2)^n+\epsilon^{\kappa-1}\widetilde S_{2\kappa-1,2},\\
%   \dot{ z}_2  &= - y_2 -{ z}_2\epsilon^{\kappa-1}\sum_{n=0}^{\lfloor \frac{\kappa-1}{2}\rfloor } \frac{1}{2^{2n+1}(n+1)! n!}{Q}^{(2n+1)}((\epsilon^{\kappa-1} y)^2+(\epsilon^{\kappa-1} z)^2)^n+\epsilon^{\kappa-1} \widetilde T_{2\kappa-1,2},
%   \dot \epsilon &=0.
\end{aligned}
\end{equation}
with $()'=\frac{d}{dt}$ and where
\begin{align*}
 W_2(x_2,y_2,z_2,\epsilon) :=\epsilon^{-3\kappa+2}W(\epsilon x_2,\epsilon^\kappa y_2,\epsilon^\kappa z_2,\epsilon),\quad W=F,G,H.
\end{align*}
Notice that $F_2,G_2$ and $H_2$ are well-defined and analytic for $\epsilon=0$ by \eqref{Xcond}.
\eqref{x2y2z2fast} is a slow-fast system, with the time $t$ being the slow time. By introducing the fast time $\tau =\epsilon^{1-\kappa} t$, we obtain the fast formulation
\begin{equation}\label{eq:x2y2z2fast}
\begin{aligned}
 \dot{ x}_2  &=\epsilon^{\kappa-1}\left({Q}(x_2)+\epsilon^{2(\kappa-1)} F_2(x_2,y_2,z_2,\epsilon)\right),\\%\epsilon^{\kappa-1} \sum_{n=0}^{\lfloor \frac{\kappa}{2}\rfloor} \frac{1}{2^{2n} n!^2}{Q}^{(2n)}( x_2) ((\epsilon^{\kappa-1} y)^2+(\epsilon^{\kappa-1} z)^2)^n+\epsilon^{2(\kappa-1)}\widetilde R_{2\kappa-1,2},\\
 \dot y_2  &=  z_2 -  \frac12 y_2 \epsilon^{\kappa-1} {Q}'(x_2) +\epsilon^{2(\kappa-1)} G_2(x_2,y_2,z_2,\epsilon),\\
 \dot z_2  &= - y_2 -  \frac12 z_2 \epsilon^{\kappa-1} {Q}'(x_2) +\epsilon^{2(\kappa-1)} H_2(x_2,y_2,z_2,\epsilon),
%   \dot{ y}_2  &=  z_2 -{ y_2} \epsilon^{\kappa-1}\sum_{n=0}^{\lfloor \frac{\kappa-1}{2}\rfloor } \frac{1}{2^{2n+1}(n+1)! n!}{Q}^{(2n+1)}((\epsilon^{\kappa-1} y)^2+(\epsilon^{\kappa-1} z)^2)^n+\epsilon^{\kappa-1}\widetilde S_{2\kappa-1,2},\\
%   \dot{ z}_2  &= - y_2 -{ z}_2\epsilon^{\kappa-1}\sum_{n=0}^{\lfloor \frac{\kappa-1}{2}\rfloor } \frac{1}{2^{2n+1}(n+1)! n!}{Q}^{(2n+1)}((\epsilon^{\kappa-1} y)^2+(\epsilon^{\kappa-1} z)^2)^n+\epsilon^{\kappa-1} \widetilde T_{2\kappa-1,2},
%   \dot \epsilon &=0.
\end{aligned}
\end{equation}
with $\dot{()}=\frac{d}{d\tau}$.
% with respect to a fast time defined by $\dot{()}=\epsilon^{\kappa-1}()'$, and
% where $\kappa\in \mathbb N$, $\kappa\ge 2$, is the degree of the polynomial ${Q}$, which we write in the following form:
% \begin{align}
%  {Q}(f)  = -f^{\kappa}+ \sum_{j=0}^{\kappa-1} a_j f^j.\label{eq:Qkdef}
%  \end{align}
%  The functions $R,S,T$ are all assumed to be analytic functions satisfying
% \begin{align}
%  X(r\breve x,r^\kappa \breve y,r^\kappa\breve z,r\breve \epsilon)=\mathcal O(r^{3\kappa-2})\quad X=R,S,T.\eqlab{Xcond}
% \end{align}
Setting $\epsilon=0$ in \eqref{x2y2z2fast} gives the following layer problem:
\begin{equation}\label{eq:layer}
\begin{aligned}
 \dot x_2 &=0,\\
 \dot y_2 &=z_2,\\
 \dot z_2 &= -y_2,
\end{aligned}
\end{equation}
which has a normally elliptic critical manifold $S$ defined by $(y_2,z_2)=(0,0)$; the linearization having eigenvalues $0,\pm i$. On the other hand, by
setting $\epsilon=0$ in \eqref{x2y2z2slow} we obtain the reduced problem:
\begin{equation}\label{eq:reduced}
\begin{aligned}
 x_2'&={Q}(x_2),
%  0&=z_2,\\
%  0 &=-y_2.
\end{aligned}
\end{equation}
on $S$.
% on $(y_2,z_2)=(0,0)$.
The dynamics of \eqref{layer} and \eqref{reduced} (in green and blue, respectively) are illustrated in \figref{x2y2z2} for $\kappa=4$ (under \assumptionref{ass1}).

\begin{lemma}\label{lem:Wuspj}
 Let $q\in \mathbb R$ denote a simple real root of ${Q}$, recall \eqref{Qkdef}. Then there is an analytic function $\mathbf{q}:[0,\epsilon_0)\rightarrow \mathbb R^3$:
 \begin{align*}
 \mathbf{q}(\epsilon)=(q,0,0)+\mathcal O(\epsilon^{2(\kappa-1)}),
 \end{align*}
 so that $\mathbf{q}(\epsilon)$ is a hyperbolic saddle-focus of \eqref{x2y2z2fast} for all $\epsilon\in (0,\epsilon_0)$, $0<\epsilon_0\ll 1$, with the following eigenvalues of the linearization
 \begin{align*}
 \begin{cases}
  \lambda_1 = \epsilon^{\kappa-1} {Q}'(q)+\mathcal O(\epsilon^{2(\kappa-1)}),\\
  \lambda_{2} = i - \frac12 \epsilon^{\kappa-1} {Q}'(q)+\mathcal O(\epsilon^{2(\kappa-1)}),\\
  \lambda_3 =\overline \lambda_2.
  \end{cases}
 \end{align*}
In particular, if ${Q}'(q)<0$ then $\mathbf{W}^s(\mathbf{q}(\epsilon))$ is one-dimensional whereas $\mathbf{W}^u(\mathbf{q}(\epsilon))$ is two-dimensional for all $\epsilon\in (0,\epsilon_0)$. If ${Q}'(q)>0$ then it is the other way around: $\mathbf{W}^s(\mathbf{q}(\epsilon))$ is two-dimensional whereas $\mathbf{W}^u(\mathbf{q}(\epsilon))$ is one-dimensional for all $\epsilon\in [0,\epsilon_0)$.
\end{lemma}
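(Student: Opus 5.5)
We will work with the fast formulation \eqref{x2y2z2fast} and find the equilibrium by the implicit function theorem, treating $\nu:=\epsilon^{\kappa-1}$ as the small parameter where convenient. Equilibria satisfy
\begin{align*}
 0&={Q}(x_2)+\epsilon^{2(\kappa-1)}F_2(x_2,y_2,z_2,\epsilon),\\
 0&=z_2-\tfrac12 \epsilon^{\kappa-1}{Q}'(x_2)y_2+\epsilon^{2(\kappa-1)}G_2(x_2,y_2,z_2,\epsilon),\\
 0&=-y_2-\tfrac12 \epsilon^{\kappa-1}{Q}'(x_2)z_2+\epsilon^{2(\kappa-1)}H_2(x_2,y_2,z_2,\epsilon).
\end{align*}
Here $F_2,G_2,H_2$ are analytic at $\epsilon=0$ by \eqref{Xcond}. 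Write the right-hand side as $\Phi(x_2,y_2,z_2,\epsilon)$. At $(q,0,0,0)$ we have $\Phi=0$, and
$$D_{(x_2,y_2,z_2)}\Phi(q,0,0,0)=\begin{pmatrix} {Q}'(q)&0&0\\0&0&1\\0&-1&0\end{pmatrix},$$
which is invertible because $q$ is simple, so ${Q}'(q)\neq 0$. The analytic implicit function theorem then gives an analytic solution $\mathbf q(\epsilon)$ for $\epsilon$ in a complex neighbourhood of $0$. Since $\Phi$ is real for real arguments, $\mathbf q$ is real on $[0,\epsilon_0)$.

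For the size of the correction, observe that $\Phi(x_2,y_2,z_2,\epsilon)$ with $\epsilon$ entering only through $\epsilon^{\kappa-1}$ and $\epsilon^{2(\kappa-1)}$ terms vanishes at $(q,0,0)$ up to $\mathcal O(\epsilon^{2(\kappa-1)})$. The $\epsilon^{\kappa-1}{Q}'$ terms are multiplied by $y_2$ or $z_2$ and so vanish at $(q,0,0)$. Hence $\Phi(q,0,0,\epsilon)=\mathcal O(\epsilon^{2(\kappa-1)})$. Together with the uniformly bounded inverse of $D\Phi$ near the base point, this gives $\mathbf q(\epsilon)-(q,0,0)=\mathcal O(\epsilon^{2(\kappa-1)})$, for instance via a Newton/contraction estimate.

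Next we linearize the vector field at $\mathbf q(\epsilon)$. Since $\mathbf q(\epsilon)$ differs from $(q,0,0)$ by $\mathcal O(\epsilon^{2(\kappa-1)})$ and all perturbation terms carry the factor $\epsilon^{2(\kappa-1)}$, the Jacobian is
$$J(\epsilon)=\begin{pmatrix} \epsilon^{\kappa-1}{Q}'(q)&0&0\\0&-\tfrac12\epsilon^{\kappa-1}{Q}'(q)&1\\0&-1&-\tfrac12\epsilon^{\kappa-1}{Q}'(q)\end{pmatrix}+\mathcal O(\epsilon^{2(\kappa-1)}).$$
The leading matrix has the simple eigenvalue $\epsilon^{\kappa-1}{Q}'(q)$ and the pair $\pm i-\tfrac12\epsilon^{\kappa-1}{Q}'(q)$.

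The one delicate point is that the first eigenvalue is itself small, of order $\epsilon^{\kappa-1}$. We therefore need the perturbation to shift it only by $\mathcal O(\epsilon^{2(\kappa-1)})$, which naive eigenvalue continuity does not automatically give.

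To handle this, we split the spectrum. The $2\times2$ block has eigenvalues $\pm i+\mathcal O(\epsilon^{\kappa-1})$, which are separated from the first eigenvalue by a distance of order $1$. Perturbation theory for simple eigenvalues then gives
$$\lambda_1=J_{11}+\mathcal O(|J_{12}|\,|J_{21}|)+\ldots=\epsilon^{\kappa-1}{Q}'(q)+\mathcal O(\epsilon^{2(\kappa-1)}).$$
Concretely, a near-identity change of basis with off-diagonal size $\mathcal O(\epsilon^{2(\kappa-1)})$ block-diagonalizes $J$. After this change, the $(1,1)$ entry is $\epsilon^{\kappa-1}{Q}'(q)+\mathcal O(\epsilon^{2(\kappa-1)})$ and the $2\times2$ block is perturbed by $\mathcal O(\epsilon^{2(\kappa-1)})$. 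Its eigenvalues are simple (separated by approximately $2i$), so they move by the same order, giving $\lambda_2$. Since $J$ is real, $\lambda_3=\overline{\lambda_2}$.

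Finally, hyperbolicity and the dimension count follow for $0<\epsilon\ll1$. The quantities $\operatorname{Re}\lambda_1$ and $\operatorname{Re}\lambda_{2,3}$ have opposite signs, namely those of ${Q}'(q)$ and $-{Q}'(q)$, because the $\epsilon^{\kappa-1}$ term dominates the $\mathcal O(\epsilon^{2(\kappa-1)})$ error. So if ${Q}'(q)<0$ there is one stable direction and a two-dimensional unstable focus, and if ${Q}'(q)>0$ it is the reverse. The stable and unstable manifold theorem gives the stated dimensions of $\mathbf W^{s}(\mathbf q(\epsilon))$ and $\mathbf W^{u}(\mathbf q(\epsilon))$.
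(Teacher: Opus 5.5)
Your proposal is correct and takes the same route as the paper. You use the implicit function theorem for $\mathbf q(\epsilon)$ (invertibility coming from ${Q}'(q)\neq 0$), then observe that $\operatorname{Re}\lambda_1$ and $\operatorname{Re}\lambda_{2,3}$ have opposite signs for $0<\epsilon\ll 1$; beyond that you only supply details the paper leaves implicit, namely the $\mathcal O(\epsilon^{2(\kappa-1)})$ bound on $\mathbf q(\epsilon)-(q,0,0)$ and the eigenvalue expansions via block-diagonalization, and you rightly claim hyperbolicity only for $\epsilon>0$.
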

\begin{proof}
 The existence of $\mathbf{q}(\epsilon)$ is an easy consequence of the implicit function theorem. To see that the equilibrium is a hyperbolic saddle-focus, we notice that $\operatorname{Re}(\lambda_{2,3}) \operatorname{Re}(\lambda_1)<0$  for all $0<\epsilon\ll 1$ under the assumption that $p$ is a simple real root ${Q}'(p)\ne 0$.
\end{proof}
Under \assumptionref{ass1}, we obtain $\kappa$-many hyperbolic saddle-foci $\mathbf{q}^j(\epsilon)$, $j\in \{1,\ldots,\kappa\}$, with $\mathbf{q}^j(0)=(q^j,0,0)$, each having a one-dimensional invariant manifold
\begin{align*}
\mathbf{W}^{\sigma^j}(\mathbf{q}^j(\epsilon)) \quad \mbox{with}\quad  \begin{cases}
  \sigma^j = s & \mbox{ if $j$ is odd},\\
  \sigma^j = u& \mbox{ if $j$ is even}, \end{cases}
\end{align*}
for all $0<\epsilon \ll 1$, see \figref{x2y2z2} (in red). We are interested in asymptotic formulas for the $(\kappa-1)$-many separations of the stable and unstable invariant manifolds. The separation will be measured within planes $\{x_2=p^j\}$ for some $p^j\in (q^{j+1},q^j)$, $j\in \{1,\ldots,\kappa-1\}$, to be specified later (see Lemma \ref{lem:imagtime}). However, in order to determine the real separation it is crucial to extend \eqref{x2y2z2fast} to complex variables $(x_2,y_2,z_2)\in \mathbb C^3$, see e.g. \cite{baldom2013a,bkt}.

\begin{figure*}[t!]
		\centering
		{\includegraphics[width=0.7\textwidth]{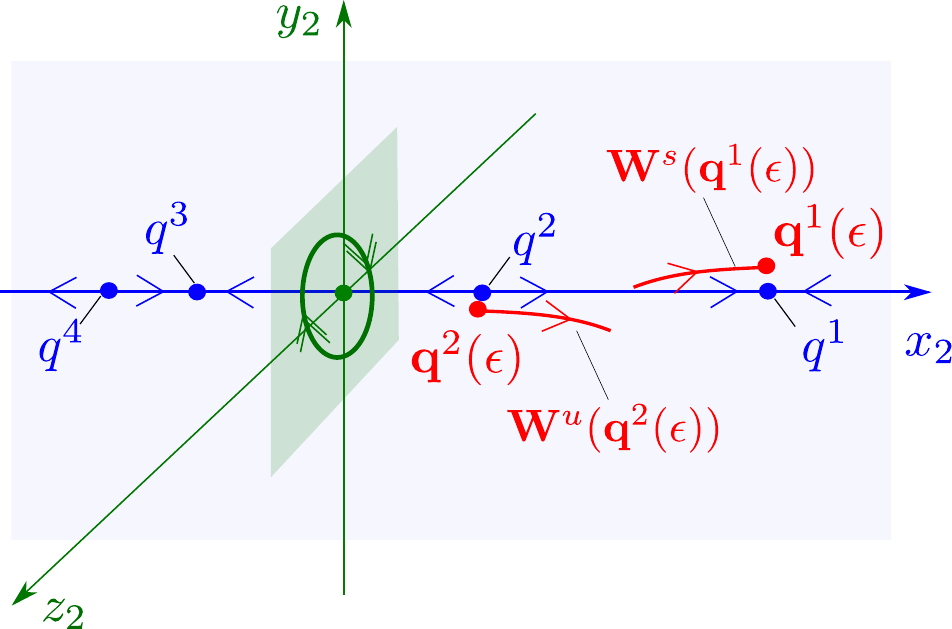}}
				\caption{The slow-fast dynamics of \eqref{x2y2z2fast} for $\kappa=4$. The layer problem \eqref{layer} is in green whereas the reduced problem \eqref{reduced} is in blue. The $x_2$-axis is the critical manifold, which is normally elliptic. In red we show stable and unstable manifolds of the perturbed points $\mathbf{q}^1(\epsilon)$ and $\mathbf{q}^2(\epsilon)$ for $0<\epsilon\ll 1$.  We are concerned with the asymptotic splitting of $\mathbf{W}^{\sigma^j}(\mathbf{q}^j(\epsilon))$ and $\mathbf{W}^{\sigma^{j+1}}(\mathbf{q}^{j+1}(\epsilon))$ within some fixed section $\{x_2=p^j\}$ with $p^j\in (q^{j+1},q^j)$. }
		\label{fig:x2y2z2}
	\end{figure*}
	
% % \subsection{A normal form}
% 
% 
% 
% 
% 
\subsection{The reduced problem in real and imaginary time}
In this section, we consider
\begin{align}\label{eq:realtime}
 \frac{dx_2}{ds} &={Q}(x_2),
\end{align}
and
\begin{align}\label{eq:imagtime}
 \frac{dx_2}{ds} &=i {Q}(x_2).
\end{align}
% with $\dot{()}=\frac{d}{ds}$ with $s\in \mathbb R$.
These two systems can be understood as the reduced problem \eqref{reduced}:
\begin{align*}
 \frac{dx_2}{dt} ={Q}(x_2),
\end{align*}
in real time $t=s\in \mathbb R$ and imaginary time $t=is\in i \mathbb R$, respectively.

We will study \eqref{realtime} and \eqref{imagtime} using Poincar\'e compactification, see e.g. \cite{dumortier2006a}. The compactification is based upon
\begin{align*}
 \begin{cases}\operatorname{Re}(x_2) = {\breve u}{\breve w}^{-1},\\
 \operatorname{Im}(x_2) = {\breve v}{\breve w}^{-1},
 \end{cases}
\end{align*}
with $(\breve u,\breve v,\breve w)\in \mathbb H^2$. Here $\mathbb H^2$ denotes the hemisphere:
\begin{align*}
 \mathbb H^2:=\{(\breve u,\breve v,\breve w)\,:\,\breve u^2+\breve v^2+\breve w^2=1,\,\breve w\ge 0\}.
\end{align*}
The equator circle $\mathbb S^1 = \mathbb H^2\cap \{\breve w=0\}$ represents $x_2=\infty$.
There is a one-to-one correspondence between $x_2\in \mathbb C$ and $(\breve u,\breve v,\breve w)\in \operatorname{int}\mathbb H^2:=\mathbb H^2\cap \{\breve w>0\}$.
Any object $\breve \gamma \in \operatorname{int}\mathbb H^2$ will be denoted by $\gamma$ when it is understood as a subset of the complex $x_2$-plane (and vice versa). For example, the roots $q^j$ of ${Q}$ are denoted by $\breve q^j\in \operatorname{int}\mathbb H^2$ when viewed on the hemisphere.
% Therefore a trajectory $\gamma \subset \mathbb C$ of \eqref{realtime} defines a subset $\breve \gamma \subset \operatorname{int}\mathbb H$ under the Poincar\'e compactification.
On the other hand, in a neighborhood of $\mathbb S^1$, we use the coordinates $(w_1,\theta)$ defined by
\begin{align}\label{eq:x2w1}
x_2=w_1^{-1} \e^{i\theta}, \quad w_1\ge 0, \quad \theta \in \mathbb T^1,
\end{align}
with $\mathbb T^1=\mathbb R/(2\pi\mathbb Z)$. For the flow to be well-defined in the $(w_1,\theta)$-coordinates, we use a desingularization corresponding to division of the right hand side by $w_1^{1-\kappa}=\vert x_2\vert^{\kappa-1}$. This gives a complete flow on $\mathbb H^2$. In particular, \eqref{realtime} and \eqref{imagtime} are both complete with respect to $s_{P}$ ($P$ for Poincar\'e) defined by
\begin{align*}
ds_P = (1+\vert x_2\vert^{\kappa-1})ds.
\end{align*}
%In the $(w_1,\theta)$-coordinates, any object $\breve \gamma$ will be denote by $\gamma_1$.  %For a neighborhood of $\mathbb S$, we use the $(w_1,\theta)$-coordinates.
{We suppose throughout that \assumptionref{ass1} holds}. %The roots .

\begin{remark}
 The recent reference \cite{fiedler2025a} studies general scalar polynomial systems (without \assumptionref{ass1}) in complex time. Part of our approach to  study \eqref{realtime} and \eqref{imagtime} is inspired by this reference.
\end{remark}

\begin{lemma}\label{lem:realtime}
 Consider \eqref{realtime}. Then the following holds.
 \begin{enumerate}
 \item \label{it:real1} Conjugation $x\mapsto \overline x$ defines a symmetry of \eqref{realtime}.
 \item \label{it:real2} $q^j$ with $j$ odd (even) is a hyperbolic stable (unstable, respectively) node of \eqref{realtime} for all $j\in \{1,\ldots,\kappa\}$.
 \item \label{it:real4}
 There are no cycles of \eqref{realtime}.
  \item \label{it:real3} There are $2(\kappa-1)$-many hyperbolic saddles $\breve e^l\in \mathbb S^1$, $l\in \{1,\ldots,2(\kappa-1)\}$, of the Poincar\'e compactification of \eqref{realtime} on $\mathbb H$ given by
 \begin{align*}
   \breve e^l:\quad w_1= 0,\quad \theta=\frac{\pi}{\kappa-1}(l-1),
 \end{align*}
 in the $(w_1,\theta)$-coordinates.
 \item \label{it:real5} The stable (unstable) manifold of $\breve e^{l}$ with $l$ odd (even, respectively) is a subset of $\mathbb S^1$.
 \item \label{it:real6} There are $2(\kappa-1)$-many heteroclinic connections $\breve{\mathcal E}^l\subset \operatorname{int}\mathbb H^2$, $l\in \{1,\ldots,2(\kappa-1)\}$, given by the regular trajectories
 \begin{align}\label{eq:Wgl}
  \mathbf{W}^{\chi^l}(\breve e^l) \setminus  \{\breve e^l \}\quad \mbox{where}\quad \begin{cases} \chi^l=u&\mbox{ if $l$ is odd},\\
   \chi^l=s &\mbox{ if $l$ is even}.
  \end{cases}
 \end{align}
that connect $\breve e^l$ with
 \begin{align*}
  \begin{cases}
    \breve q^{l}& \mbox{if $l\le \kappa$}\\
    \breve q^{2\kappa-l} & \mbox{if $\kappa<l\le 2(\kappa-1)$},
  \end{cases}
   \end{align*}
   in the sense of $\alpha$- and $\omega$-limit sets; here $\breve q^j\in \operatorname{int}\mathbb H^2$, $j\in \{1,\ldots,\kappa\}$, denote the roots of $Q$ on the Poincar\'e hemisphere $\mathbb H^2$.
 \item \label{it:real8} In a neighborhood of $\breve e^l$, $\mathbf{W}_{loc}^{\chi^l}(\breve e^l)$, with $\chi^l$ as in \eqref{Wgl}, takes the graph form:
 \begin{align}
  \theta = E^l(w_1),\quad w_1\in [0,\xi],\label{eq:localWgl}
 \end{align}
 in the $(w_1,\theta)$-coordinates, see \eqref{x2w1},
with $\xi>0$ small enough and where $ E^l$ is real-analytic with $E^l(0)=\frac{\pi}{\kappa-1}(l-1)$. Moreover, in such a neighborhood there is a $w_1$-fibered $C^\infty$-diffeomorphism $(w_1,\theta)\mapsto (w_1,\varphi)$ so that
\begin{equation}\label{eq:w1normalform}
\begin{aligned}
 \frac{dw_1}{ds_1} &= (-1)^{l-1} w_1^{2-\kappa},\\
 \frac{d \varphi }{ds_1} &= w_1^{1-\kappa} ( (-1)^l (\kappa-1) +G^l(w_1^{\kappa-1} \varphi))\varphi,
\end{aligned}
\end{equation}
where $G^l(0)=0$ and
\begin{align}
 \frac{ds_1}{ds} = 1+\mathcal O(w_1).\label{eq:s1time}
\end{align}
 \item \label{it:real7} $\mathcal E^1,\mathcal E^\kappa\subset \mathbb R$ and $\overline{\mathcal E^{2\kappa-l}}={\mathcal E^{l}}$ for all $l\in \{2,\ldots \kappa-1\}$.
  \end{enumerate}
%  Here
% Moreover, there are $\kappa$ heteroclinic connections $\mathcal E^j\subset \operatorname{int}(\mathbb D)$, $j\in \{1,\ldots,\kappa\}$, between $(0,E_j)$ and $(0,E_{2(\kappa-1)-j})$.
\end{lemma}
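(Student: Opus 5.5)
The plan is to reduce everything to the holomorphic structure of \eqref{realtime} together with an explicit computation of the Poincar\'e compactification near $\mathbb S^1$. Items \ref{it:real1} and \ref{it:real2} are immediate. Since $Q$ has real coefficients, $\overline{Q(x_2)}=Q(\overline{x_2})$, which gives the conjugation symmetry. At a simple root $q^j$ the linearization of the real planar field $(\operatorname{Re},\operatorname{Im})$ associated with $x_2\mapsto Q(x_2)$ is multiplication by the real number $Q'(q^j)$. This is a node, stable if $Q'(q^j)<0$ and unstable if $Q'(q^j)>0$. Because $Q(f)\sim -f^\kappa$ as $f\to+\infty$ and the roots are simple, $Q'(q^1)<0$, and the signs of $Q'(q^j)$ alternate along $q^\kappa<\cdots<q^1$.

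For item \ref{it:real4} I would use the complex time function $\Phi(x_2)=\int^{x_2} \frac{d\xi}{Q(\xi)}$, which is locally single valued away from the roots and satisfies $\Phi(x_2(s))=s+\mathrm{const}$ along solutions. Suppose $\gamma$ is a cycle of real period $T>0$. Then $\gamma$ avoids the roots, so $T=\oint_\gamma d\xi/Q(\xi)=2\pi i\, n\sum_{j\in J}1/Q'(q^j)$ for some winding number $n$ and some set $J$ of enclosed roots. This quantity is purely imaginary because each $Q'(q^j)\in\mathbb R$, which contradicts $T>0$. The same argument excludes orbits in $\operatorname{int}\mathbb H^2$ that connect two points of $\mathbb S^1$. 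Such an orbit reaches infinity in finite real time in both directions, since $\int^\infty d\xi/Q$ converges for $\kappa\ge2$. Its total real transit time would then equal a contour integral that can be closed near infinity, where the arc contributes $o(1)$. That integral is again $2\pi i$ times a sum of real residues, so it cannot be a positive real number.

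Items \ref{it:real3} and \ref{it:real5} come from the computation in the coordinates \eqref{x2w1}. Write $Q(x_2)=-x_2^\kappa(1+\mathcal O(w_1))$ and separate real and imaginary parts after dividing by $w_1^{1-\kappa}$. This yields
$$
\frac{dw_1}{ds_P}=w_1\bigl(\cos((\kappa-1)\theta)+\mathcal O(w_1)\bigr),\qquad
\frac{d\theta}{ds_P}=-\sin((\kappa-1)\theta)+\mathcal O(w_1).
$$
The equilibria on $w_1=0$ are therefore $\theta=\pi(l-1)/(\kappa-1)$, with eigenvalues $(-1)^{l-1}$ in the $w_1$-direction and $(-1)^{l}(\kappa-1)$ in the $\theta$-direction. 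These are hyperbolic saddles. For $l$ odd the stable eigendirection is tangent to $\mathbb S^1$, and since $\mathbb S^1$ is invariant the stable manifold is contained in it; for $l$ even the same holds for the unstable manifold. For item \ref{it:real8}, the other manifold $\mathbf W^{\chi^l}(\breve e^l)$ is real-analytic and transverse to $\mathbb S^1$ by the analytic stable manifold theorem, which gives the graph $\theta=E^l(w_1)$. I would then set $\varphi=\theta-E^l(w_1)$ to first order and reparametrize time so that the $w_1$-equation becomes exactly $(-1)^{l-1}w_1^{2-\kappa}$, which is possible since that equation is a nonvanishing multiple of $w_1$. The remaining $\varphi$-equation is linear in $\varphi$ to leading order. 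Its residual nonlinear dependence can be put in the fibered form $G^l(w_1^{\kappa-1}\varphi)$ by a $C^\infty$ fibered conjugacy via a Sternberg/Takens-type smooth normal form, and \eqref{s1time} follows from the time rescaling. I expect this normal form, and in particular the precise form of $G^l$, to be the fiddliest step; I would first try to obtain it by an explicit rescaling using the undesingularized exact equations rather than by abstract linearization theorems.

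For item \ref{it:real6}, the interior branch of $\mathbf W^{\chi^l}(\breve e^l)$ has, by Poincar\'e--Bendixson on $\mathbb H^2$ and the absence of cycles and of interior connections between points of $\mathbb S^1$ established above, a limit set equal to one of the nodes $\breve q^j$. To identify which node, I would use that the $2(\kappa-1)$ separatrices $\breve{\mathcal E}^l$, alternately incoming to and outgoing from infinity, divide $\mathbb H^2$ into the basins of the $\kappa$ nodes, and that these basins alternate between stable and unstable nodes. Conjugation symmetry and invariance of $\mathbb R$ fix $\mathcal E^1=(q^1,\infty)$ and $\mathcal E^\kappa=(-\infty,q^\kappa)$, which gives item \ref{it:real7} as well. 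A Jordan-curve ordering argument going around the circle then forces the stated assignment $l\mapsto q^l$ for $l\le\kappa$ and $l\mapsto q^{2\kappa-l}$ otherwise.
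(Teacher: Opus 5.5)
Your proposal is correct. For items \ref{it:real1}, \ref{it:real2}, \ref{it:real3}, \ref{it:real5} and \ref{it:real8} it follows the paper's argument:
\begin{itemize}
\item the same $(w_1,\theta)$ computation (your sign $d\theta/ds_P=-\sin((\kappa-1)\theta)$ on $\mathbb S^1$ is the one consistent with the eigenvalues $(-1)^{l-1}$, $(-1)^l(\kappa-1)$; the paper's intermediate display has a sign slip there);
\item the analytic invariant manifold written as a graph over $w_1$;
\item division by the nonvanishing factor so that the $w_1$-equation becomes exact;
\item a $C^\infty$ normal form for a saddle with resonant integer ratio $\kappa-1$, which the paper takes from \cite[Theorem 2.15]{dumortier2006a}.
\end{itemize}

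You genuinely differ in items \ref{it:real4} and \ref{it:real6}. The paper excludes cycles topologically: every interior equilibrium lies on the invariant real axis, so a cycle would have to surround one and hence cross $\mathbb R$. You instead note that a period would equal $\oint_\gamma d\xi/Q(\xi)=2\pi i\sum_j \operatorname{ind}_\gamma(q^j)/Q'(q^j)\in i\mathbb R$, which cannot be a positive real number. The paper's argument is shorter, but yours does more work. The same residue computation applies to an orbit that leaves and re-enters infinity in finite time, closed by an arc contributing $\mathcal O(R^{1-\kappa})$. This rules out interior connections between saddles at infinity, a point the paper's one-line appeal to Poincar\'e--Bendixson leaves implicit. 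It is the same mechanism the paper later uses for $T^j$.

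Your non-crossing argument also makes the paper's ``simple phase portrait analysis'' precise. For $l=1,\ldots,\kappa$, the index of the root attached to $\breve e^l$ has the parity of $l$ (stable versus unstable node). It is non-decreasing in $l$ because the separatrices cannot cross in the upper half-plane, so it is strictly increasing. Since it equals $1$ at $l=1$ and $\kappa$ at $l=\kappa$, it must equal $l$, and conjugation handles $l>\kappa$.

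The one detail to add is that, when applying Poincar\'e--Bendixson, you must also exclude a limiting graphic built from arcs of $\mathbb S^1$. This is immediate: the flow on $\mathbb S^1$ runs alternately into the odd-indexed $\breve e^l$, so those arcs are not coherently oriented, and nodes cannot lie on such a graphic.
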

\begin{proof}
 Assertion \ref{it:real1} is obvious since ${Q}$ is a real polynomial. Assertion \ref{it:real2} follows from the fact that
 \begin{align*}
  {Q}'(q^j) {Q}'(q^{j+1})<0,
 \end{align*}
 for all $j\in \{1,\ldots,\kappa-1\}$,
with ${Q}'(q^1)<0$, which is a consequence of the roots being simple roots. Assertion \ref{it:real4}, follows from the fact that all equilibria in $\operatorname{int}\mathbb H^2$ are contained within the invariant set defined by $x_2\in \mathbb R$ (cf. \assumptionref{ass1}). For assertion \ref{it:real3}, we use the $(w_1,\theta)$-coordinates:
\begin{equation}\label{eq:w1theta}
\begin{aligned}
 \dot w_1 &=-w_1  \left(-\cos((\kappa-1)\theta)+\sum_{\alpha=0}^{\kappa-1} a_\alpha w_1^{\kappa-\alpha}\cos ((\alpha-1) \theta)\right),\\
 \dot \theta &= -\sin((\kappa-1)\theta)+\sum_{\alpha=0}^{\kappa-1} a_\alpha w_1^{\kappa-\alpha}\sin ((\alpha-1) \theta).
\end{aligned}
\end{equation}
Here we have, as advertised above, used the desingularization corresponding to division of the right hand side by $w_1^{1-\kappa}$. Now, $w_1=0$ corresponds to $\mathbb S^1$ and here we find
\begin{align*}
 \dot \theta = \sin((\kappa-1)\theta),
\end{align*}
with hyperbolic equilibria $\theta = \frac{\pi}{\kappa-1}(l-1)$, $l\in \{1,\ldots,2(\kappa-1)\}$. This defines $\breve e^l$. Linearization gives the eigenvalues
\begin{align*}
(-1)^{l-1},\quad (\kappa-1) (-1)^{l},
\end{align*}
with associated eigenvectors $(1,0)^{\operatorname{T}}$ and $(0,1)^{\operatorname{T}}$, respectively. This also proves assertion \ref{it:real5} and the expansion \eqref{localWgl} of $\mathbf{W}_{loc}^{\chi^l}(\breve e^l)$ in assertion \ref{it:real8}.

Assertion \ref{it:real6} follows from a simple phase portrait analysis based upon Poincar\'e-Bendixson. Indeed, $l=1$ and $l=\kappa$ are direct consequences of $x\in \mathbb R$ being an invariant set. The cases $2<l<\kappa-1$ and $\kappa+1<l<2(\kappa-1)$ are related by complex conjugation within $\operatorname{Im}x\gtrless 0$, respectively. The fact that there are no cycles and no equilibria for $x\notin \mathbb R$ (cf. \assumptionref{ass1}) gives the result.

The normal form \eqref{w1normalform} follows from applying \cite[Theorem 2.15]{dumortier2006a} to the system \eqref{w1theta} after dividing the right hand side by
\begin{align*}
 \frac{ds_1}{ds}=(-1)^l \left(-\cos((\kappa-1)\theta)+\sum_{\alpha=0}^{\kappa-1} a_\alpha w_1^{\kappa-\alpha}\cos ((\alpha-1) \theta)\right).
\end{align*}
Finally, assertion \ref{it:real7} follows from the symmetry in assertion \ref{it:real1}.
\end{proof}

We illustrate the phase portraits of \eqref{realtime} in Fig. \ref{fig:realtime} on the Poincar\'e hemisphere (illustrated as a disc) for $\kappa=2$ and $\kappa=3$ under \assumptionref{ass1}.

	\begin{figure*}[t!]
		\centering
		\subfigure[]{\includegraphics[width=0.47\textwidth]{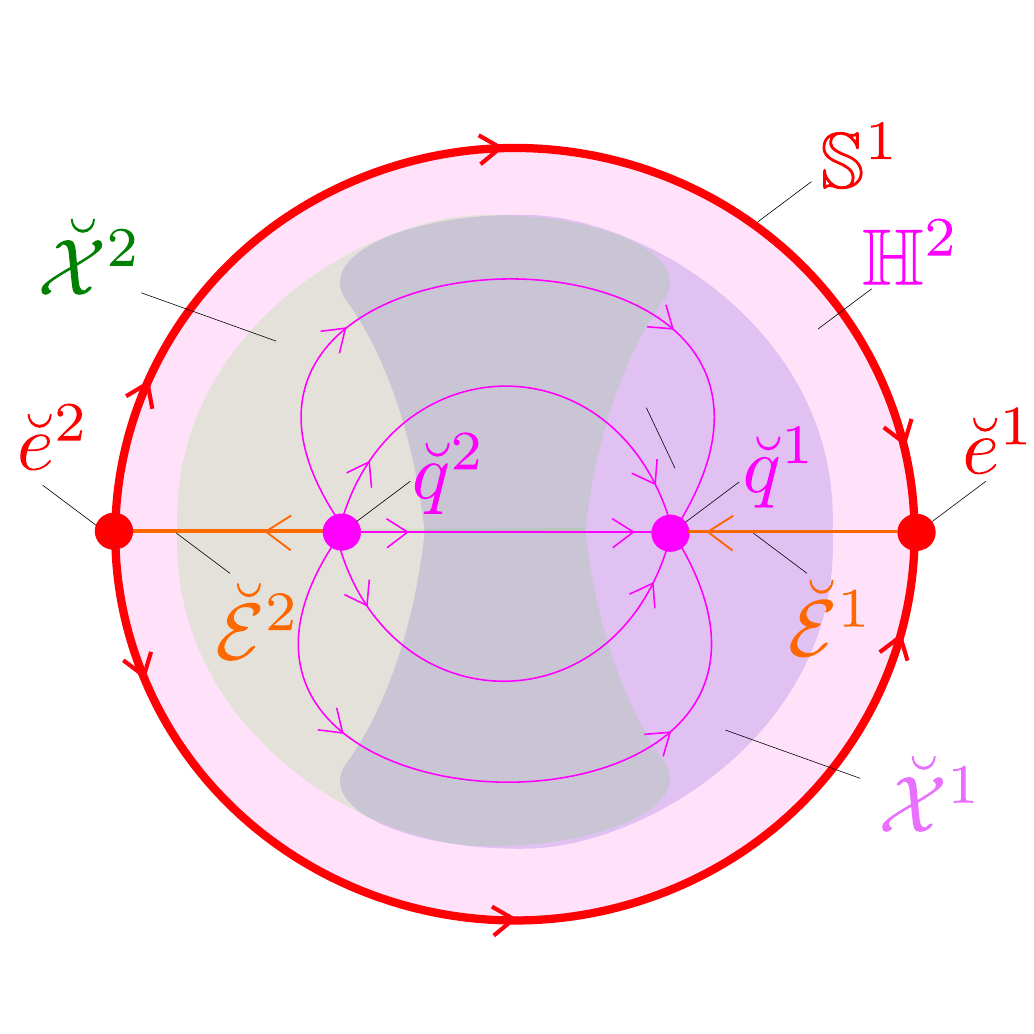}}
		\subfigure[]{\includegraphics[width=0.47\textwidth]{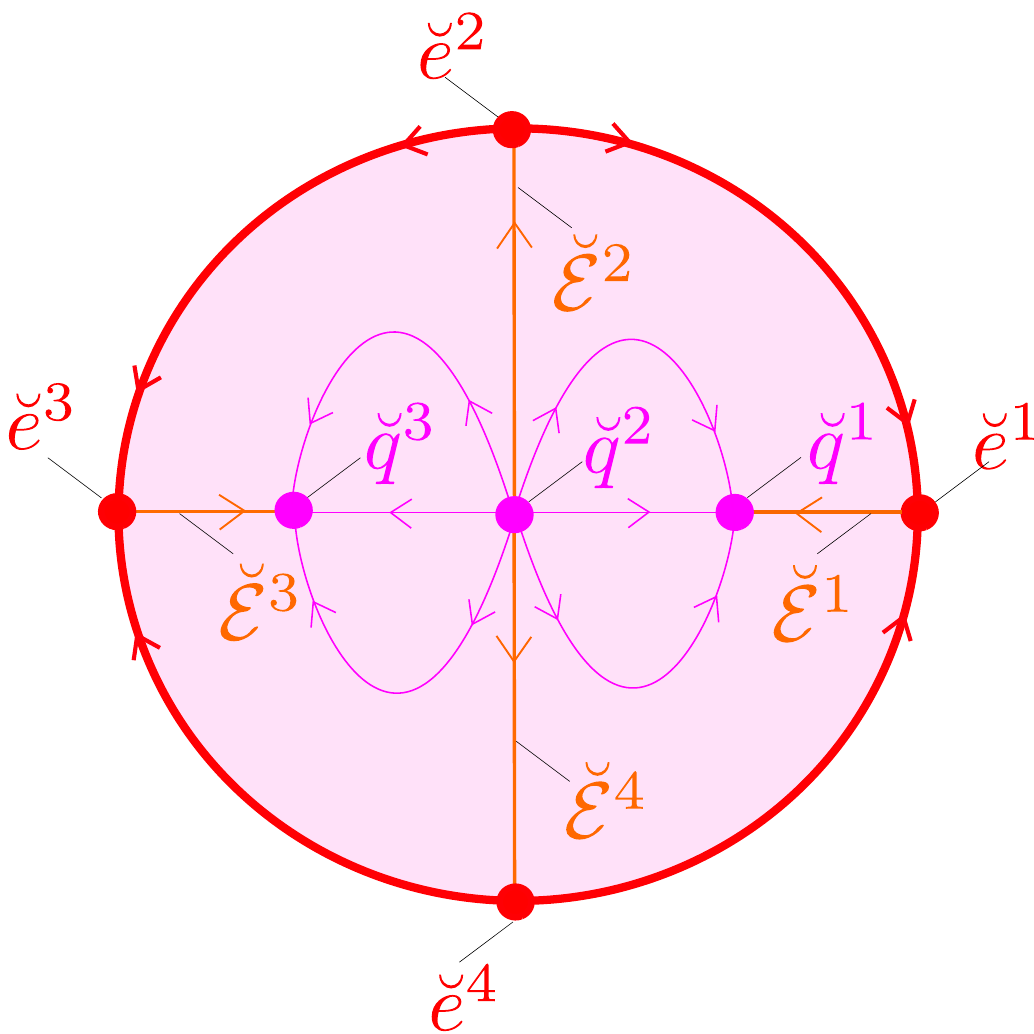}}
		\caption{Phase portraits of $x_2'=Q(x_2)$ on the Poincar\'e hemisphere $\mathbb H^2$ for $\kappa=2$ (a) and $\kappa=3$ (b). The points $q^3<q^2<q^1$ denote the roots of $Q$.  The sets $\breve {\mathcal X}^j$ (green and purple) in (a) are compact neighborhoods of $\breve q^j$ within the basins of attraction for the forward/backward flow of $x_2'=Q(x_2)$. These sets are relevant for Proposition \ref{prop:Wuspj2}. }
		\label{fig:realtime}
	\end{figure*}
\begin{lemma}\label{lem:imagtime}
 Consider \eqref{imagtime}. Then the following holds true.
 \begin{enumerate}
  \item \label{it:imag1} Conjugation and reversing time $(x,s)\mapsto (\overline x,-s)$ defines a time-reversal symmetry of \eqref{imagtime}.
 \item \label{it:imag2} All $q^j$, $j\in \{1,\ldots,\kappa\}$, are centers of \eqref{imagtime} and there are no limit cycles.
 \item \label{it:imag3} There are $2(\kappa-1)$-many hyperbolic saddles $\breve h^l\in \mathbb S^1$, $l\in \{1,\ldots,2(\kappa-1)\}$,  of the Poincar\'e compactification of \eqref{imagtime} on $\mathbb H$ defined by
 \begin{align*}
   \breve h^l:\quad w_1= 0,\quad \theta=\frac{\pi}{\kappa-1}(l-1)+\frac{\pi}{2(\kappa-1)},
 \end{align*}
 in the $(w_1,\theta)$-coordinates.
 \item \label{it:imag4}
 The stable (unstable) manifold $\mathbf{W}^s$ ($\mathbf{W}^u$) of $\breve h^{l}$ with $l$ even (odd, respectively) is a subset of $\mathbb S^1$.
 \item \label{it:imag5} There are $(\kappa-1)$-many heteroclinic connections $\breve{\mathcal H}^l\subset \operatorname{int}\mathbb H^2$, given by the regular trajectories
 \begin{align}\label{eq:Wgj}
 \mathbf{W}^{\chi^l}(\breve h^l)\setminus \{\breve h^l\} \quad \mbox{where}\quad
  \begin{cases}
   \chi^l = s & \mbox{ if $l$ is odd},\\
\chi^l = u & \mbox{ if $l$ is even},
  \end{cases}
 \end{align} that connect $\breve h^l\in \mathbb S^1$ with
 \begin{align}\label{eq:conjhl}
 \overline{\breve h^l}=\breve h^{2\kappa-1-l}\in \mathbb S^1,\quad \forall\,l\in \{1,\ldots,\kappa-1\},
 \end{align}
 in the sense of $\alpha$- and $\omega$-limit sets.
 \item \label{it:imag8}  In a neighborhood of $\breve h^l$, the invariant manifold $\mathbf{W}^{\chi^l}(\breve h^l)$, with $\chi^l$ as in \eqref{Wgj}, takes the graph form:
 \begin{align*}
  \theta =  H^l(w_1),\quad w_1\in [0,\xi],
 \end{align*}
 in the $(w_1,\theta)$-coordinates, see \eqref{x2w1},
with $\xi>0$ small enough and $H^l$ real-analytic with $H^l(0) = \frac{\pi}{\kappa-1}(l-1)+\frac{\pi}{2(\kappa-1)}$. Moreover, in such a neighborhood there is a $w_1$-fibered $C^\infty$ diffeomorphism $(w_1,\theta)\mapsto (w_1,\varphi)$ so that
\begin{equation}\label{eq:w1normalform2}
\begin{aligned}
 \frac{dw_1}{ds_1} &= (-1)^{l} w_1^{2-\kappa},\\
 \frac{d \varphi }{ds_1} &= w_1^{1-\kappa} ( (-1)^{l-1} (\kappa-1) +G^l(w_1^{\kappa-1} \varphi))\varphi,
\end{aligned}
\end{equation}
where $G^l(0)=0$ and
\begin{align*}
 \frac{ds_1}{ds} = 1+\mathcal O(w_1).
\end{align*}
 \item \label{it:imag6} $\overline{\mathcal H^j}=\mathcal H^j$ for all $j\in \{1,\ldots,\kappa-1\}$ and  the points $p^l :=\mathcal H^l\cap \{\operatorname{Im}(x_2)=0\}$, $l\in \{1,\ldots,\kappa-1\}$, satisfy $p^l \in (q^{l+1},q^l)$.
 %  On the other hand, the unstable (stable) manifold of $H^j$ with $j$ odd (even, respectively) is asymptotic as $t\rightarrow +\infty $ ($-\infty$, respectively) to
%  \begin{align*}
%   \begin{cases}
%     x_2 = e_{j}& \mbox{if $j\le \kappa$}\\
%     x_2 =e_{2\kappa-j} & \mbox{if $\kappa<j\le 2(\kappa-1)$}.
%   \end{cases}
%  \end{align*}
\end{enumerate}
\end{lemma}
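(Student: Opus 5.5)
The plan is to mirror the proof of Lemma \ref{lem:realtime}, using the structure of \eqref{imagtime} as a Hamiltonian-type (conservative) flow. Assertion \ref{it:imag1} is immediate: if $x_2(s)$ solves \eqref{imagtime}, then $y(s):=\overline{x_2(-s)}$ satisfies $y'=-\overline{iQ(x_2)}=i\overline{Q(x_2)}=iQ(y)$, since $Q$ is real.

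For assertion \ref{it:imag2}, the key observation is that on $\mathbb C\setminus\{q^j\}$ the flow of \eqref{imagtime} is orthogonal to that of \eqref{realtime}. Introduce the multivalued function $\Phi(x_2)=\int^{x_2}\frac{d\xi}{Q(\xi)}=\sum_j \frac{\log(\xi-q^j)}{Q'(q^j)}$, which is well defined by partial fractions since all roots are simple. Along \eqref{imagtime} we have $\frac{d}{ds}\Phi=i$, so $\operatorname{Re}\Phi$ is a conserved quantity. Because every residue $1/Q'(q^j)$ is real, $\operatorname{Re}\Phi=\sum_j \frac{\log|x_2-q^j|}{Q'(q^j)}$ is single valued. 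It therefore gives a real-analytic first integral on $\mathbb C\setminus\{q^j\}$ whose level sets near $q^j$ are small near-circles. Indeed, the linearization at $q^j$ has eigenvalues $\pm iQ'(q^j)$ (as a map on $\mathbb R^2$), and the first integral with a nondegenerate extremum $\log|x_2-q^j|/Q'(q^j)\to\mp\infty$ promotes the linear center to a nonlinear center. The existence of a first integral also rules out limit cycles.

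For assertions \ref{it:imag3}, \ref{it:imag4} and \ref{it:imag8}, I would redo the $(w_1,\theta)$-computation \eqref{w1theta}. Multiplication by $i$ is equivalent to replacing $\theta$ by $\theta-\frac{\pi}{2(\kappa-1)}$ in the leading terms: at $w_1=0$ one gets $\dot\theta=\pm\cos((\kappa-1)\theta)$, whose zeros are $\theta=\frac{\pi}{\kappa-1}(l-1)+\frac{\pi}{2(\kappa-1)}$. From this, compute the eigenvalues $(-1)^l$ and $(\kappa-1)(-1)^{l-1}$, and then apply \cite[Theorem 2.15]{dumortier2006a} exactly as before to obtain the normal form \eqref{w1normalform2} and the analytic graph $H^l$.

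Assertions \ref{it:imag5} and \ref{it:imag6} are the main obstacle, and I would handle them with the first integral. The $2(\kappa-1)$ separatrices leaving $\mathbb S^1$ into $\operatorname{int}\mathbb H^2$ cannot end at the centers $q^j$, since those are surrounded by periodic orbits. By Poincar\'e--Bendixson and the absence of equilibria off $\mathbb R$, each separatrix must either return to $\mathbb S^1$ or cross $\mathbb R$.

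On $\mathbb R\setminus\{q^j\}$ the vector field $iQ$ is vertical, so every orbit crosses the real axis transversally. Combined with the symmetry in assertion \ref{it:imag1}, a separatrix from $\breve h^l$ that crosses $\mathbb R$ at a point $p$ is continued by its mirror image and ends at $\overline{\breve h^l}$. This yields \eqref{conjhl} and $\overline{\mathcal H^l}=\mathcal H^l$.

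It remains to show that the separatrix starting at angle $\frac{\pi}{\kappa-1}(l-1)+\frac{\pi}{2(\kappa-1)}\in(0,\pi)$, for $l\le\kappa-1$, hits $\mathbb R$ in $(q^{l+1},q^l)$, and does not instead return to a different point at infinity in the upper half-plane. The upper half-plane is invariant for the pair of flows in the sense that the orbits of \eqref{realtime} $\mathcal E^2,\dots,\mathcal E^{\kappa-1}$, together with $\mathcal E^1,\mathcal E^\kappa$ and the real segments, cut it into $\kappa-1$ regions. Region $l$ is bounded by $\mathcal E^l$, $\mathcal E^{l+1}$, the arc of $\mathbb S^1$ between $\breve e^l$ and $\breve e^{l+1}$ (which contains $\breve h^l$), and the segment $[q^{l+1},q^l]$. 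Since \eqref{realtime} and \eqref{imagtime} are orthogonal, an orbit of \eqref{imagtime} crosses each $\mathcal E^m$ transversally and in a fixed direction, given by the sign of $\operatorname{Im}\Phi$ varying monotonically. I would argue that the separatrix from $\breve h^l$ cannot leave region $l$ through $\mathcal E^l$ or $\mathcal E^{l+1}$. On these curves $\operatorname{Re}\Phi$ is strictly monotone from $\pm\infty$ at infinity and at $q$, and the level $\operatorname{Re}\Phi=c^l$ of the separatrix is the limiting value at $\breve h^l$; a careful check is needed here, and I would fall back on the local normal forms near $\breve e^l,\breve h^l$ together with a direction-field argument if it fails. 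Once leaving through those curves is excluded, the separatrix can only exit region $l$ through the open segment $(q^{l+1},q^l)$, which gives $p^l$.
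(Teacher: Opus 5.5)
Your treatment of assertions 1--4 and 8 matches the paper's: the same $(w_1,\theta)$ computation, the same eigenvalues $(-1)^l$ and $(\kappa-1)(-1)^{l-1}$, and the same appeal to the normal form theorem. The first integral $\operatorname{Re}\Phi$ is a fine way to handle assertion 2. For assertions 5 and 6 the paper only cites the time-reversal symmetry, which gives the pairing $\breve h^l\leftrightarrow\overline{\breve h^l}$ only once one knows the separatrix reaches $\mathbb R$. Your region argument is the right way to supply this, but its decisive step is not established, and the reason you sketch for it is false. You say that on $\mathcal E^l,\mathcal E^{l+1}$ the function $\operatorname{Re}\Phi$ runs monotonically between $\pm\infty$ at infinity and at $q$. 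If that were so, it would take every real value there, including the separatrix level $c^l$, and nothing would be excluded. Transversality of the two flows does not prevent crossing either.

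The missing idea is that $\operatorname{Re}\Phi$ has a \emph{common finite} limit at infinity. By \eqref{sumQkk} (Euler--Jacobi, independent of this lemma), $\operatorname{Re}\Phi(x_2)=\sum_l \frac{\log|x_2-q^l|}{Q'(q^l)}=\mathcal O(|x_2|^{-1})$, uniformly in direction. This closes the gap as follows.
\begin{enumerate}
\item Every separatrix lies on $\{\operatorname{Re}\Phi=0\}$, so $c^l=0$ for all $l$.
\item Along orbits of \eqref{realtime} one has $\frac{d}{ds}\Phi=1$. So on $\mathcal E^m$, $\operatorname{Re}\Phi$ equals the signed real time since leaving (resp. until reaching) infinity, which is finite by Corollary \ref{cor:blowuptime}. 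Hence $\operatorname{Re}\Phi>0$ on $\mathcal E^m$ for $m$ odd and $\operatorname{Re}\Phi<0$ for $m$ even, and $\mathcal H^l$ cannot meet $\mathcal E^l$ or $\mathcal E^{l+1}$. This includes the real half-lines $\mathcal E^1,\mathcal E^\kappa$, so crossing $\mathbb R$ outside $[q^\kappa,q^1]$ is excluded too.
\item $\operatorname{Re}\Phi$ extends continuously by $0$ to $\mathbb S^1$, so a return to the arc must be excluded separately. Since $1/Q=\mathcal O(x_2^{-\kappa})$ is integrable at infinity, near $\mathbb S^1$ we have $\operatorname{Re}\Phi=\frac{w_1^{\kappa-1}}{\kappa-1}\left(\cos((\kappa-1)\theta)+\mathcal O(w_1)\right)$. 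By the implicit function theorem, the zero level there is exactly the $2(\kappa-1)$ graphs $\theta=H^m(w_1)$. In region $l$ this is the separatrix's own end, so a return would make $\mathcal H^l$ periodic.
\item Since $\nabla\operatorname{Re}\Phi=\overline{1/Q}\neq 0$ off the roots, a flow-box transversal meets the level $\{\operatorname{Re}\Phi=0\}$ in a single point. Hence the orbit cannot accumulate at interior points. Since $|\operatorname{Re}\Phi|\to\infty$ at $q^l,q^{l+1}$, it cannot approach the centers either.
\end{enumerate}
It must therefore leave region $l$ through $(q^{l+1},q^l)$. Your symmetry argument then yields \eqref{conjhl}, $\overline{\mathcal H^l}=\mathcal H^l$ and $p^l\in(q^{l+1},q^l)$.
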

\begin{proof}
 Assertions \ref{it:imag1} and \ref{it:imag2} are simple calculations. For assertion \ref{it:imag3}, we use the $(w_1,\theta)$-coordinates:
\begin{align*}
 \dot w_1 &=w_1 \left(-\sin((\kappa-1)\theta)+\sum_{\alpha=0}^{\kappa-1} a_\alpha w_1^{\kappa-\alpha}\sin ((\alpha-1) \theta)\right),\\
 \dot \theta &= -\cos((\kappa-1)\theta)+\sum_{\alpha=0}^{\kappa-1} a_\alpha w_1^{\kappa-\alpha}\cos ((\alpha-1) \theta).
\end{align*}
Here we have, as advertised above, used the desingularization corresponding to division of the right hand side by $w_1^{1-\kappa}$.
Now, $w_1=0$ corresponds to $\mathbb S^1$ and here we find
\begin{align*}
 \dot \theta = -\cos((\kappa-1)\theta),
\end{align*}
with hyperbolic equilibria $\theta = \frac{\pi}{\kappa-1}(l-1)+\frac{\pi}{2(\kappa-1)}$, $l\in \{1,\ldots,2(\kappa-1)\}$. This defines $\breve h^l$. Linearization gives the eigenvalues
\begin{align*}
(-1)^{l},\quad (\kappa-1) (-1)^{l-1},
\end{align*}
with associated eigenvectors $(1,0)^{\operatorname{T}}$ and $(0,1)^{\operatorname{T}}$, respectively. This also proves the first part of assertion \ref{it:imag8}; the proof of the latter part is identical to the proof of \eqref{w1normalform} above. Next, assertions \ref{it:imag5} and \ref{it:imag6} follow from the time-reversal symmetry, see assertion \ref{it:imag1}.
\end{proof}

We illustrate the phase portraits of \eqref{imagtime} in Fig. \ref{fig:imagtime} on the Poincar\'e hemisphere (illustrated as a disc) for $\kappa=2$ and $\kappa=3$ under \assumptionref{ass1}.

As corollaries of \eqref{w1normalform}  and \eqref{w1normalform2}, we have the following:
\begin{cor}\label{cor:blowuptime}
\eqref{realtime} and \eqref{imagtime} has finite time blowup along the invariant manifolds \eqref{Wgl} and \eqref{Wgj}, respectively.
\end{cor}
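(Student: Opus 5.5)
Along the invariant manifolds \eqref{Wgl} and \eqref{Wgj} I will use the local normal forms \eqref{w1normalform} and \eqref{w1normalform2}, and compute the original time $s$ needed to reach $w_1=0$ (i.e.\ $x_2=\infty$). The argument is identical for the two systems, so I describe it for \eqref{realtime}. Fix $l$ and consider the trajectory $\breve{\mathcal E}^l=\mathbf{W}^{\chi^l}(\breve e^l)\setminus\{\breve e^l\}$. By Lemma \ref{lem:realtime} item \ref{it:real8}, near $\breve e^l$ this trajectory is the graph $\theta=E^l(w_1)$, $w_1\in(0,\xi]$. In the normal form coordinates it corresponds to the invariant set $\varphi=0$, because $\{\varphi=0\}$ is invariant for \eqref{w1normalform} and is the manifold tangent to the $w_1$-direction.

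On $\varphi=0$ the dynamics reduce to $dw_1/ds_1=(-1)^{l-1}w_1^{2-\kappa}$. For $l$ odd, $w_1$ is increasing, so the trajectory emanates from $w_1=0$ in forward time; equivalently, in backward time it reaches $w_1=0$. Integrating gives
\begin{align*}
\frac{w_1^{\kappa-1}}{\kappa-1}=(-1)^{l-1}(s_1-s_1^0),
\end{align*}
so $w_1=0$ is attained at the finite time $s_1^0$, with $\vert s_1-s_1^0\vert=\xi^{\kappa-1}/(\kappa-1)$ from $w_1=\xi$. By \eqref{s1time}, $ds/ds_1=1+\mathcal O(w_1)$ is bounded on $[0,\xi]$. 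Hence
\begin{align*}
\int_0^\xi \frac{ds}{ds_1}\,w_1^{\kappa-2}\,dw_1<\infty,
\end{align*}
which shows that the original time $s$ needed to travel from $w_1=\xi$ to $w_1=0$ (i.e.\ $\vert x_2\vert\to\infty$) is finite. The same computation with \eqref{w1normalform2} handles \eqref{imagtime}.

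One subtlety is that the diffeomorphism in item \ref{it:real8} is only $w_1$-fibered and $C^\infty$, and $s_1$ is a reparametrization of $s$. I should therefore check that the relation $ds_1/ds=1+\mathcal O(w_1)$ refers to the original (non-desingularized) time $s$ of \eqref{realtime}. The factor $w_1^{2-\kappa}$ in the normal form already encodes the undoing of the desingularization by $\vert x_2\vert^{\kappa-1}$, so this is consistent with the statement of \eqref{s1time}. As an independent check, I can work directly with $x_2$: for $\vert x_2\vert$ large,
\begin{align*}
\frac{dx_2}{ds}=-x_2^\kappa\left(1+\mathcal O(x_2^{-1})\right),
\end{align*}
so along the trajectory $ds=-dx_2/\bigl(x_2^\kappa(1+\mathcal O(x_2^{-1}))\bigr)$. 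Since $\kappa\ge2$, $\int^\infty \vert x_2\vert^{-\kappa}\,\vert dx_2\vert$ converges along a curve approaching infinity with bounded angle. The graph form $\theta=E^l(w_1)$ guarantees this, because $\vert dx_2\vert$ is comparable to $d\vert x_2\vert$ there.

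The only step needing care is confirming that the time-scaling $s\leftrightarrow s_1$ is bounded up to $w_1=0$. This follows from \eqref{s1time} and the explicit form of $ds_1/ds$ in the proof of Lemma \ref{lem:realtime}, whose leading term $(-1)^l(-\cos((\kappa-1)\theta))$ equals $1$ at $\theta=\frac{\pi}{\kappa-1}(l-1)$. The corresponding factor for \eqref{imagtime} is likewise nonzero at the points $\breve h^l$.
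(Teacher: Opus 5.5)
Your proof is correct and follows the paper's argument: restrict \eqref{w1normalform} (resp.\ \eqref{w1normalform2}) to the invariant set $\varphi=0$, note that $w_1$ reaches $0$ in finite $s_1$-time because $\kappa\ge 2$, and transfer this to $s$ via $ds_1/ds=1+\mathcal O(w_1)$. The paper uses the differential inequality $dw_1/ds_1\le-\mu^{2-\kappa}$ on $[0,\mu]$ where you integrate explicitly, and your extra direct estimate $\int^\infty \vert x_2\vert^{-\kappa}\,\vert dx_2\vert<\infty$ along the graph $\theta=E^l(w_1)$ is also valid and gives a proof that does not need the normal form.
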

\begin{proof}
 We focus on \eqref{realtime} and \eqref{w1normalform}. Notice in particular that \eqref{realtime} has finite time blow-up along \eqref{Wgl} with respect to $s$ if and only if \eqref{w1normalform} has a finite time singularity $w_1(s_1)\rightarrow 0$ as $s_1\rightarrow \omega$ within $\varphi=0$, cf.  \eqref{x2w1} and \eqref{s1time}. We therefore consider
 \begin{align*}
  \frac{dw_1}{ds_1} = (-1)^{l-1} w_1^{2-\kappa}.
 \end{align*}
  We focus on $l$ even (for $l$ odd, we consider the backward time). Since $\kappa\ge 2$, we have that
  \begin{align*}
   \frac{dw_1}{ds_1}<-\mu^{2-\kappa}<0,
  \end{align*}
for all $w_1\in [0,\mu]$. Hence for any initial condition $w_1(0)\in [0,\mu]$, there is an $\omega>0$ so that $w_1(s_1)\rightarrow 0^+$ for $s_1\rightarrow \omega^-$.

\end{proof}

	\begin{figure*}[t!]
		\centering
		\subfigure[]{\includegraphics[width=0.47\textwidth]{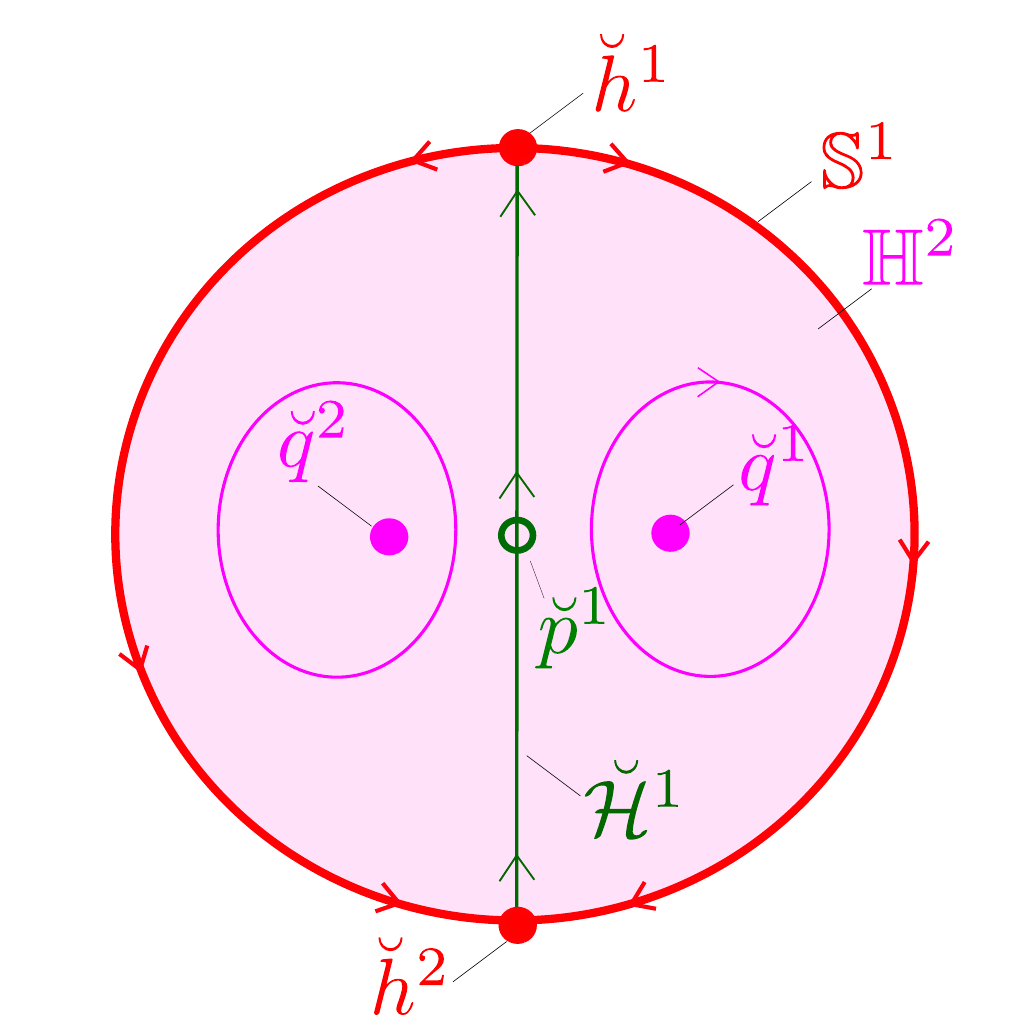}}
		\subfigure[]{\includegraphics[width=0.47\textwidth]{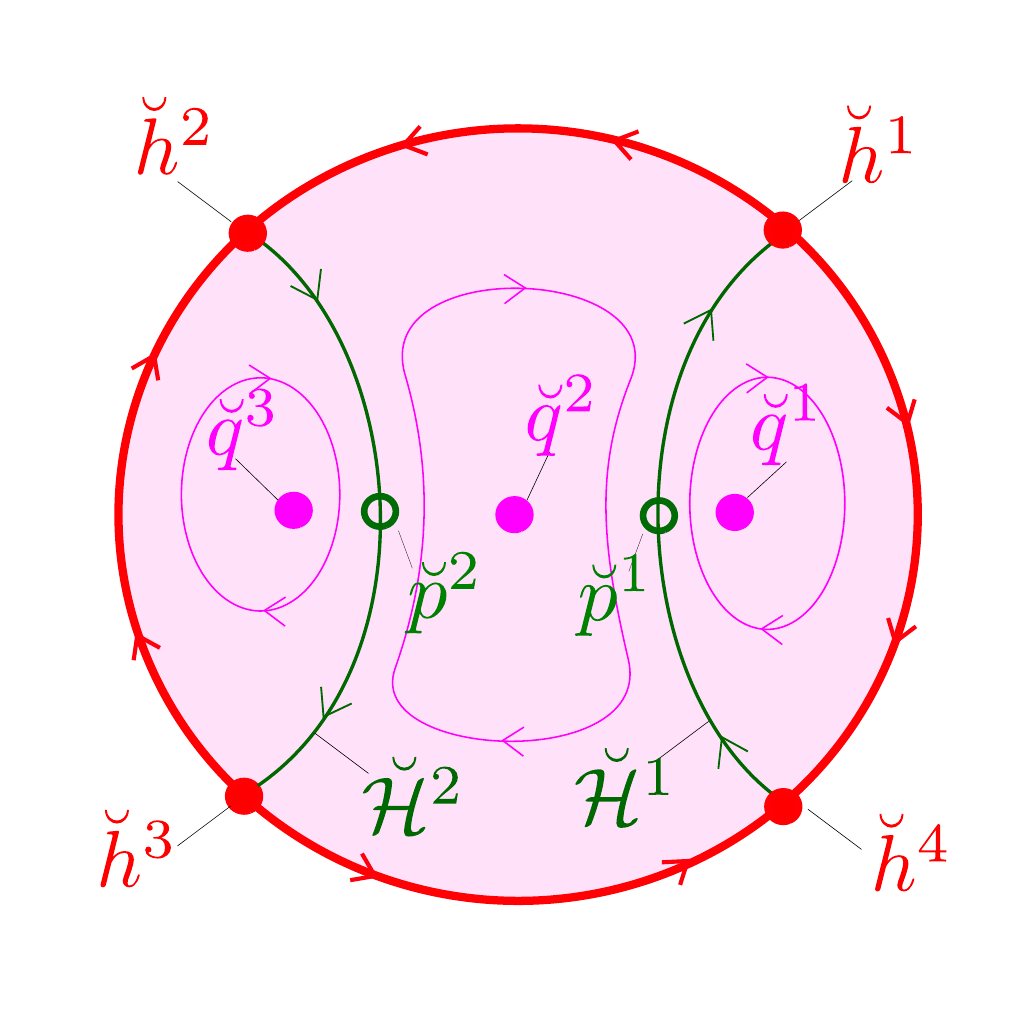}}
		\caption{Phase portraits of $x_2'=iQ(x_2)$ on the Poincar\'e hemisphere $\mathbb H^2$ for $\kappa=2$ (a) and $\kappa=3$ (b). The points $p^2<p^1$ define the sections $\{x_2=p^j\}$ where the separation of the invariant manifolds are measured.}
		\label{fig:imagtime}
	\end{figure*}

	Let $p^j:=\mathcal H^j \cap \{\operatorname{Im}(x_2)=0\}$, recall assertion \ref{it:imag6} of Lemma \ref{lem:imagtime}, and denote by $T^j>0$ the finite blowup time  along $\mathcal H^j$ for $x_2'(s)=iQ(x_2(s))$, $x_2(0)=p^j$, $s\ge 0$. Here $j\in \{1,\ldots,\kappa-1\}$. $T^j$ is well-defined by Corollary \ref{cor:blowuptime}: $T^j\in (0,\infty)$.
	\begin{lemma}
	 The following holds true
	 \begin{align}
	  \sum_{l=1}^{\kappa} \frac{1}{{Q}'(q^l)}=0,\label{eq:sumQkk}
	 \end{align}
and for every $j\in \{1,\ldots,\kappa-1\}$:
\begin{align}
 \sum_{l=1}^{j} \frac{1}{{Q}'(q^l)} =\frac{T^j (-1)^j}{\pi}.\label{eq:sumQkj}
\end{align}

	\end{lemma}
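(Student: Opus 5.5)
The first identity \eqref{sumQkk} I will get from partial fractions and the residue theorem. Since the roots are simple,
\[
\frac{1}{Q(f)}=\sum_{l=1}^{\kappa}\frac{1}{Q'(q^l)(f-q^l)}.
\]
Because $\deg Q=\kappa\ge 2$, the left-hand side is $\mathcal O(|f|^{-2})$ as $f\to\infty$. On the other hand, the right-hand side behaves like $\big(\sum_l 1/Q'(q^l)\big)f^{-1}$. Comparing these forces the sum to vanish. Equivalently, the integral of $1/Q$ over a large circle tends to zero, and it also equals $2\pi i$ times the sum of the residues.

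For \eqref{sumQkj}, the plan is to express $T^j$ as a contour integral along $\mathcal H^j$. Along a solution of \eqref{imagtime} we have $ds=dx_2/(iQ(x_2))$. Hence
\[
T^j=\int_{\gamma^j_+}\frac{dx_2}{iQ(x_2)},
\]
where $\gamma^j_+$ is the half of $\mathcal H^j$ running from $p^j$ to infinity in forward time. By the symmetry in assertion \ref{it:imag1} of Lemma \ref{lem:imagtime}, the backward half $\gamma^j_-$ is the conjugate of $\gamma^j_+$, and its blowup time in backward time is also $T^j$. Therefore
\[
2T^j=\int_{\mathcal H^j}\frac{dx_2}{iQ(x_2)},
\]
with $\mathcal H^j$ oriented by the flow from $\breve h^{2\kappa-1-j}$ to $\breve h^j$. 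These integrals converge at infinity because $1/Q=\mathcal O(|x_2|^{-\kappa})$, which is consistent with Corollary \ref{cor:blowuptime}.

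Next I close $\mathcal H^j$ into a contour. I join its two ends by a large arc $|x_2|=R$ running from the direction $\theta=H^j(0)=\frac{\pi}{\kappa-1}(j-\tfrac12)$ to its conjugate direction. On that arc the integral is $\mathcal O(R^{1-\kappa})\to 0$. The closed curve encloses exactly those roots $q^l$ lying on one side of $p^j$. By assertion \ref{it:imag6} of Lemma \ref{lem:imagtime}, $p^j\in(q^{j+1},q^j)$, and I expect $\mathcal H^j$ to separate $\{q^1,\dots,q^j\}$ from $\{q^{j+1},\dots,q^\kappa\}$. Using the sector of directions bounded by the endpoints of $\mathcal H^j$, the roots $q^1,\dots,q^j$ are the ones enclosed. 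The residue theorem then gives
\[
2T^j=\pm\frac{2\pi i}{i}\sum_{l=1}^j\frac{1}{Q'(q^l)},
\]
which is the right form of \eqref{sumQkj}. Identity \eqref{sumQkk} also shows that choosing the complementary region only flips the sign.

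The main obstacle is the bookkeeping of orientation and sign, that is, establishing that the correct factor is exactly $(-1)^j$. I would settle this in one of two ways. The first is to check the sign directly: on $\mathcal H^j$ near $p^j$ the velocity is $iQ(p^j)$, and $\operatorname{sign}Q(p^j)=(-1)^j$ on $(q^{j+1},q^j)$, because $Q<0$ for $x>q^1$ and $Q$ changes sign at each simple root. So $\mathcal H^j$ crosses the real axis upward for $j$ even and downward for $j$ odd. This determines whether the enclosed root set $\{q^1,\dots,q^j\}$ is traversed counterclockwise or clockwise, and hence the sign. The second, as a cross-check, is to note that $\operatorname{sign}Q'(q^l)=(-1)^l$. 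Positivity of $T^j$ then requires the sign of $\sum_{l\le j}1/Q'(q^l)$ to be $(-1)^j$, which matches the claim. I would still verify this sign property directly, for example by induction using the interlacing of the roots, or simply take it from the contour computation.

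The separation property of $\mathcal H^j$ itself follows from the phase portrait in Lemma \ref{lem:imagtime}. $\mathcal H^j$ is a symmetric curve crossing $\mathbb R$ only once, at $p^j$. This uses assertion \ref{it:imag6}, together with the fact that the real axis between consecutive roots is crossed transversally, since there $iQ$ is purely imaginary and nonzero.
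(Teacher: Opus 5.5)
Your route is the paper's. You get the first identity from residues on a large circle, and the second from $ds=dx_2/(iQ(x_2))$ along $\mathcal H^j$, the time-reversal symmetry (giving $2T^j$), closure by a large arc on the $\operatorname{Re}(x_2)>0$ side, and the residue theorem. The first identity is fine. The gap is the one step that carries the factor $(-1)^j$: you flag it as the crux, and then your concrete sign facts are backwards.

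\textbf{The sign error.} Since $Q<0$ for $x_2>q^1$ and $Q$ changes sign at each simple root, $Q>0$ on $(q^2,q^1)$. Hence $\operatorname{sign}Q(p^j)=(-1)^{j+1}$, not $(-1)^j$. For example, $Q=1-x_2^2$ has $p^1=0$ and $Q(0)=1>0$. So $\mathcal H^j$ crosses the real axis \emph{upward} for odd $j$ and \emph{downward} for even $j$.

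\textbf{The orientation error.} The flow on $\mathcal H^j$ runs from $\breve h^{2\kappa-1-j}$ to $\breve h^j$ only for odd $j$. For even $j$, $\mathcal H^j$ is the unstable manifold of $\breve h^j$ (Lemma \ref{lem:imagtime}), so the flow runs the other way. Since $\breve h^j$ lies in the upper half-plane for $j\le\kappa-1$, your orientation statement and your crossing statement contradict each other for odd $j$, and both are wrong for even $j$.

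\textbf{Consequence.} Following your recipe, for odd $j$ the closed contour goes downward at $p^j$ with $q^1,\dots,q^j$ on its left, i.e.\ counterclockwise. That would give $\sum_{l\le j}1/Q'(q^l)=+T^j/\pi$, which already contradicts $Q'(q^1)<0$ at $j=1$.

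\textbf{The correct bookkeeping.} For odd $j$, moving upward at $p^j$ with the enclosed roots on the right is clockwise. Hence $2T^j=\frac{1}{i}(-2\pi i)\sum_{l\le j}\frac{1}{Q'(q^l)}$. For even $j$ the loop is counterclockwise and the sign flips. Together this gives exactly $\sum_{l\le j}1/Q'(q^l)=(-1)^jT^j/\pi$. This is the paper's remark that $\mathcal H^j$ is positively oriented if and only if $j$ is even.

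\textbf{The cross-check does not close the gap.} Saying that positivity of $T^j$ forces $\operatorname{sign}\sum_{l\le j}1/Q'(q^l)=(-1)^j$ only restates the claim. For $j\ge 2$ you give no independent argument for the sign of these alternating partial sums, and interlacing of the roots does not obviously supply one. That sign is precisely what the contour along $\mathcal H^j$ provides, since there $dx_2/(iQ)=ds>0$. So you cannot take it \emph{from the contour computation} until the orientation has been fixed correctly as above.
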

	\begin{proof}
        The formula \eqref{sumQkk} is a simple consequence of the Euler-Jacobi formula. Here we give an independent proof, which will also form the basis of \eqref{sumQkj}: Let $\gamma_\mu \subset \mathbb C$ denote the positively oriented circle of radius $\mu>0$ centered at the origin and assume that $\mu>0$ is large enough so that $\gamma_\mu$ contains all simple roots $q^\kappa<q^{\kappa-1}<\cdots <q^1$ (by \assumptionref{ass1}) in its interior.
		 Then by the residue theorem
		 \begin{align*}
		  \int_{\gamma_\mu} \frac{1}{{Q}(x_2)}dx_2 = 2\pi i \sum_{l=1}^\kappa \frac{1}{{Q}'(q^l)},
		 \end{align*}
for all such $\gamma_\mu$. We then complete the proof of \eqref{sumQkk} by letting $\mu\rightarrow \infty$, using
\begin{align*}
 \left| \int_{\gamma_\mu} \frac{1}{{Q}(x_2)}dx_2\right| =\mathcal O( \mu^{1-\kappa}).
\end{align*}

Next, for \eqref{sumQkj}, we consider the curve $ \gamma_{\mu}^j$ (with positive orientation) constructed in the following way: $\mathcal H^{j}$ divides $\mathbb C$ into two regions: $\mathcal H^{j,-}$ on the $\operatorname{Re}(x_2)$-negative side and $\mathcal H^{j,+}$ on $\operatorname{Re}(x_2)$-positive side, see \figref{imagtime}. We then define $\gamma_{\mu}^{j,-}$ as the subset of $\mathcal H^{j-1}$ that is contained in the interior of the closed curve $\gamma_\mu\subset \mathbb C$ (defined above), and let $\gamma_{\mu}^{j,+}=\gamma_\mu\cap \operatorname{Cl}({\mathcal H^{j,+}})$, where $\operatorname{Cl}$ denotes the closure. We then set $\gamma_{\mu}^j=\gamma_{\mu}^{j,-}\cup \gamma_{\mu}^{j,+}$, which is a closed curve for $\mu>0$ large enough by Lemma \ref{lem:imagtime}. Moreover, for all $\mu>0$ large, $\gamma_{\mu}^j$ contains all the simple roots $q^j<\ldots<q^1$ so that
 \begin{align*}
		  \int_{\gamma_{\mu}^j} \frac{1}{{Q}(x_2)}dx_2 = 2\pi i \sum_{l=1}^j \frac{1}{{Q}'(q^l)},
		 \end{align*}
		 by the residue theorem.
Letting $\mu\rightarrow \infty$, we conclude that
\begin{align*}
		  2\pi i \sum_{l=1}^j \frac{1}{{Q}'(q^l)} = \int_{\mathcal H^{j}} \frac{1}{{Q}(x_2)}dx_2 = 2 i T^j(-1)^j.
		 \end{align*}
        In the last equality, we have used that $dx_2 = i {Q}(x_2)ds$ by \eqref{imagtime} and the fact that $\mathcal H^j$ is symmetric with respect to conjugation, see assertion \ref{it:imag6} of Lemma \ref{lem:imagtime}.
		 The factor $(-1)^j$ comes from the fact that $\mathcal H^{j}$ is oriented positively if and only if  $j$ is even, see \figref{imagtime}. %Setting $T^j = \frac{1}{2\pi}S_{j-1}$ completes the proof.
	\end{proof}
\begin{remark}\remlab{rem_michaelson}
 For $\kappa=2$ and $Q(x_2)=-x_2^2+1$, we have $\mathcal H^1=i\mathbb R$ and therefore $p^1=0$.  Indeed, by setting $x_2=iv_2$ in \eqref{imagtime}, we have
 \begin{align*}
  v_2'=v_2^2+1,
 \end{align*}
 with the solution $v_2(t)=\tan (t)$ for $v_2(0)=0$. Consequently,
%  The solution of $x_2'=iQ(x_2)$ with $x_2(0)=0$ is therefore $x_2(t)=i\tan(t)$, satisfying
 \begin{align*}
  v_2(t)\to \infty \mbox{  for  $t\to T^1:=\frac{\pi}{2}$},
 \end{align*}
 in agreement with \eqref{sumQkj}.
%  in agreement with the definition above.

% and we therefore conclude that $T^1=\frac{\pi}{2}$ in this case.
\end{remark}

\begin{remark}\label{rem:el}
 Below we identify $\mathbb S^1$ with $\mathbb T^1=\mathbb R/(2\pi \mathbb Z)$ in the usual way. Then $\breve e^l$ and $\breve h^l$ become
 \begin{align*}
  \breve e^l:=\frac{\pi}{\kappa-1}(l-1),\quad \breve h^l:=\frac{\pi}{\kappa-1}(l-1)+\frac{\pi}{2(\kappa-1)},\quad l\in \{1,\ldots,2(\kappa-1)\}.
 \end{align*}

\end{remark}

\subsection{The unperturbed manifolds}
We first recall from the introduction, that the change of coordinates, defined by
\begin{align}\label{eq:xyzx2y2z21}
(x,y,z)\mapsto \begin{cases}
 x_2 =\epsilon^{-1} x,\\
 y_2 = \epsilon^{-\kappa} y,\\
 z_2 =\epsilon^{-\kappa} z,
\end{cases}
 \end{align}
 brings \eqref{x2y2z2fast} into \eqref{XYZfirst}, repeated here for convenience:
 \begin{equation}\label{eq:XYZ}
\begin{aligned}
 \dot x&={P}(x,\epsilon)+F(x,y,z,\epsilon),\\
 \dot y&=z-\frac12 P'_x(x,\epsilon) y+G(x,y,z,\epsilon),\\
 \dot z &=-y -\frac12 P'_x(x,\epsilon) z+ H(x,y,z,\epsilon),
\end{aligned}
\end{equation}
where $P$ is the polynomial defined in \eqref{PkQkfirst}:
\begin{align}
 {P} (x,\epsilon) := \epsilon^\kappa {Q}(\epsilon^{-1} x) = -x^\kappa + \sum_{\alpha=0}^{\kappa-1} \epsilon^{\kappa-\alpha} a_\alpha x^\alpha.\label{eq:PkQk}
\end{align}
Notice that  $P$ is homogeneous:
\begin{align}
 P(r\breve x,r\breve \epsilon) = r^\kappa P(\breve x,\breve \epsilon)\quad \forall\,r\ge 0,\,(\breve x,\breve \epsilon)\in \mathbb S^1.\label{eq:homoP}
\end{align}
Now, setting $\epsilon=0$ in \eqref{XYZ} gives
\begin{equation}\label{eq:XYZ0}
\begin{aligned}
 \dot x &= -x^\kappa+F(x,y,z,0),\\
  \dot y&=z+\frac{\kappa}{2} x^{\kappa-1} y+G(x,y,z,0),\\
 \dot z &=-y +\frac{\kappa}{2} x^{\kappa-1}z+H(x,y,z,0),
\end{aligned}
\end{equation}
Here $(x,y,z)=(0,0,0)$ is a zero-Hopf singularity with the eigenvalues of the linearization being $0,\pm i$ (since $\kappa\in \mathbb N$, $\kappa\ge 2$)
% The degeneracy condition of \cite{baldom2013a} (that the coefficient of $x^2$ in the $x$-equation is nonzero at the bifurcation) is however violated for $k>2$ and therefore \eqref{XYZ} cannot be brought into normal form \eqref{zeroHopf0} in this case.
\begin{remark} We mention that in the case of \eqref{3rd}, the system \eqref{XYZ0} can be brought into the following form
% \begin{align*}
%  \dot x
% \end{align*}
\begin{equation}\label{eq:xyzP00}
\begin{aligned}
 \dot x &=-(x-z)^\kappa,\\
 \dot y &=z,\\
 \dot z &=-y-(x-z)^\kappa,
\end{aligned}
\end{equation}
by an analytic change of coordinates,
see \eqref{xyzP0app}. We can also reduce this system to the third order equation
\begin{align*}                                                                                                                                                                                                                                                                                                                                                                                 \widehat f'''+\widehat f' = -\widehat f^\kappa.                                                                                                                                                                                                                                                                                                                                                                           \end{align*}
upon setting $\widehat f=x-z$. Alternatively, one can define $f=\epsilon^{-1} \widehat f$ in \eqref{3rd} and let $\epsilon \to 0$. %We emphasize that $\widehat f=x-z$.
                                                                                                                                                                                                                                                                                                                                                                               \end{remark}
We now turn to invariant manifold solutions of \eqref{XYZ0} that are graphs over $x$ in sectors of the complex plane. We therefore define the local and open sector:
\begin{align}\label{eq:Sector}
 S(\breve e,\delta,\eta) = \left\{x\in \mathbb C\,:\,0< \vert x\vert<\delta,\,\vert\operatorname{arg}(x)-\breve e\vert< \frac12 \eta\right\},
\end{align}
centered along the \textit{direction} $\breve e\in [0,2\pi)$ with \textit{radius} $\delta>0$ and \textit{opening} $\eta\in [0,2\pi)$.
\begin{lemma}\label{lem:Sj}
Consider $\chi>0$ and $\delta>0$ small enough and define
\begin{align}
 \breve e^l: = \frac{\pi}{\kappa-1} (l-1),\quad l\in\{1,\ldots,2(\kappa-1)\},\quad \eta := \frac{\pi}{\kappa-1}+\chi,\label{eq:ehatlangles}
\end{align}
see Remark \ref{rem:el} (or recall Lemma \ref{lem:realtime} (assertion \ref{it:real3})),
and 
\begin{align}
 S^l:=S(\breve e^l,\delta,\eta)\subset \mathbb C,\quad l\in \{1,\ldots,2(\kappa-1)\},\label{eq:Sldefn}
\end{align}
see Fig. \ref{fig:SS}.
Then the system \eqref{XYZ0} has $2(\kappa-1)$-many analytic invariant manifolds defined locally by
 the graph form 
 \begin{align}\label{eq:unperturbed}
 \begin{pmatrix}
   y\\
   z
 \end{pmatrix} &= x^\kappa \psi^l(x),\quad x\in S^l,
 \end{align}
 for any $l\in \{1,\ldots,2(\kappa-1)\}$.
 The family $\{\psi^l\}_{l=1}^{2(\kappa-1)}$, with $\psi^l:S^l\to \mathbb C^2$ for each $l\in \{1,\ldots,2(\kappa-1)\}$, is equivariant with respect to conjugation in the following sense:
\begin{align}\label{eq:conj}
\overline{\psi^l(x)} = \psi^{2\kappa-l}(\overline x),\quad x\in S^l, % \overline{\phi_i(s)}=\phi_i(\overline s)\quad \forall\, s\in S(\pi(i-1),\chi^{\kappa-1},(\kappa-1)\eta),
\end{align}
with $2\kappa-l$ understood $\operatorname{mod}(2(\kappa-1))$. Moreover, $\psi^l$ is continuous on the closure of $S^l$ with $\psi^l(0)=(0,0)$,
%  Here
% \begin{align*}
%
% \end{align*}
%  with $2\kappa-l$ understood $\operatorname{mod}(2(\kappa-1))$, for any $l\in \{1,\ldots,2(\kappa-1)\}$, and each
% Finally, $\psi^l:S^l\rightarrow \mathbb C^2$ is
being the $(\kappa-1)$-sum of a Gevrey-$\frac{1}{\kappa-1}$ series:
\begin{align}\label{eq:fseries0}
 \psi^l(x)\sim_\frac{1}{\kappa-1} \sum_{\alpha=1}^\infty \psi_\alpha x^\alpha,\quad \vert \psi_{\alpha}\vert\le  c_1 c_2^{\alpha-1} \Gamma\left(\frac{\alpha}{\kappa-1}\right) \quad \forall\,\alpha\ge 2(\kappa-1),
\end{align}
for some $c_1,c_2>0$,
 in the direction $\breve e^l$ for any $l\in \{1,\ldots,2(\kappa-1)\}$. In particular, the series is independent of $l$.
% The series $\sum_{n=\kappa}^\infty \psi_{n} s^{n}$ is $1$-summable with $s\in i\mathbb R$ defining the singular directions.
% with $\sum_{n=\kappa}^\infty \phi_{n} x^n$ a formal $\frac{1}{\kappa}$-Gevrey series.
% % \end{align'}

\end{lemma}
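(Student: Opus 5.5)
Substituting the ansatz $(y,z)=x^\kappa\psi(x)$ into \eqref{XYZ0} turns invariance into a singular first-order ODE for $\psi$. By \eqref{Xcond} with $\epsilon=0$, $F,G,H$ are $\mathcal O(r^{3\kappa-2})$ under the weighted scaling $x=r\breve x$, $(y,z)=r^\kappa(\breve y,\breve z)$. Using $\dot x=-x^\kappa+F$, the invariance condition reads
\begin{align*}
\left(-x^\kappa+F\right)\frac{d}{dx}\left(x^\kappa\psi\right)=J x^\kappa\psi+\frac{\kappa}{2}x^{2\kappa-1}\psi+\begin{pmatrix}G\\ H\end{pmatrix},\qquad J=\begin{pmatrix}0&1\\-1&0\end{pmatrix}.
\end{align*}
We divide by $x^\kappa$ and use that $J$ is invertible. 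Since $F=\mathcal O(x^{3\kappa-2})$ on the graph, the leading derivative term is $-x^{2\kappa}\psi'$ up to lower-order corrections, so we get
\begin{align*}
x^{\kappa}\psi'=-J\psi+x^{\kappa-1}\mathcal R(x,\psi),
\end{align*}
with $\mathcal R$ analytic and $\mathcal R(0,0)=\mathcal O(1)$; it is generated by $G,H$ at $x^{2\kappa-2}$ order after division. Equivalently, $x^{\kappa}\psi'=A\psi+x^{\kappa-1}b(x)+\ldots$ with $A=-J$, whose eigenvalues are $\pm i$.

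This is a generalized saddle-node (irregular singular point of Poincar\'e rank $\kappa-1$) with invertible linear part. I would diagonalize $-J$ with eigenvectors $(1,\mp i)$. In the new variables the scalar equations $x^\kappa u_\pm'=\pm i u_\pm+\ldots$ have formal solutions $e^{\mp i x^{1-\kappa}/(\kappa-1)}$ for the homogeneous part. From this I would proceed in three steps.

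\textbf{Step 1: formal series.} Since $A$ is invertible, a unique formal power series solution $\hat\psi=\sum_{\alpha\ge1}\psi_\alpha x^\alpha$ is obtained recursively: $A\psi_\alpha$ equals a polynomial in the lower coefficients plus $(\alpha-\kappa+1)\psi_{\alpha-\kappa+1}$. This gives the $l$-independence claimed in the lemma, and also $\psi^l(0)=0$ because $\mathcal R$ enters with the factor $x^{\kappa-1}$.

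\textbf{Step 2: Gevrey bound.} The bound \eqref{fseries0} follows from a standard majorant argument on the recursion, since each step loses a factor $\alpha/(\kappa-1)$. Alternatively, I would invoke the Ramis--Sibuya/Braaksma theory for nonlinear irregular singular systems of rank $\kappa-1$. This gives Gevrey order $\frac1{\kappa-1}$ in $x$, equivalently Gevrey-1 in $s=x^{\kappa-1}$.

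\textbf{Step 3: sectorial solutions and summability.} The Stokes (singular) directions of $(\kappa-1)$-summability are the directions where $\operatorname{Re}(\pm i x^{1-\kappa})=0$ changes role. Concretely, the anti-Stokes lines $x^{1-\kappa}\in\mathbb R$ occur at $\arg x=\breve e^l=\pi(l-1)/(\kappa-1)$, and the singular directions lie at $\arg x=\breve h^l$, midway between them. For each $l$ I would construct the actual solution on $S^l$ as a fixed point of the integral operator obtained by integrating each diagonal component from $x=0$ along paths in $S^l$ on which $\operatorname{Re}(\pm i x^{1-\kappa})$ is monotone in the good direction. Here I use that $S^l$ has opening $\frac{\pi}{\kappa-1}+\chi$, so it contains no singular direction in its interior once $\chi$ is small. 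The contraction is then performed in a weighted sup-norm on $\overline{S^l}\cap B_\delta$. Borel--Laplace theory (the direction $\breve e^l$ is non-singular) identifies the fixed point with the $(\kappa-1)$-sum of $\hat\psi$ in direction $\breve e^l$, and it gives continuity up to the closure with $\psi^l(0)=0$.

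The conjugation property \eqref{conj} follows from real-analyticity of $F,G,H$. If $\psi(x)$ solves the invariance equation, so does $\overline{\psi(\overline x)}$. The reflection $x\mapsto\overline x$ maps $S^l$ to $S^{2\kappa-l}$ because $-\breve e^l\equiv\breve e^{2\kappa-l}$ mod $2\pi$. Uniqueness of the $(\kappa-1)$-sum with a given asymptotic expansion, together with the reality of the $\psi_\alpha$, then gives the identity.

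The main obstacle is Step 3. A sector of opening slightly larger than $\pi/(\kappa-1)$ is exactly the threshold required for uniqueness of sums (Watson's lemma for $k=\kappa-1$). The delicate point is choosing integration paths so that both exponential factors $e^{\pm i x^{1-\kappa}/(\kappa-1)}$ are controlled simultaneously. My plan is to pass to the variable $s=x^{\kappa-1}$ (sector of opening $\pi+(\kappa-1)\chi$, rank-one problem) and integrate the two components from $0$ along rays slightly rotated in opposite directions.
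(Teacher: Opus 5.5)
Your outline matches the paper's. The ansatz $(y,z)=x^\kappa\psi$ is exactly the chart \eqref{blowup0}, which gives the generalized saddle-node \eqref{y1z1r1} of Poincar\'e rank $\kappa-1$. The paper does not carry out your Steps 1--3; it applies \cite[Theorem 2.1]{ksum} with $k=\kappa-1$, whose directional hypothesis reduces to $\breve e\neq\breve h^l$, and reads the manifold off the invariant set $z=0$ of that normal form. It gets \eqref{conj} from uniqueness and real-analyticity, as you do. Your Steps 1--2 and the conjugation argument are fine. One minor point: the inhomogeneity is $x^{-\kappa}(G,H)=\mathcal O(x^{2\kappa-2})$, so $\mathcal R(x,0)=\mathcal O(x^{\kappa-1})$, and $\mathcal R$ also contains the linear term $-\frac{3\kappa}{2}\psi$.

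The gap is in Step 3, the step you flag as the main obstacle.

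\textbf{The sectors do contain singular directions.} Your claim that $S^l$ contains no singular direction is false. The directions $\breve h^{l-1}$ and $\breve h^l$ lie at angular distance $\frac{\pi}{2(\kappa-1)}<\frac{\eta}{2}$ from $\breve e^l$, so both are interior to $S^l$. This is forced: consecutive singular directions are $\frac{\pi}{\kappa-1}$ apart, while you rightly want opening $>\frac{\pi}{\kappa-1}$. It is also why $S^l\cap S^{l+1}=S(\breve h^l,\delta,\chi)$, the overlap on which the paper later compares $\psi^l$ and $\psi^{l+1}$.

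\textbf{Why this breaks your construction.} Take $l=1$, $s=x^{\kappa-1}$, and the component $(\kappa-1)s^2u'=iu+\cdots$. A bounded integral representation needs a path from $0$ to $s$, leaving $0$ with $\arg t\in(-\pi,0)$, along which $\operatorname{Re}(i/t)\le\operatorname{Re}(i/s)$. Rays followed by arcs of radius $|s|$ achieve this only for $\arg s\le\frac{\pi}{2}$. For $\frac{\pi}{2}<\arg s<\frac{\pi}{2}+\frac{(\kappa-1)\chi}{2}$, no admissible path inside the sector stays in $\{|t|\le|s|\}$. The other component fails symmetrically below $-\frac{\pi}{2}$. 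So your ``slightly rotated rays'' give the solution only on the sector of opening $\frac{\pi}{\kappa-1}$ between $\breve h^{l-1}$ and $\breve h^l$, not on $S^l$. The constructions for $l$ and $l+1$ then share only the ray $\arg x=\breve h^l$, and you never exceed the Watson threshold you invoke.

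\textbf{The repair.} To cross a singular direction the path must leave the disc: go out along an admissible ray to radius $R'\ge|s|/\sin(\arg s)$, along the arc of that radius, then radially back to $s$. Equivalently, take Laplace transforms in every direction strictly between $\breve h^{l-1}$ and $\breve h^l$; the union of the resulting sectors has opening up to $\frac{2\pi}{\kappa-1}$. The correct criterion is a non-singular bisector $\breve e^l$ and opening $<\frac{2\pi}{\kappa-1}$, not avoidance of singular directions. With this repair, or by quoting Braaksma's summability theorem or \cite[Theorem 2.1]{ksum} as the paper does, your argument proves the lemma.
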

\begin{proof}
We define $(r_1,y_1,z_1)$ by 
\begin{align}\label{eq:blowup0}
 \begin{cases}
 x =r_1,\\
  y =r_1^\kappa y_1,\\
  z = r_1^\kappa z_1.
 \end{cases}
\end{align}
This brings \eqref{XYZ0} into the following form
\begin{equation}\label{eq:y1z1r1}
\begin{aligned}
%  (\kappa-1)r_1^{2(\kappa-1)}  \frac{dy_1}{dr_1^{\kappa-1}}
%  
 r_1^\kappa \left[-(1-r_1^{\kappa-1}z_1)^\kappa+r_1^\kappa F_1(r_1,y_1,z_1,0)\right]\frac{dy_1}{d r_1} & =  z_1+\frac{\kappa}{2}r_1^{\kappa-1} y_1+r_1 G_1(r_1,y_1,z_1,0),\\
 r_1^\kappa\left[-(1-r_1^{\kappa-1}z_1)^\kappa+r_1^\kappa F_1(r_1,y_1,z_1,0)\right]\frac{dz_1}{dr_1} &=  -y_1+\frac{\kappa}{2}r_1^{\kappa-1} z_1+r_1 H_1(r_1,y_1,z_1,0),
\end{aligned}
\end{equation}
upon eliminating time through division by $\dot r_1=\dot x$.
Here we have defined
\begin{align}\label{eq:R1S1T1}
 W_1(r_1,y_1,z_1,\epsilon_1):=r_1^{-3\kappa+2} W(r_1,r_1^\kappa y_1,r_1^\kappa z_1,r_1\epsilon_1),\quad W=F,G,H,
\end{align}
which are well-defined and analytic for $r_1=0$ by \eqref{Xcond}.
The system \eqref{y1z1r1} defines a generalized saddle-node with Poincar\'e rank $\kappa-1$, see e.g. \cite{bonckaert2008a,ksum}. We therefore obtain the solutions $(y_{1},z_{1})=\psi^l(r_1)\in \mathbb C^2$, $r_1\in S^l$, by applying \cite[Theorem 2.1]{ksum} with $k= \kappa-1$, see \cite[Eq. (1)]{ksum}. For comparison with \cite{ksum}, we emphasize that $\lambda_1=\overline \lambda_2 = \pm i$ and $\omega_{\kappa-1}(\delta,\hat \eta,\chi)=S(\breve e,\delta,\eta)$. Recall that $\breve h^l:=\frac{\pi}{\kappa-1}(l-1)+\frac{\pi}{2(\kappa-1)}$, see Remark \ref{rem:el} (recall also Lemma \ref{lem:imagtime} (assertion \ref{it:imag3})). Then a simple calculation shows that the condition \cite[Eq. (12)]{ksum} is
% therefore satisfied for all $(k-1)\theta  \ne \frac{\pi}{2} +\pi n$, $n\in \mathbb Z$, for some $\xi>0$ small enough.
satisfied for some $\xi>0$ small enough if
\begin{align*}
 \breve e\ne \breve h^l,
\end{align*}
for all $l\in\{1,\ldots,2(\kappa-1)\}$.
The theorem therefore applies for all $\breve e=\breve e^l$, $l\in \{1,\ldots,2(\kappa-1)\}$. Due to the invariance of $z=0$ for the normal form \cite[Eq. (14)]{ksum}, the desired invariant manifold is then given by \cite[Eq. (13)]{ksum} with $z=0$, see also \cite[Proposition 4.1]{ksum}. %The property  \eqref{conj} is an easy consequence of \eqref{
%We are only interested in $\mathbf j=0$ so that the normal form \cite[Eq. (14)]{ksum} has $z=0$ as invariant set. Consequently Notice that
Finally, the property \eqref{conj} follows from the uniqueness of $\psi^l:S^l\rightarrow \mathbb C^2$ (see \cite{ksum}) and the fact that the system \eqref{XYZ0} is real-analytic, being invariant with respect to conjugation.
\end{proof}

%  \begin{remark}

In the following, we will refer to the $2(\kappa-1)$-many invariant manifolds given by \eqref{unperturbed} as \textbf{the unperturbed manifolds of \eqref{XYZ}}.

%  is not trnonzero.  invariant manifolds are nonanalytic. 
% \begin{remark}
\begin{remark}\label{rem:ksum}
In the case of \eqref{xyzP00}, one can reduce \eqref{y1z1r1} to Poincar\'e rank $1$ upon setting $s=r_1^{\kappa-1}$:
\begin{equation}\label{eq:y1z1s}
\begin{aligned}
%  (k-1)r_1^{2(k-1)}  \frac{dy_1}{dr_1^{k-1}}
%
(\kappa-1)s^2 \frac{dy_1}{ds} & =-(1-s z_1)^{-\kappa} z_1 - \kappa s  y_1,\\
 (\kappa-1)s^2\frac{dz_1}{ds} &=  (1-s z_1)^{-\kappa}y_1+1 - \kappa s  z_1.
\end{aligned}
\end{equation}
Consequently, for \eqref{3rd} we can write the resulting functions $\psi^l$ in the following form
\begin{align}\label{eq:PsiPhi}
 \psi^{l}(x) = \begin{cases}
             \phi^1 (x^{\kappa-1})\mbox{ for $l$ odd},\\
             \phi^2 (x^{\kappa-1})\mbox{ for $l$ even},
            \end{cases}
\end{align}
%
%
%
% \begin{align*}
%  =x^\kappa\phi_1(x^{\kappa-1}),\quad x\in S_{2l-1},\quad \forall\,l\in \left\{1,3,\ldots,2\lfloor \tfrac{\kappa+1}{2}\rfloor -1\right\},\\
%  \begin{pmatrix}
%    y\\
%    z
%  \end{pmatrix} &= \psi^j(x):=x^\kappa\phi_2(x^{\kappa-1}),\quad x\in S_{2l},\quad \forall\,l\in \left\{2,4,\ldots,2\lfloor \tfrac{\kappa}{2}\rfloor\right\},
% \end{align*}
where $$\phi^i:S(\pi(i-1),\delta^{\kappa-1},(\kappa-1)\eta)\rightarrow \mathbb C^2, \quad i\in\{1,2\}.$$ In particular, $\phi^1$ and $\phi^2$ in \eqref{PsiPhi} are the $1$-sums of a Gevrey-$1$ formal series
\begin{align}\label{eq:fseries}
 \phi^i(s) \sim_{1} \sum_{\alpha=0}^\infty \phi_{\alpha} s^{\alpha}\quad \mbox{ for } \, s\in S(\pi(i-1),\delta^{\kappa-1},(\kappa-1)\eta),
\end{align}
for $i\in \{1,2\}$, along the directions $\breve e=0$ and $\breve e=\pi$, respectively.
% Moreover, $\phi_n\in \mathbb R^2$ for all $n\in \mathbb N_0$ and
% % with
% \begin{align*}
%  \phi_n = \begin{cases}
%            (*,0) & \mbox{$n$ even},\\
%            (0,*) & \mbox{$n$ odd},
%           \end{cases}\quad \phi_{0} = (-1,0),\quad \phi_1 = (0,\kappa).
% \end{align*}
% This is
%  The existence of $2(\kappa-1)$ locally invariant manifold can also be obtained from \cite{\kappasum} directly by working on \eqref{y1z1r1} (with Poincar\'e rank $k-1$). In this way, we obtain solutions $(y_{1},z_{1})=\psi^l(x)\in \mathbb C^2$, $x\in S^l$, that are $k$-sums of a Gevrey-$\frac1k$ formal series in $x$. This will be important in generalizations of our results, where the system for $\epsilon=0$ cannot be reduced to Poincar\'e rank $1$ as in \eqref{y1z1s}. In the present paper, we will also use \cite{ksum} for our generalized system below in the development of certain normal forms suitable for extended the invariant manifolds, see Section \ref{sec:exit}.
%  \end{remark}
% In the cases, where \eqref{y1z1r1} can be reduced to Poinar\'e rank $1$ (upon setting $s=r_1^{k-1}$) we have
In this case, we have that
\begin{align}\label{eq:diffPsi}
x^\kappa(\psi^{j+1}(x)-\psi^j(x)) = x^\kappa (\phi^2(x^{\kappa-1})-\phi^1(x^{\kappa-1})),\quad x\in S(\breve h^j,\delta,\chi).
\end{align}
for all odd $j$. Here the right hand side of \eqref{diffPsi} is independent of $j$. In \eqref{diffPsi}, we have used that
\begin{align*}
(S^{j+1}\cap S^j)=S(\breve h^j,\delta,\chi),
\end{align*}
cf. \eqref{Sldefn} and \eqref{Sector}, see also Remark \ref{rem:el}.
\end{remark}
% is determined by $\phi_1$ and $\phi_2$.

 In our main result, we will assume the following:
 \begin{assumption}
	\assumptionlab{ass2}
	%The vector field $Z : \mathbb R^2 \times \mathbb R \times I \to \mathbb R^2$ satisfies the following constraints:
% 	${Q}$ is real and there are $k$ simple real roots $q^k<q^{k-1}<\cdots<q^1$ of ${Q}$.
The asymptotic series \eqref{fseries0} is divergent for any $x\ne 0$
\end{assumption}
\begin{lemma}\label{lem:Psil}
 Suppose that \assumptionref{ass2} holds true. Then there exists at least one $j\in \{1,\ldots,2(\kappa-1)\}$ such that $\psi^{j+1}\ne \psi^j$ on
 \begin{align*}
 (S^{j+1}\cap S^j)=S(\breve h^j,\delta,\chi),\quad \breve h^j = \breve e^j + \frac{\pi}{2(\kappa-1)},
 \end{align*}
 with $2\kappa-1\equiv 1$, recall \eqref{ehatlangles}.  Suppose next that \eqref{diffPsi} holds and that the series \eqref{fseries} is nonanalytic. Then $\psi^{j+1}\ne \psi^j$ for all $j$.
\end{lemma}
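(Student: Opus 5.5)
The plan is to argue by contradiction and glue the sectorial solutions into one function analytic on a punctured disc. Suppose that $\psi^{j+1}=\psi^j$ on $S^{j+1}\cap S^j=S(\breve h^j,\delta,\chi)$ for every $j\in\{1,\ldots,2(\kappa-1)\}$, with indices taken cyclically.

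First I would check the covering. The sectors $S^l=S(\breve e^l,\delta,\eta)$ have directions $\breve e^l=\frac{\pi}{\kappa-1}(l-1)$ and opening $\eta=\frac{\pi}{\kappa-1}+\chi$. So consecutive sectors overlap in $S(\breve h^l,\delta,\chi)$, non-consecutive ones do not overlap for $\chi$ small, and together they cover the punctured disc $B_\delta\setminus\{0\}$.

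Next I would define $\Psi:B_\delta\setminus\{0\}\to\mathbb C^2$ by $\Psi=\psi^l$ on $S^l$. This is well-defined because of the assumed equalities on the pairwise overlaps, and it is analytic since each $\psi^l$ is analytic on $S^l$.

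Then I would remove the singularity. By Lemma \ref{lem:Sj}, each $\psi^l$ is continuous on the closure of $S^l$ with $\psi^l(0)=0$. Hence $\Psi$ is bounded near $0$, and Riemann's removable singularity theorem extends $\Psi$ analytically to $B_\delta$. Its Taylor series at $0$ converges on $B_\delta$.

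The last step uses uniqueness of asymptotic expansions. On any sector $S^l$, $\Psi=\psi^l$ has the asymptotic expansion $\sum_\alpha\psi_\alpha x^\alpha$ from \eqref{fseries0}. An analytic function's asymptotic expansion in a sector coincides with its Taylor series, so the series \eqref{fseries0} equals the Taylor series of $\Psi$ and converges for $0<|x|<\delta$. This contradicts \assumptionref{ass2}. The one point needing care is that the cyclic index convention matches the paper's statement ($2\kappa-1\equiv 1$), so that the overlap between $S^{2(\kappa-1)}$ and $S^1$ is included.

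For the second claim, assume \eqref{diffPsi} holds. Then the differences $x^\kappa(\psi^{j+1}-\psi^j)$ on $S(\breve h^j,\delta,\chi)$ are, up to the index parity, all given by the single function $x^\kappa(\phi^2(x^{\kappa-1})-\phi^1(x^{\kappa-1}))$.

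For odd $j$, if $\psi^{j+1}=\psi^j$ on one overlap, then $\phi^2=\phi^1$ on the corresponding sector in $s=x^{\kappa-1}$. Analytic continuation then gives $\phi^1=\phi^2$ on the intersection of their sectors in the $s$-plane.

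For even $j$, I would use the analogous identity, namely the version of \eqref{diffPsi} with the roles of $\phi^1$ and $\phi^2$ swapped. The sectors $S(\breve h^j,\delta,\chi)$ map under $s=x^{\kappa-1}$ onto sectors around $\pm\pi/2$, and these two sectors are exchanged by the parity of $j$.

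So if a single $\psi^{j+1}=\psi^j$ held, $\phi^1=\phi^2$ on one of the two sectors around $\pm\pi/2$ in the $s$-plane. By conjugation equivariance \eqref{conj}, which gives $\overline{\phi^i(s)}=\phi^i(\overline s)$, the equality then holds on the other sector too. Gluing $\phi^1$ and $\phi^2$ across both sectors then gives a bounded analytic function on a punctured disc in $s$. Exactly as in the first part, this makes the series \eqref{fseries} convergent, contradicting the assumption that it is nonanalytic.

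I expect the main obstacle to be this bookkeeping: checking that the $s$-sectors of $\phi^1$ and $\phi^2$ do cover the punctured $s$-disc, with their overlaps exactly the images of $S(\breve h^j,\delta,\chi)$. This requires the opening $(\kappa-1)\eta=\pi+(\kappa-1)\chi$ to exceed $\pi$, which it does.
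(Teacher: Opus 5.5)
Your first part is the paper's own argument. You assume equality on every overlap, glue the $\psi^l$ into one analytic function on $B_\delta\setminus\{0\}$, remove the singularity by Riemann's theorem, and contradict \assumptionref{ass2}. For the \eqref{diffPsi} part the paper only says it is ``clear enough'' and refers to \cite{bkt}. Your gluing of $\phi^1,\phi^2$ in $s=x^{\kappa-1}$ is a correct way to fill this in: you use $\overline{\phi^i(s)}=\phi^i(\overline s)$ to pass from the sector around $\pi/2$ to the one around $-\pi/2$.

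The one step you omit, which the paper's proof starts with, is the upgrade from ``not identically equal'' to ``$\psi^{j+1}(x)\ne\psi^j(x)$ for every $x\in S(\breve h^j,\delta,\chi)$''. Since \eqref{y1z1r1} is a regular analytic ODE in $r_1$ for $r_1\ne 0$, two solutions agreeing at a single point of the connected overlap agree on all of it. You should add this, because the pointwise form is what is used in Section~\ref{sec:complete} at the specific point $x^j(0)$.
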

\begin{proof}
 Suppose first that $\psi^{j+1}(x)\ne \psi^j(x)$ for some
 $x\in S(\breve h^j,\delta,\chi)$ and some $j\in \{1,\ldots,2(\kappa-1)\}$. Then since \eqref{y1z1r1} is regular for all $x=r_1\ne 0$ small enough, this implies that $\psi^{j+1}(x)\ne \psi^j(x)$ for all $
  x\in S(\breve h^j,\delta,\chi)$ by uniqueness of solutions.

  Suppose next that
 $\psi^{l+1}= \psi^l$ on $S(\breve h^l,\delta,\chi)$ for all $l\in \{1,\ldots,2(\kappa-1)\}$. Then $\{\psi^l\}_{l\in \{1,\ldots,2(\kappa-1)\}}$ defines a single analytic function $\psi$ on $B_{\delta}$ by Riemann's theorem of removable singularities. Hence \eqref{fseries0} is convergent and we arrive at a contradiction.

 We believe that the statement regarding \eqref{diffPsi} is clear enough, see also \cite[Lemma 5.3]{bkt}.
%  It is clear that $\psi_{l+1}(x)\ne \psi^l(x)$ for some
%  $x\in S(\breve h^l,\delta,\chi)$,
%  for at least one $l\in \{1,\ldots,2(k-1)\}$. However, since \eqref{y1z1r1} is regular for $x=r_1\ne 0$, this implies that $\psi_{l+1}(x)\ne \psi^l(x)$ for all $
%  x\in S(\breve h^l,\delta,\chi).$
\end{proof}

%  This means that Clearly
% \end{remark}
% \begin{lemma}
%  There exists a constant $\Theta$ so that 
%  \begin{align*}
%   \phi_2(s)-\phi_1(s) ??
%  \end{align*}
% 
% \end{lemma}
% \begin{proof}
%  We define $(\Delta y,\Delta z) = \phi_2(s)-\phi_1(s)$. Then by the mean value theorem we find
%  \begin{align*}
%  (k-1)s^2 \frac{d\Delta y}{ds} &=-ks \Delta y - (1+2k^2 s^2+\mathcal O(s^3))\Delta z,\\
%  (k-1)s^2 \frac{d\Delta z}{ds} &=-2ks(1 +\mathcal O(s^2)) \Delta z + (1+k^2 s^2+\mathcal O(s^3))\Delta y,
%  \end{align*}
% for $s\in S(\pi/2,\chi,\chi^{k-1})\cup G(-\pi/2,\chi,\chi^{k-1})$. We put $s=it$:
% 
% \end{proof}

\begin{figure*}[t!]
		\centering
		\subfigure[]{\includegraphics[width=0.47\textwidth]{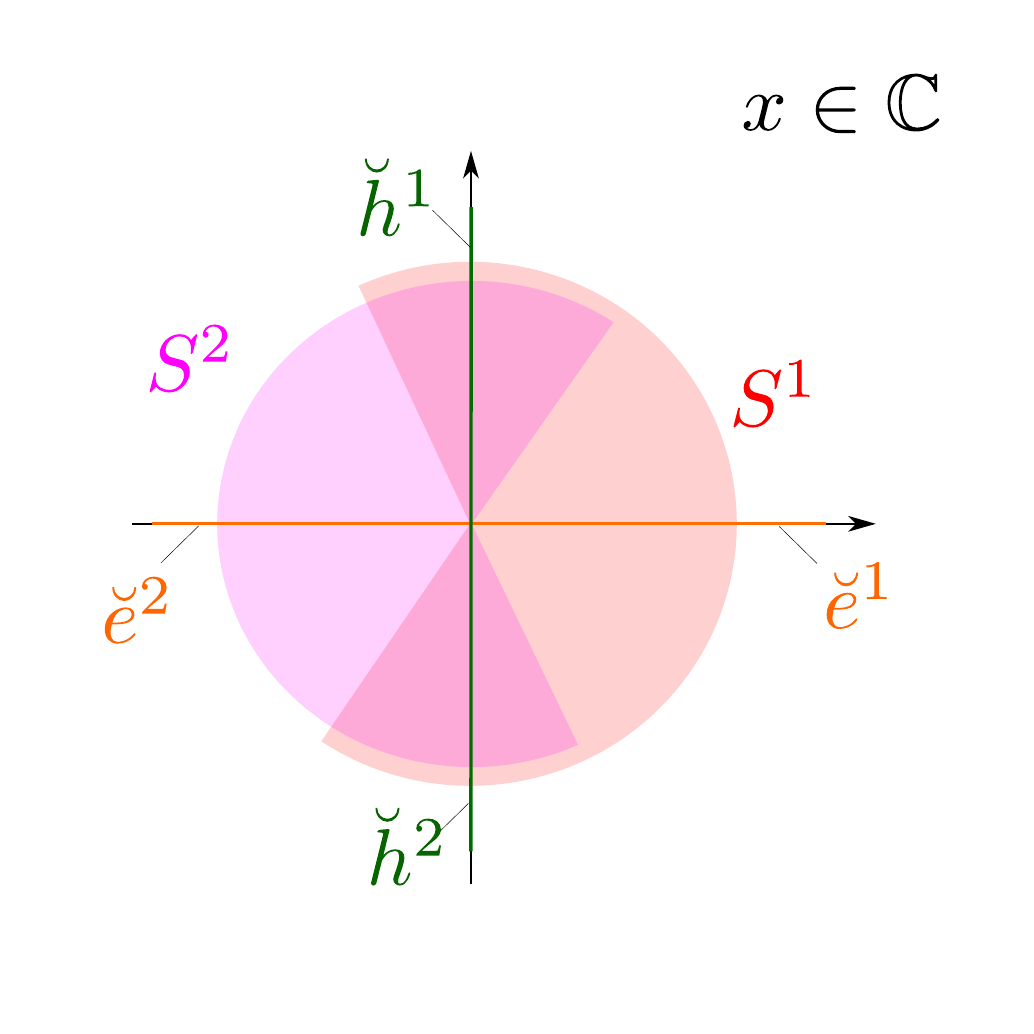}}
		\subfigure[]{\includegraphics[width=0.47\textwidth]{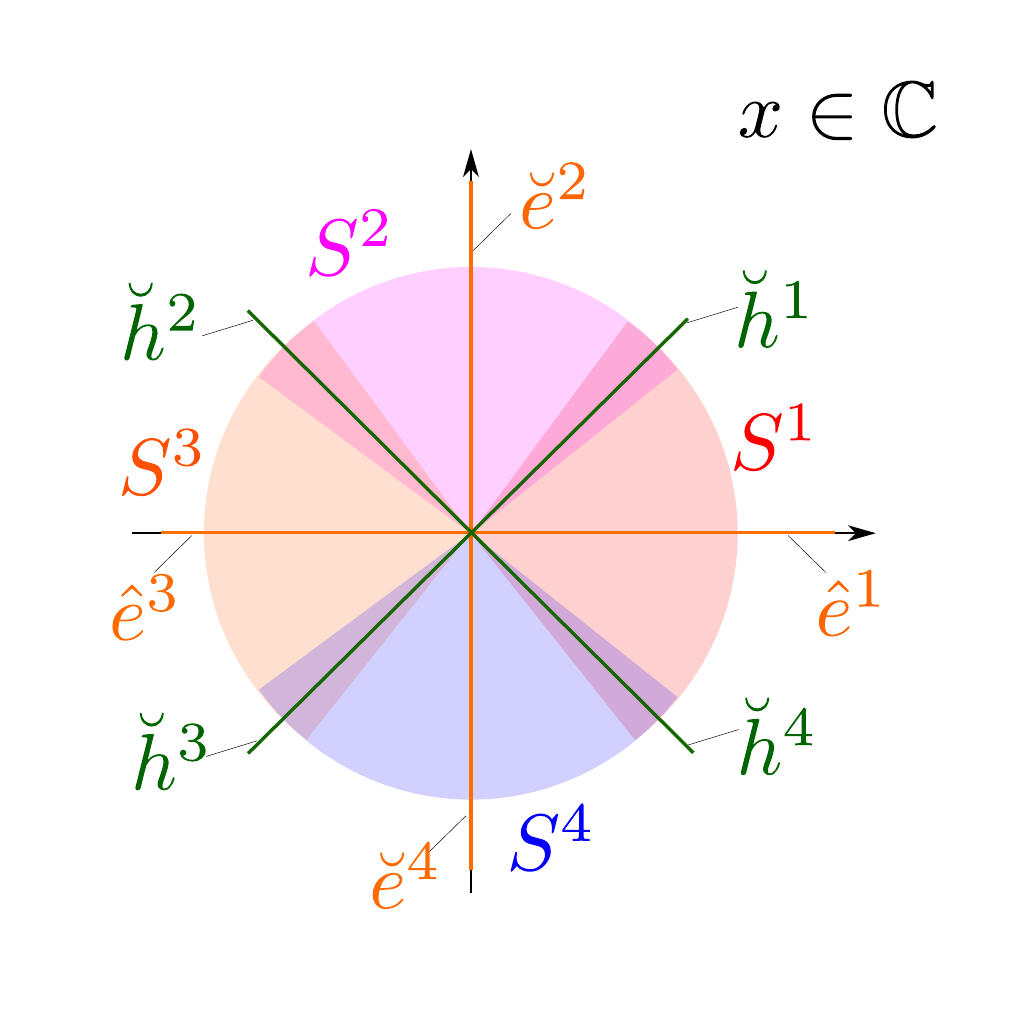}}
		\caption{The sectors $S^l$, $l\in \{1,\ldots,2(\kappa-1)\}$, in Lemma \ref{lem:Sj} for $\kappa=2$ (a) and $\kappa=3$ (b). The sectors $S^l$ are centered along $\breve e^l$. On the hand, $\breve h^l$ corresponds to a $\frac{\pi}{2(\kappa-1)}$ rotation of $\breve e^l$ counter-clockwise. It will be important that $S^{l}\cap S^{l+1}$ (with $2\kappa-1\equiv 1$) is an open sector centered along $\breve h^l$ with opening $2\chi>0$.  }
		\label{fig:SS}
	\end{figure*}

	\subsection{Main result}\label{sec:main}
% \note{I think we should present the normal form here; we could delay the proof to the appendix}
We are now ready to state our main result on the asymptotic expansion of the splitting of $\mathbf{W}^{\sigma^j}(\mathbf q^{j}(\epsilon))$ and $\mathbf{W}^{\sigma^{j+1}}(\mathbf q^{j+1}(\epsilon))$ for $j\in \{1,\ldots,\kappa-1\}$. Here $\sigma^j\in \{u,s\}$ is given by \eqref{Wsu}.
\begin{theorem}\label{thm:main}
Consider \eqref{x2y2z2new0}, with $F,G,H$ locally defined and real-analytic, and
suppose that \assumptionref{ass1} and \assumptionref{ass2} hold true. Let
 $j\in \{1,\ldots,\kappa-1\}$ be such that $\psi^{j+1}\ne \psi^j$ on
 \begin{align*}
 (S^{j+1}\cap S^j)=S(\breve h^j,\delta,\chi),
 \end{align*}
 recall Lemma \ref{lem:Psil}.

 Next, define $p^j\in (q^{j+1},q^j)\subset \mathbb R$ as the intersection of ${\mathcal H}^j\subset \mathbb C$ with $\operatorname{Im}(x_2)=0$, recall Lemma \ref{lem:imagtime}.

 Finally, let $
(y_2^l,z_2^l)(p^j,\epsilon)$, $l\in \{j,j+1\}$, denote the $(y_2,z_2)$-coordinates of the first intersection of the local invariant manifolds $\mathbf{W}_{loc}^{\sigma^{l}}(\mathbf q^{l}(\epsilon))$ with $\{x_2=p^j\}$, and define
\begin{align*}
 \begin{cases} \Delta y_2^j(p^j,\epsilon): = y_2^{j+1}(p^j,\epsilon)-y_2^{j}(p^j,\epsilon),\\
  \Delta z_2^j(p^j,\epsilon): = z_2^{j+1}(p^j,\epsilon)-z_2^{j}(p^j,\epsilon),
 \end{cases}
\end{align*}
as the separation of $\mathbf{W}_{loc}^{\sigma^{j+1}}(\mathbf q^{j+1}(\epsilon))\cap \{x_2=p^j\}$ and $\mathbf{W}_{loc}^{\sigma^{j}}(\mathbf q^{j}(\epsilon))\cap \{x_2=p^j\}$.

Then there is an $\epsilon_0>0$ small enough such that the following holds true for all $\epsilon\in (0,\epsilon_0)$: $\Delta y_2^j(p^j,\epsilon)$ and $\Delta z_2^j(p^j,\epsilon)$ are well-defined, and have the following expansions:
% the ½+
% ly small splitting is given by
 \begin{equation}\label{eq:finaldist}
\begin{aligned}
\Delta y_2^j(p^j,\epsilon) +i \Delta z_2^j(p^j,\epsilon) &= \epsilon^{-\frac{3\kappa}{2}}\e^{-\epsilon^{1-\kappa} T^j +\Xi^j(\epsilon)} %
% \epsilon^{-\frac{3\kappa}{2}}\exp\left(-\epsilon^{1-\kappa} \Xi^j(\epsilon)\right),
\end{aligned}
\end{equation}
with $T^j>0$ given by \eqref{sumQkj}:
\begin{align*}
 T^j = \left| \sum_{l=1}^j \frac{\pi}{Q'(q^l)}\right|>0,
\end{align*}
and where $\Xi^j:[0,\epsilon_0)\rightarrow \mathbb C$ is a  $C^\infty$-smooth function.

% and where the $\mathcal O(\epsilon)$-term is $C^\infty$-smooth with respect to $0\le \epsilon\ll 1$.

% $\mathbf{W}_{loc}^{\sigma_{i}}(\mathbf q^{i}(\epsilon))$, $i\in \{j,j+1\}$, denote the local invariant manifolds of the hyperbolic singularities $\mathbf q^{j+1}(\epsilon)$ and $\textbf{q}_j$ of \eqref{x2y2z2new0} for all $0<\epsilon\ll 1$.  Finally, define
% Then the following holds true:
%
% Finally,
%
%
% define
% \begin{align*}
%  \Delta :=),
% \end{align*}
% where $ HereThen the following holds true:

\end{theorem}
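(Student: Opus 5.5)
The plan follows the geometric scheme outlined in the overview, in four steps.

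\textbf{Step 1: inner graphs.} Fix $j$ and work with the two one-dimensional manifolds $\mathbf{W}^{\sigma^j}(\mathbf q^j(\epsilon))$ and $\mathbf{W}^{\sigma^{j+1}}(\mathbf q^{j+1}(\epsilon))$. Using the fast formulation \eqref{x2y2z2fast}, I write each manifold as a graph $(y_2,z_2)=m_2^l(x_2,\epsilon)$ over a large compact set $\mathcal X^l\subset\mathbb C$. This set lies in the basin of $q^l$ for $x_2'=Q(x_2)$, by Lemma \ref{lem:realtime}. The graphs are constructed by a fixed-point/invariance argument along the reduced flow. Since the layer problem \eqref{layer} is elliptic, I would remove the rotation with a near-identity averaging transformation $y_2+iz_2\mapsto \e^{-i\tau}(\cdot)$, solve for the slaved part, and get $m_2^l=\mathcal O(\epsilon^{\kappa-1})$, analytic in $x_2$. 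The set $\mathcal X^l$ is chosen to contain an $\mathcal O(1)$ piece of $\mathcal H^j$ near $p^j$ together with the direction towards $\breve h^j$ at infinity.

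\textbf{Step 2: extension to $x=\mathcal O(1)$.} I pass to the chart $\breve x=1$ of the blowup induced by \eqref{xyzx2y2z21}, with coordinates $x=r_1$, $(y,z)=r_1^\kappa(y_1,z_1)$, $\epsilon=r_1\epsilon_1$. Here $\epsilon_1=0$ is the system \eqref{y1z1r1}, a generalized saddle-node of Poincar\'e rank $\kappa-1$. I would build a normal form in which the center-like manifold is straightened. Following \cite{ksum} and \cite{bkt}, I use formal series in $r_1$ with $\epsilon_1$-dependent coefficients, and separately formal series in $\epsilon_1$ with $r_1$-dependent coefficients.

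Flowing $m_2^l$ out along the sectors $S^l$ (reached from $\mathcal X^l$ through the $\breve e^l$ directions of Lemma \ref{lem:realtime}) then gives the following. For $x\in S^l$ with $|x|$ small but fixed, the manifold equals $x^\kappa(\psi^l(x)+o(1))$ as $\epsilon\to0$. On $S(\breve h^j,\delta,\chi)$ this yields an $\mathcal O(1)$ difference $x^\kappa(\psi^{j+1}-\psi^j)(x)\neq0$, which is the hypothesis of the theorem. In $x_2$ variables this is a difference of order $\epsilon^{-\kappa}x^\kappa\Delta\psi$ at $x_2=\epsilon^{-1}x$.

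\textbf{Step 3: the difference equation along $\mathcal H^j$.} I subtract the two invariance equations and apply the mean value theorem. This gives a linear equation for $(\Delta y_2,\Delta z_2)=m_2^{j+1}-m_2^j$, parametrized by $x_2$ moving along $\mathcal H^j$, that is, by $s$ with $x_2'=iQ(x_2)$. In the variable $w=\Delta y_2+i\Delta z_2$ (and its companion), the fast eigenvalue $\pm i$ combines with the imaginary time to give real hyperbolicity. The system becomes slow-fast with a normally hyperbolic saddle-type critical manifold.

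I would put it in Fenichel normal form, diagonalizing up to $C^\infty$ terms. The relevant component then satisfies
\[
\frac{dw}{ds}=\Big(-\epsilon^{1-\kappa}+\tfrac12 Q'(x_2(s))\cdot c+\mathcal O(\epsilon^{\kappa-1})\Big)w
\]
with explicit $c$. The other component is exponentially dominated and is controlled by real-analyticity and conjugation symmetry (Lemma \ref{lem:imagtime}(\ref{it:imag1})).

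\textbf{Step 4: integration to $p^j$ and the main obstacle.} I integrate from $x_2=p^j$ ($s=0$) to $|x_2|=\delta/\epsilon$, at which $s=T^j-\mathcal O(\epsilon^{\kappa-1})$ by the blowup asymptotics of Corollary \ref{cor:blowuptime}. The exponent produces $\e^{-\epsilon^{1-\kappa}T^j}$. The $Q'$-term integrates to a power of $|x_2|$, which combined with the $\epsilon^{-\kappa}x^\kappa$ size from Step 2 produces the prefactor $\epsilon^{-3\kappa/2}$. The remaining terms form a smooth $\Xi^j(\epsilon)$ whose limit encodes $\log$ of the Stokes-type constant from $\psi^{j+1}-\psi^j$. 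Summing the contributions from both ends of $\mathcal H^j$ (related by conjugation) gives the real-valued separation.

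I expect the main obstacle to be Step 3/4. The Fenichel normal form must be made uniform all the way to the singular end $|x_2|\sim\delta/\epsilon$, matching the chart-1 description of Step 2. For that matching I would use the local normal form \eqref{w1normalform2} near $\breve h^j$.
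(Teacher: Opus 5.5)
Your route is the paper's: graphs over compact $\mathcal X^l$ (Proposition \ref{prop:Wuspj2}), continuation in the $\breve x=1$ chart to the unperturbed manifolds (Propositions \ref{prop:Hn} and \ref{prop:WlO1}), and the linear difference equation along $\mathcal H^j$ in imaginary time as a saddle-type slow--fast system. This is followed by a Fenichel diagonalization globalized near $\breve h^j$, integration to $p^j$, and the conjugation/time-reversal symmetry to combine the two ends. The paper globalizes with the blow-up $x=\rho_1\e^{i\theta}$, $\epsilon=\rho_1w_1$, $\theta=H^j(w_1)$. Reading off the exponent as the $s$-time to $|x_2|=\delta/\epsilon$, instead of the residue evaluation of $\mathcal I^j$ in Lemma \ref{lem:final}, is a harmless variant given \eqref{sumQkj}.

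\textbf{Missing step.} You never show that the leading coefficient is nonzero, and this is what makes $\Xi^j$ finite at $\epsilon=0$.
\begin{itemize}
\item The hypothesis $\psi^{j+1}\neq\psi^j$ only says that the vector $x^\kappa(\psi^{j+1}-\psi^j)(x)$ at $|x|=\rho_1$ near $\breve h^j$ is nonzero.
\item What enters the formula is its component along the single eigendirection you integrate, namely the one that is recessive when going inward to $p^j$. If that component vanished at $\epsilon=0$, you would have $\e^{\Xi^j(0)}=0$ and the claimed form would fail.
\item Your remark that the other component is ``exponentially dominated'' is exactly the fact needed. Real-analyticity and the conjugation symmetry do not give it, because the symmetry only exchanges one component at one end with the conjugate of the other component at the opposite end.
\item The paper closes this in Section~\ref{sec:complete}. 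If $\Delta s^j(x^j(0),0)=0$, then $\Delta u^j(x^j(0),0)\neq0$. Integrating that component inward makes the difference at $\epsilon p^j$ exponentially large, contradicting the uniform bound of Proposition \ref{prop:Wuspj2}.
\item Alternatively, $\psi^{j+1}-\psi^j$ is exponentially small as $x\to0$ in the overlap sector, since both share the same Gevrey asymptotics. It therefore lies in the recessive line bundle.
\end{itemize}

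\textbf{Sign error in Step 3.} With $x_2'=iQ(x_2)$ and $w=\Delta y_2+i\Delta z_2$, the layer equation $\dot w=-iw$ gives $dw/ds=\bigl(\epsilon^{1-\kappa}-\tfrac{i}{2}Q'(x_2)+\cdots\bigr)w$. With your $-\epsilon^{1-\kappa}$, integrating between $s=0$ and $s\approx T^j$ gives $w(p^j)=\e^{+\epsilon^{1-\kappa}T^j}\cdots w(\text{end})$. That is amplification, not the damping you claim. The correct sign is precisely what makes $w$ the recessive component.

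\textbf{Two smaller corrections.}
\begin{itemize}
\item The remainder is not uniformly $\mathcal O(\epsilon^{\kappa-1})$ up to $|x_2|\sim\delta/\epsilon$. It is $\epsilon^{\kappa-1}\mathcal O(|x_2|^{2(\kappa-1)})$, whose $s$-integral is $\mathcal O(1)$ and contributes to $\Xi^j(0)$.
\item For $\Xi^j\in C^\infty([0,\epsilon_0))$ you need $C^\infty$ dependence on $\epsilon$ of the data at $|x|=\rho_1$, as in Proposition \ref{prop:WlO1}, not just the $o(1)$ you state. You also need the $C^n$-for-every-$n$ argument combined with uniqueness of the manifolds.
\end{itemize}
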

% \begin{remark}
% Through elementary calculations based on \eqref{finaldist}, we find that the distance $d^j(\epsilon):=\vert (\Delta y^j(p^j,\epsilon),\Delta z^j(p^j,\epsilon)\vert$, takes the following form
%  \begin{align*}
%  d^j(\epsilon) =\epsilon^{-\frac{3\kappa}{2}}\e^{-\epsilon^{1-\kappa} T^j}
%  \left(1+\mathcal O(\epsilon^{2(\kappa-1)})\right),\quad \operatorname{Im}(\Xi_0^j(0)) = 0.
%  \end{align*}
%   Here $\mathcal O(\epsilon^{2(\kappa-1)})$ is a $C^\infty$-smooth function of $\epsilon$.
% \end{remark}

		\begin{remark}\label{rem:phin}
		Theorem \ref{thm:main} agrees with Theorem \ref{thm:baldom2013a} for $\kappa=2$ with $Q(f)=-f^2+1$ and $b=1$ since $p^1=0$ and $T^1=\frac{\pi}2$ in this case, see also \remref{rem_michaelson}. The result also agrees with \eqref{michaelson} since
% 		Theorem \ref{thm:main} agrees with Theorem \ref{thm:baldom2013a} for $\kappa=2$ with $Q(f)=-f^2+1$ and $b=1$. The result also agrees with \eqref{michaelson} since $p_1=0$ and $T^1=\frac{\pi}2$ in this case, see \remref{rem_michaelson},  and
		\begin{align*}
		 z_{2}^1-z_{2}^2 = \epsilon \frac{\partial^2 f^+}{\partial t^2}-\epsilon \frac{\partial^2 f^-}{\partial t^2},
		\end{align*}
		for $f=0$,
cf. \appref{A}, see \eqref{x2y2z2f} and \eqref{nft} with $\kappa=2$ and $Q(f)=-f^2+1$.

	 To understand how Theorem \ref{thm:main} applies to \eqref{3rd}, we first recall \eqref{PsiPhi} and \eqref{fseries}. Then in Fig. \ref{fig:test} we illustrate $\log \vert \phi_\alpha\vert^{\frac{1}{\alpha}}$ as a function of $\log \alpha$ for $\alpha=1,\ldots,80$ and $\kappa=2,\ldots,8$. Here $\phi_\alpha$, $\alpha\in \mathbb N$, are the coefficients in \eqref{fseries}, which we determine recursively in Maple. The result in Fig. \ref{fig:test} provides strong evidence for the factorial growth of $\vert \phi_\alpha\vert$ as $\alpha\rightarrow \infty$ (and in turn evidence for the divergence of \eqref{fseries}) for these values of $\kappa$. This implies that $\phi^2\ne \phi^1$ on the overlapping domain and then in turn (by \eqref{diffPsi}) that Theorem \ref{thm:main} applies to all connections $j\in \{1,\ldots,\kappa-1\}$.
				\begin{figure*}[t!]
		\centering
		{\includegraphics[width=0.67\textwidth]{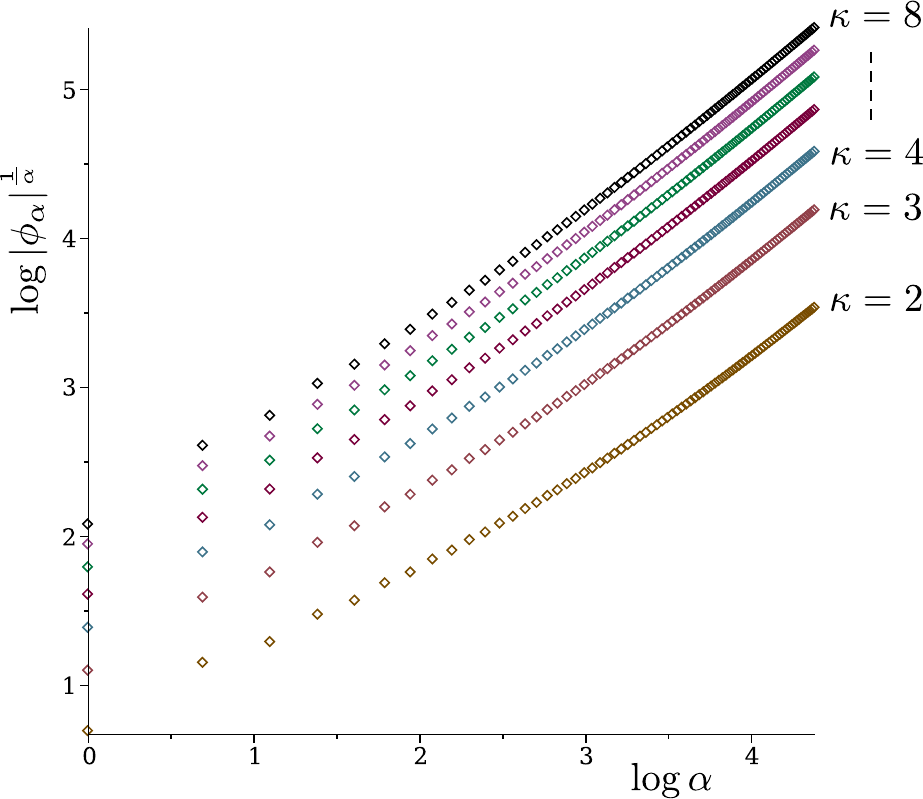}}
				\caption{$\log \vert \phi_\alpha\vert^{\frac{1}{\alpha}}$ as a function of $\log \alpha$ for $\kappa=2,\ldots,8$. Here $\phi_\alpha$, $\alpha\in \mathbb N$, is the asymptotic series \eqref{fseries} for \eqref{y1z1s}.  }
		\label{fig:test}
	\end{figure*}

	\end{remark}

\begin{remark}
The unperturbed manifolds are given by Borel-Laplace. The linear part of the equation for $\mathcal B(\psi_0)$ have poles at $w\,:\,w^{\kappa-1}\pm i=0$, see \cite[Eq. (37)]{ksum} and this is the general case for the Borel transform of $\psi_0$ too. In this case, $\psi^{j+1}\ne \psi^j$ for all $j$.

As emphasized in \cite[Remark 1.4]{bkt}, our approach does not require us to assume that  $\psi^{j+1}\ne \psi^j$. In fact,
\begin{equation}
\begin{aligned}
\Delta y_2^j(p^j,\epsilon) +i \Delta z_2^j(p^j,\epsilon) &= \epsilon^{-\frac{3\kappa}{2}}\e^{-\epsilon^{1-\kappa} T^j}\widetilde \Xi^j(\epsilon), %
% \epsilon^{-\frac{3\kappa}{2}}\exp\left(-\epsilon^{1-\kappa} \Xi^j(\epsilon)\right),
\end{aligned}
\end{equation}
would be a more general statement for the splitting, with $\widetilde \Xi^j(0)\ne 0$ if $\psi^{j+1}\ne \psi^j$. This is a consequence of Lemma \ref{lem:final}. Similarly,
\begin{align*}
    \begin{cases}\frac{\partial^{\alpha}\widetilde \Xi^j}{\partial \epsilon^\alpha}(0)=0,\quad \forall\,\alpha\in \{0,1\ldots,\beta-1\},\\
     \frac{\partial^{\beta}\widetilde \Xi^j}{\partial \epsilon^\beta}(0)\ne 0,
    \end{cases}
    \end{align*}
 where $\beta\in \mathbb N$ is the smallest number for which the higher order ``corrections'' in Lemma \ref{lem:Pnl} of the unperturbed manifolds satisfy $\psi_{1,\beta}^{j+}\ne \psi_{1,\beta}^{j}$. We leave out further details.
\end{remark}

% 	It is possible to determine $\Xi^j(\epsilon)$ explicitly up to terms including $\mathcal O(\epsilon^{\kappa})$, but we leave the details of this to the interested reader.
	\subsection{Strategy of the proof}

Our strategy follows \cite{bkt}. The idea is to extend the stable and unstable manifolds of $\mathbf q^j(\epsilon)$, that are naturally described in the $(x_2,y_2,z_2)$-coordinates, to $x=\mathcal O(1)\in \mathbb C$, $\vert x\vert>0$, and compare them there with the unperturbed manifolds of Lemma \ref{lem:Sj} (along the appropriate sectors). To perform this extension we use blow-up and view the scaling \eqref{xyzx2y2z21} as a chart ($\breve \epsilon=1$) associated with the blow-up transformation
\begin{align}\label{eq:bu}
 r\ge 0,\,(\breve x,\breve y,\breve z,\breve \epsilon)\in \mathbb S^3\mapsto \begin{cases}
                                                       x = r \breve x,\\
                                                       y =r^\kappa \breve y,\\
                                                       z =r^\kappa \breve z,\\
                                                       \epsilon =r\breve \epsilon.
                                                      \end{cases}
                                                      \end{align}
As usual in blow-up analysis, the matching between $x=\mathcal O(1)$ and $x=\mathcal O(\epsilon)$ is therefore performed in the $\breve x=1$-chart with chart-specific coordinates $(r_1,y_1,z_1,\epsilon_1)$ defined by
\begin{align}\label{eq:hatx1}
 \begin{cases}
  x =r_1,\\
  y =r_1^\kappa y_1,\\
  z =r_1^\kappa z_1,\\
  \epsilon =r_1 \epsilon_1.
 \end{cases}
\end{align}
We used the $(r_1,y_1,z_1)$-coordinates in the proof of Lemma \ref{lem:Sj} (within $\epsilon_1=0$).
% \begin{remark}\label{rem:ksum2}
%  We emphasize, that {under \assumptionref{ass2} (in particular the property \eqref{hatX}), Lemma \ref{lem:Sj} also holds true for the system \eqref{tildexuv}.}
% % \begin{remark}
% Notice also that for \eqref{XYZ}, \eqref{hatX} can actually be replaced with the following stronger result: The functions
%  \begin{align}
% \widetilde r_1^{-3k+2} \widetilde X(\widetilde r_1,\widetilde r_1^k \widetilde y_1,\widetilde r_1^k \widetilde z_1,\widetilde \epsilon_1),\quad X=R,S,T,\nonumber %\label{eq:hatX}
% \end{align}
% are each analytic functions with respect to $(\widetilde r_1^{k-1},\widetilde y_1, \widetilde z_1,\widetilde \epsilon_1)$ (in a neighborhood of $(0,0,0,0)$). This means that in the $\breve x=1$-chart the system can be written as an analytic system with respect to $(r_1^{k-1},y_1,z_1,\epsilon_1)$.  However, we will not impose this simplifying property in \assumptionref{ass2} on our generalized system \eqref{tildexuv}. In fact, as already mentioned in Remark \ref{rem:ksum}, the condition \eqref{hatX} in \assumptionref{ass2} (for $\epsilon_1=0$) is also not essential (we can use \cite{ksum} instead for the unperturbed manifolds). We include the condition \eqref{hatX} nevertheless since then the main result, Theorem \ref{thm:main}, only requires one single nondegeneracy condition. Similarly, condition \eqref{xprop} is also not essential but primarily included for simplicity.
% % \end{remark}
% \end{remark}
Notice that the $\breve \epsilon=1$-chart with chart-specific coordinates $(x_2,y_2,z_2,r_2)$, defined by
\begin{align*}
 \begin{cases}
  x =r_2x_2,\\
  y =r_2^\kappa y_2,\\
  z =r_2^\kappa z_2,\\
  \epsilon =r_2,
 \end{cases}
\end{align*}
corresponds to \eqref{xyzx2y2z21} upon eliminating $r_2$. There is a diffeomorphic change of coordinates between the $\breve x=1$-chart and the $\breve \epsilon=1$-chart for $x_2\ne 0$, defined by
\begin{align}\label{eq:cc}
 \begin{cases}
  r_1 =r_2 x_2,\\
  y_1 =x_2^{-\kappa} y_2,\\
  z_1 =x_2^{-\kappa} z_2,\\
  \epsilon_1 =x_2^{-1}.
 \end{cases}
\end{align}
In its most common form, blow-up is used to gain hyperbolicity of nilpotent singularities. In such situations, desingularization, through division of the pull-back vector-field by some power of $r$, plays a crucial role. In the present work, $(x,y,z)=(0,0,0)$ is not nilpotent for $\epsilon=0$ and we will not use desingularization. Our eigenvalues will be  $0,\pm i$ before and after blowup. (This suggest that our approach can also be used for maps.)

%
% \section{Scalar polynomial vector fields in real and imaginary time}
% In this section, we consider
% \begin{align}\label{eq:realtime}
%  \dot x_2 &={Q}(x_2),
% \end{align}
% and
% \begin{align}\label{eq:imagtime}
%  \dot x_2 &=i {Q}(x_2),
% \end{align}
% with $\dot{()}=\frac{d}{ds}$ with $s\in \mathbb R$. These two systems can be understood as the reduced problem \eqref{reduced}, repeated here for convinience
% \begin{align*}
%  \frac{dx_2}{dt} ={Q}(x_2),
% \end{align*}
% in real time $t=s\in \mathbb R$ and imaginary time $t=is\in i \mathbb R$, respectively.
%
\begin{remark}
% \note{where does this go?}
In our proof of Theorem \ref{thm:main}, we will make use of trajectories of \eqref{realtime} and \eqref{imagtime}. In particular, regular trajectories of \eqref{realtime} and \eqref{imagtime} will constitute so-called elliptic (Stokes) respectively hyperbolic (anti-Stokes) integration curves, see \cite{Berry1991,dingle1973a,hayes2016a}. To see this, let $x_2(s)$ be a solution of \eqref{imagtime}. Then if we ignore the higher terms in \eqref{x2y2z2slow}, we find that
\begin{equation}\label{eq:hyp}
\begin{aligned}
 \frac{dy_2}{ds}  &=  i\left( \epsilon^{1-\kappa}z_2 -  \frac12 y_2 {Q}'(x_2(s))\right),\\
 \frac{dz_2}{ds}  &= i\left( -\epsilon^{1-\kappa}y_2 -  \frac12 z_2  {Q}'(x_2(s))\right),
\end{aligned}
\end{equation}
which can be integrated:
% with the general solution
\begin{align}\label{eq:hypsol}
 \begin{pmatrix}
  y_2\\
  z_2
 \end{pmatrix}(s) = \left(\frac{{Q}(x_{2}(0)) }{{Q}(x_{2}(s))}\right)^{\frac12} \exp(i\epsilon^{1-\kappa} \Omega s)\begin{pmatrix}
  y_2\\
  z_2
 \end{pmatrix}(0),
\end{align}
where
\begin{align*}
 \Omega = \begin{pmatrix}
           0 & 1\\
           -1 & 0
          \end{pmatrix},\quad \exp(i\Omega s) =  \begin{pmatrix}
  \frac12 (\e^s+\e^{-s}) & \frac{i}{2}(\e^s-\e^{-s})\\
   -\frac{i}{2}(\e^s-\e^{-s}) & \frac12 (\e^s+\e^{-s})
 \end{pmatrix}.
\end{align*}
Consequently, \eqref{hyp} has
 exponentially decaying and exponentially growing solutions as $\epsilon\rightarrow 0$ for $s\ne 0$. %\fbox{need to correct this}
 To obtain the ${Q}$-dependent factor of the matrix exponential in \eqref{hypsol}, we have used that
\begin{align}\label{eq:logQk}
 \int \frac{{Q}'(x_2)}{{Q}(x_2)}dx_2 = \log {Q}(x_2).
\end{align}

Similarly, if $x_2(s)$ is a solution of \eqref{realtime}, then
\begin{equation}\label{eq:ell}
\begin{aligned}
 \frac{dy_2}{ds}  &=   \epsilon^{1-\kappa}z_2 -  \frac12 y_2 {Q}'(x_2(s)),\\
 \frac{dz_2}{ds}  &=  -\epsilon^{1-\kappa}y_2 -  \frac12 z_2  {Q}'(x_2(s)),
\end{aligned}
\end{equation}
have solutions with fast oscillations as $\epsilon\rightarrow 0$.
\end{remark}

    \section{Stable and unstable manifolds in the $\breve \epsilon=1$-chart}\label{sec:exit}
In this section, we consider \eqref{x2y2z2slow}, repeated here for convenience
\begin{equation}\label{eq:x2y2z2new2}
\begin{aligned}
 { x}_2'  &={Q}(x_2)+\epsilon^{2(\kappa-1)} F_2( x_2, y_2, z_2,\epsilon),\\%\epsilon^{\kappa-1} \sum_{n=0}^{\lfloor \frac{\kappa}{2}\rfloor} \frac{1}{2^{2n} n!^2}{Q}^{(2n)}( x_2) ((\epsilon^{\kappa-1} y)^2+(\epsilon^{\kappa-1} z)^2)^n+\epsilon^{2(\kappa-1)}\widetilde R_{2\kappa-1,2},\\
 \epsilon^{\kappa-1}  { y}_2'  &=  z_2 -  \frac12 y_2 \epsilon^{\kappa-1} {Q}'(x_2) +\epsilon^{2(\kappa-1)} G_2( x_2, y_2, z_2,\epsilon),\\
 \epsilon^{\kappa-1}  z_2'  &= - y_2 -  \frac12 z_2 \epsilon^{\kappa-1} {Q}'(x_2) +\epsilon^{2(\kappa-1)} H_2( x_2, y_2, z_2,\epsilon).
%   \dot{ y}_2  &=  z_2 -{ y_2} \epsilon^{\kappa-1}\sum_{n=0}^{\lfloor \frac{\kappa-1}{2}\rfloor } \frac{1}{2^{2n+1}(n+1)! n!}{Q}^{(2n+1)}((\epsilon^{\kappa-1} y)^2+(\epsilon^{\kappa-1} z)^2)^n+\epsilon^{\kappa-1}\widetilde S_{2\kappa-1,2},\\
%   \dot{ z}_2  &= - y_2 -{ z}_2\epsilon^{\kappa-1}\sum_{n=0}^{\lfloor \frac{\kappa-1}{2}\rfloor } \frac{1}{2^{2n+1}(n+1)! n!}{Q}^{(2n+1)}((\epsilon^{\kappa-1} y)^2+(\epsilon^{\kappa-1} z)^2)^n+\epsilon^{\kappa-1} \widetilde T_{2\kappa-1,2},
%   \dot \epsilon &=0.
\end{aligned}
\end{equation}
By \assumptionref{ass1}, $q^j$, $j\in \{1,\ldots,\kappa\}$, define $\kappa$-many real hyperbolic singularities $\mathbf{q}^j(\epsilon)$ of \eqref{x2y2z2new2} for all $0<\epsilon\ll 1$, see
Lemma \ref{lem:Wuspj}. Recall also that
the singularity $\mathbf{q}^j(\epsilon)$ of \eqref{x2y2z2new2} has a (complex) one-dimensional invariant manifold
 \begin{align}\label{eq:Wsu}
 \mathbf{W}^{\sigma^j}(\mathbf{q}^j(\epsilon))\quad \mbox{where}\quad
 \begin{cases} 
  \sigma^j=s & \mbox{ if $j$ is odd},\\
 \sigma^j = u & \mbox{ if $j$ is even},
 \end{cases}
 \end{align}
 for all $0<\epsilon\ll 1$. In the following, we will provide a description of these manifolds as graphs $(y_2,z_2)=m_2^j(x_2,\epsilon)$ over $x_2$. These manifolds are solutions of the invariance equation:
\begin{align}\label{eq:invman0}
 \left(\Omega -\frac12 \epsilon^{\kappa-1} Q'(x_2)\operatorname{Id}\right)\begin{pmatrix}
                                       y_2\\
                                       z_2
                                      \end{pmatrix}+\epsilon^{2(\kappa-1)} \begin{pmatrix}
                                      G_2\\
                                      H_2\end{pmatrix}=\epsilon^{\kappa-1} \left(Q(x_2)+\epsilon^{2(\kappa-1)} F_2 \right) \frac{d}{dx_2}\begin{pmatrix}
                                       y_2\\
                                       z_2
                                      \end{pmatrix},
\end{align}
where $W_2=W_2(x_2,y_2,z_2,\epsilon)$, $W=F,G,H$, and
\begin{align*}
\operatorname{Id}=\operatorname{diag}(1,1)\in \mathbb C^{2\times 2}.
\end{align*}
Now, we recall the following:
\begin{lemma}\label{lem:formal}
There exists a unique formal series solution of \eqref{invman0} of the form
\begin{align}\label{eq:m2n}
  (y_2,z_2) = \sum_{\alpha=2(\kappa-1)}^\infty  m_{2,\alpha}( x_2)\epsilon^\alpha,
\end{align}
with $ m_{2,\alpha}:\mathbb C\rightarrow \mathbb C^2$ real-analytic for all $\alpha\in \mathbb N$, $\alpha\ge 2(\kappa-1)$.
\end{lemma}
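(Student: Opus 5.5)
We insert the ansatz \eqref{m2n} into \eqref{invman0} and collect powers of $\epsilon$. Write $m:=(y_2,z_2)$ and expand $\epsilon^{2(\kappa-1)}(G_2,H_2)(x_2,m,\epsilon)$ and $\epsilon^{2(\kappa-1)}F_2(x_2,m,\epsilon)$ in Taylor series with respect to $(m,\epsilon)$; these are analytic by \eqref{Xcond}, with coefficients that are polynomials in $x_2$ because $W_2(x_2,y_2,z_2,\epsilon)=\epsilon^{-3\kappa+2}W(\epsilon x_2,\epsilon^\kappa y_2,\epsilon^\kappa z_2,\epsilon)$. The key point is that $\Omega$ is invertible, with $\Omega^{-1}=-\Omega$. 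Hence the equation at order $\epsilon^\alpha$ takes the form
\begin{align*}
\Omega\, m_{2,\alpha}(x_2) = R_\alpha(x_2),
\end{align*}
where $R_\alpha$ depends only on $m_{2,\beta}$ with $\beta<\alpha$ and on their derivatives. The right-hand side of \eqref{invman0} carries the factor $\epsilon^{\kappa-1}$, and the term $\frac12\epsilon^{\kappa-1}Q'(x_2)m$ also shifts order by $\kappa-1\ge1$. The nonlinear terms in $m$ inside $G_2,H_2$ come with the prefactor $\epsilon^{2(\kappa-1)}$ and can therefore only involve lower-order coefficients.

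Concretely, the steps are as follows.
\begin{enumerate}
\item For $\alpha<2(\kappa-1)$, every term in $R_\alpha$ vanishes, because the only inhomogeneity is $\epsilon^{2(\kappa-1)}(G_2,H_2)$. This forces $m_{2,\alpha}=0$ and justifies starting the sum at $2(\kappa-1)$.
\item At $\alpha=2(\kappa-1)$ we get $m_{2,2(\kappa-1)}(x_2)=-\Omega^{-1}(G_2,H_2)(x_2,0,0,0)$.
\item Inductively, for $\alpha>2(\kappa-1)$, we set
\begin{align*}
m_{2,\alpha}=\Omega^{-1}\Bigl[\,\bigl(Q\,\tfrac{d}{dx_2}m\bigr)_{\alpha-\kappa+1}+\tfrac12 Q'\,m_{2,\alpha-\kappa+1}-\bigl[G_2,H_2\bigr]_{\alpha}+\bigl[F_2\,\tfrac{d}{dx_2}m\bigr]_{\alpha}\Bigr],
\end{align*}
where $[\cdot]_\alpha$ denotes the $\epsilon^\alpha$-coefficient of the substituted expression.
\end{enumerate}
Every operation involved (differentiation, multiplication by polynomials and by entire coefficients, and composition of the convergent Taylor expansion with lower-order terms) preserves real-analyticity; in fact it preserves entireness in $x_2$ wherever the $W_2$-coefficients are entire. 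Real-analyticity in the sense of conjugation equivariance follows because $\Omega$, $Q$, $F_2,G_2,H_2$ are all real, so each recursion step commutes with $x_2\mapsto\overline{x_2}$.

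Uniqueness is immediate: at each order the coefficient is determined by the invertible linear equation, so any two formal solutions coincide order by order.

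The main obstacle is bookkeeping the domain of the coefficients. The functions $F_2,G_2,H_2$ are only locally defined near $\epsilon=0$, but their Taylor coefficients in $(y_2,z_2,\epsilon)$ are polynomial in $x_2$, by homogeneity of the expansion of $W$ in the weighted variables implied by \eqref{Xcond}. This is what yields $m_{2,\alpha}:\mathbb C\to\mathbb C^2$. I would verify this by expanding $W$ in monomials $x^a y^b z^c \epsilon^d$ and noting that, after the scaling, each monomial contributes $x_2^a$ times a fixed power of $\epsilon$, with only finitely many monomials contributing at each $\epsilon$-order. No divergence issues arise, since the statement is only about a formal series.
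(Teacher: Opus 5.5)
Your proof is correct, but it is organized differently from the paper's. The paper does not run the recursion explicitly. It notes that for every $\varrho>0$ the rescaled functions $F_2,G_2,H_2$ are real-analytic on $B_\varrho^3\times(-\epsilon_0,\epsilon_0)$ with $\epsilon_0=\epsilon_0(\varrho)$. It then applies the general existence/uniqueness result for formal invariant manifolds \cite[Proposition 2.1]{de2020a} on each such ball, obtaining coefficients analytic on every $B_\varrho$, hence on $\mathbb C$ by uniqueness.

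You instead solve the order-$\epsilon^\alpha$ equations by hand using $\Omega^{-1}=-\Omega$. You dispose of the domain issue through the scaling structure: since $W_2=\epsilon^{-3\kappa+2}W(\epsilon x_2,\epsilon^\kappa y_2,\epsilon^\kappa z_2,\epsilon)$, each $\epsilon^n$-coefficient of $W_2$ is a polynomial in $(x_2,y_2,z_2)$. So every order involves only finitely many monomials, and no $\varrho$-dependent bookkeeping is needed.

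The paper's route is shorter and rests on a general theorem for analytic slow--fast systems. Yours is self-contained and gives more: each $m_{2,\alpha}$ is a real polynomial, and tracking degrees through your recursion yields $\deg m_{2,\alpha}\le\alpha+\kappa$. That degree bound is exactly what makes $m_{1,\alpha}(\epsilon_1)=\epsilon_1^{\kappa+\alpha}m_{2,\alpha}(\epsilon_1^{-1})$ in Lemma~\ref{lem:Phi0l} analytic at $\epsilon_1=0$.
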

\begin{proof}
 We follow \cite[Lemma 3.2]{bkt}: Since $W=W(x,y,z,\epsilon),W=F,G,H,$ are assumed to be real-analytic on $(x,y,z,\epsilon)\in \mathcal B_\xi^4\subset \mathbb C^4$, for some $\xi>0$ small enough, we have that for any $\varrho>0$, there is an $\epsilon_0>0$ such that  $F_2$, $G_2$ and $H_2$ are real-analytic on $(x_2,y_2,z_2,\epsilon) \in B_\varrho^3 \times (-\epsilon_0,\epsilon_0)$, recall \eqref{W2defn}. But then \cite[Proposition 2.1]{de2020a} gives the existence and uniqueness of the formal series \eqref{m2n} with $ m_{2,\alpha}:B_\varrho \rightarrow \mathbb C^2$ real-analytic for all $\alpha\in \mathbb N$, $\alpha\ge 2(\kappa-1)$, and any $\varrho>0$. The fact that the series starts with $\alpha=2(\kappa-1)$ follows directly from \eqref{invman0}. %The result therefore follows.
\end{proof}
 We now fix any compact neighborhood $\mathcal X^j\subset \mathbb C $ of $q^j\in \mathbb C$ such that $\mathcal X^j$ for $j$ even (odd, respectively) is a subset of the basin of attraction of $q^j$ for the forward (backward, respectively) flow of the reduced problem $x_2'=Q(x_2)$ (associated with \eqref{x2y2z2new2} on $(y_2,z_2)=(0,0)$).

%  and that the flow of
% \begin{align}
%  \frac{dx_2}{ds_P} = \frac{{Q}(x_2)}{1+\vert x_2\vert^{\kappa-1}},\quad x_2\in \mathbb C,\label{eq:realtime_p}
% \end{align}
% is complete. We let $\phi_{s_P}:\mathbb C\rightarrow \mathbb C$, $s_P\in \mathbb R$, denote the flow. %(Due to the division by the nonanalytic function $1+\vert x_2\vert^{\kappa-1}$, this flow is not analytic. However, this will not be important here.)
% The following result provides a description of the local versions of \eqref{Wsu} in compact domains of the complex $x_2$-plane.
\begin{proposition}\label{prop:Wuspj2}
%  Fix any $\tau>0$, any $j\in \{1,\ldots,\kappa\}$ and consider a small complex neighborhood $U_j$ of $q^j$ so that $q^l\notin U_j$ for all $l\ne j$.
%  Let $\mathcal X^j$ denote the
%  \begin{align*}
%   \begin{cases}
%    \mbox{backward flow of $U_j$ if $j$ is odd},\\
%    \mbox{forward flow of $U_j$ if $j$ is even},
%   \end{cases}
%  \end{align*}
%  see \figref{realtime} (a) for an illustration,
% under the finite time flow
% \begin{align*}
% \phi_{s_P},\quad s_P\in \begin{cases} [-\tau,0] & \mbox{ if $j$ is odd},\\
%                                         [0,\tau] & \mbox{ if $j$ is even},
%                    \end{cases}
%                    \end{align*}
%  of \eqref{realtime_p}. Then
Consider any $j\in \{1,\ldots,\kappa\}$. Then $\mathbf{W}_{loc}^{\sigma^j}(\mathbf{q}^j(\epsilon))$, with $\sigma^j$ given by \eqref{Wsu}, is a graph over $x_2\in \mathcal X^j$:
 \begin{align}\label{eq:m2jgraph}
 \begin{pmatrix}
  y_2\\
  z_2 
 \end{pmatrix}=m^j_2(x_2,\epsilon),\quad x_2\in \mathcal X^j,
 \end{align}
 for all $\epsilon\in [0,\epsilon_0)$, $\epsilon_0>0$ sufficiently small.
Here $m^j_2(\cdot,\epsilon)\,:\,\mathcal X^j\rightarrow \mathbb C^2$ is analytic for all $\epsilon\in [0,\epsilon_0)$, being $C^\infty$ with respect $\epsilon$. In particular, $m^j_2$ has an asymptotic expansion:
\begin{align}\label{eq:En2}
 m^j_2(x_2,\epsilon) \sim \sum_{\alpha=2(\kappa-1)}^\infty m_{2,\alpha}(x_2) \epsilon^\alpha\quad \mbox{for}\quad \epsilon\rightarrow 0,
\end{align}
for $x_2\in \mathcal X^j$, where each $m_{2,\alpha}:\mathbb C\rightarrow \mathbb C^2$, $\alpha\in\mathbb N_0$, is real-analytic and independent of $j$. %and $m_{0,2} = (0,0)$.
\end{proposition}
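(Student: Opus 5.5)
The proof will follow the strategy of \cite[Proposition 3.3]{bkt}, adapted to the present slow-fast scaling. The idea is to truncate the formal series of Lemma \ref{lem:formal}, put the system into a normal form in which the truncation error is pushed to arbitrarily high order in $\epsilon$, and then obtain the invariant manifold as a fixed point of an integral equation along trajectories of the reduced flow $x_2'=Q(x_2)$. By the reversibility $(t,j\text{ odd})\leftrightarrow(-t,j\text{ even})$ it suffices to treat $j$ even, where $\mathbf{W}^{u}(\mathbf q^j(\epsilon))$ is one-dimensional and $\mathcal X^j$ lies in the basin of attraction of $q^j$ for the forward flow. Odd $j$ follows by reversing time.

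\textbf{Step 1 (near-identity change of variables).} Fix $N\in\mathbb N$ large and let $M_N(x_2,\epsilon):=\sum_{\alpha=2(\kappa-1)}^{N}m_{2,\alpha}(x_2)\epsilon^\alpha$. This is real-analytic on a large ball $B_\varrho\supset\mathcal X^j$ by Lemma \ref{lem:formal}. Set $(y_2,z_2)=M_N(x_2,\epsilon)+(u,v)$. In the fast time the system becomes
\begin{align*}
\dot x_2&=\epsilon^{\kappa-1}\bigl(Q(x_2)+\mathcal O(\epsilon^{2(\kappa-1)})\bigr),\\
\begin{pmatrix}\dot u\\ \dot v\end{pmatrix}&=\Bigl(\Omega-\tfrac12\epsilon^{\kappa-1}Q'(x_2)\operatorname{Id}+\mathcal O(\epsilon^{2(\kappa-1)})\Bigr)\begin{pmatrix}u\\ v\end{pmatrix}+\mathcal O(\epsilon^{N+1}),
\end{align*}
uniformly for $x_2\in B_\varrho$ and $(u,v)$ in a fixed ball. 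The $\mathcal O(\epsilon^{N+1})$ remainder is the residual of the truncated invariance equation \eqref{invman0}.

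\textbf{Step 2 (fixed-point formulation).} We seek the manifold as a graph $(u,v)=\mu(x_2,\epsilon)$ over the set $\mathcal X^j$. Since $\mathcal X^j$ is compact and contained in the basin of $q^j$, it can be enlarged to a compact, forward-invariant neighbourhood $\widetilde{\mathcal X}^j$. Every backward orbit of the reduced flow starting in $\widetilde{\mathcal X}^j$ converges to $q^j$ as $t\to-\infty$. We use variation of constants along the reduced flow in backward slow time $t\in(-\infty,0]$. Two facts control the linear part:
\begin{enumerate}
\item The rotational part $\exp(\epsilon^{1-\kappa}\Omega t)$ has norm bounded by $1$ in the real directions.
\item The weight $\bigl(Q(x_2(0))/Q(x_2(t))\bigr)^{1/2}$, as in \eqref{hypsol}, is bounded and tends to $0$ as the orbit approaches $q^j$. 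Near $q^j$ the rate $\tfrac12 Q'(q^j)$ is dominated by the unstable rate $Q'(q^j)>0$ of the $x_2$-direction.
\end{enumerate}
Here the orbits $x_2(t)$ are complex, so "real directions" must be interpreted carefully. To handle this, we restrict to complex time paths along which $t$ stays real, using that $\mathcal X^j$ lies in the real-time basin.

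Because the $(u,v)$-dynamics are weakly contracting relative to the $x_2$-direction, i.e.\ the stable-to-unstable eigenvalue ratio is $-1/2$, the standard Lyapunov–Perron/graph transform argument gives a contraction. The map is defined on the Banach space of functions analytic in $x_2\in\operatorname{int}\widetilde{\mathcal X}^j$ with sup-norm weighted by $\epsilon^{-N}$, and its contraction constant is $\mathcal O(\epsilon^{\kappa-1})$ after integrating the $\mathcal O(\epsilon^{2(\kappa-1)})$ perturbations over slow time. The forcing is $\mathcal O(\epsilon^{N+1})$, and integrating it over slow time with the $\epsilon^{1-\kappa}$ factor produces $\mathcal O(\epsilon^{N+2-\kappa})$. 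Hence $\mu=\mathcal O(\epsilon^{N+2-\kappa})$.

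\textbf{Step 3 (identification and smoothness).} Near $q^j$ the fixed point coincides with the analytic local unstable manifold of the hyperbolic saddle-focus $\mathbf q^j(\epsilon)$ from Lemma \ref{lem:Wuspj}, by uniqueness of graphs tangent to the unstable eigendirection. It is therefore $\mathbf W^{u}_{loc}$, and $m_2^j=M_N+\mu$. Analyticity in $x_2$ holds because the fixed-point space consists of analytic functions. The bound $\mu=\mathcal O(\epsilon^{N+2-\kappa})$ holds for every $N$, and for different $N$ the resulting graphs coincide by uniqueness. Together these give the asymptotic expansion \eqref{En2}, with coefficients independent of $j$. $C^\infty$-smoothness in $\epsilon$ at $\epsilon=0$ follows as in \cite{bkt}: differentiating the fixed-point equation with respect to $\epsilon$ costs powers $\epsilon^{-(\kappa-1)}$ from the fast rotation, and these are absorbed by taking $N$ larger.

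\textbf{Main obstacle.} The hardest step is the uniform estimate in Step 2. The linearisation has the fast elliptic part $\epsilon^{1-\kappa}\Omega$, and for complex $x_2$ the characteristic paths $x_2(t)$ are complex. We must ensure that along the chosen time path no exponential growth of the form $\exp(\epsilon^{1-\kappa}|\operatorname{Im} t|)$ appears. The first thing I would try is to integrate only along real slow time, which is possible precisely because $\mathcal X^j$ is chosen in the real-time basin, and to use the Gronwall-type bound $\|\exp(\epsilon^{1-\kappa}\Omega t)\|=1$ for $t\in\mathbb R$.
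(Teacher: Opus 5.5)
Your overall route is the paper's. You subtract a truncation of the formal series of Lemma \ref{lem:formal} and solve a Lyapunov--Perron fixed point along \emph{real}-time orbits of $x_2'=Q(x_2)$ inside the basin of $q^j$, so that $\exp(\epsilon^{1-\kappa}\Omega t)$ is a rotation and only a scalar factor matters. You then obtain \eqref{En2} and the $C^\infty$-dependence on $\epsilon$ from the arbitrariness of $N$ and uniqueness.

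However, Step 2 does not work as written. You do variation of constants along the \emph{reduced} flow, but the $x_2$-equation in \eqref{x2y2z2new2} is $x_2'=Q(x_2)+\epsilon^{2(\kappa-1)}F_2(x_2,y_2,z_2,\epsilon)$, which depends on the unknown graph. Your base orbits converge to $q^j$, yet on your graph the true $x_2$-velocity at $x_2=q^j$ is $\epsilon^{2(\kappa-1)}F_2(q^j,y_2,z_2,\epsilon)$. This is generically nonzero; indeed $\mathbf q^j(\epsilon)$ sits at $x_2=q^j+\mathcal O(\epsilon^{2(\kappa-1)})$. So your fixed point is an invariant graph of a modified system, and the identification with $\mathbf W^u_{loc}(\mathbf q^j(\epsilon))$ in Step 3 fails.

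The paper handles this with a device your proposal lacks. It shifts to $\mathbf q^j(\epsilon)$ and works with $\zeta_2=\widetilde x_2^{-1}(\widetilde y_2,\widetilde z_2)$. The $x_2$-component of the vector field then factors as $\widetilde x_2(\widetilde Q^j(\widetilde x_2,\epsilon)+\mathcal O(\epsilon^{2(\kappa-1)}))$. Since $\widetilde Q^j(\widetilde x_2,0)=\widetilde x_2^{-1}Q(q^j+\widetilde x_2)$ does not vanish on $\mathcal X^j$ (it equals $Q'(q^j)$ at $\widetilde x_2=0$), the whole field can be divided by a nonvanishing factor $1+\mathcal O(\epsilon^{2(\kappa-1)})$. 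This division has three effects:
\begin{enumerate}
\item Invariant graphs are unchanged, and the induced small correction to $\Omega$ is absorbed into $\epsilon^{\kappa-1}W_{21}^j$ and $W_{20}^j$.
\item The base flow $\widetilde x_2'=\widetilde Q^j(\widetilde x_2,\epsilon)\widetilde x_2$ in \eqref{x2zeta2} becomes exactly independent of $\zeta_2$.
\item As a bonus, the normal rate becomes $-\frac32 Q'(q^j)$, cf. \eqref{cond}.
\end{enumerate}
Alternatively, you can keep your coordinates and make $x_2(t)$ part of the unknown in the Lyapunov--Perron map. This closes because $\epsilon^{1-\kappa}\Omega$ does not depend on $x_2$, so an $\mathcal O(\epsilon^{2(\kappa-1)})$ change of the base orbit produces no $\epsilon^{1-\kappa}$-size phase error.

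There are also two smaller slips.
\begin{enumerate}
\item For $j$ even, $q^j$ is a repelling node (Lemma \ref{lem:realtime}); the sentence introducing $\mathcal X^j$ has the parities swapped. So $\mathcal X^j$ lies in the basin of the \emph{backward} flow, and your enlargement must be backward-invariant, not forward-invariant.
\item The factor that decays is $(Q(x_2(s))/Q(x_2(t)))^{1/2}$ with $s\le t$, as $s\to-\infty$. The factor you wrote, $(Q(x_2(0))/Q(x_2(t)))^{1/2}$, blows up as $x_2(t)\to q^j$.
\end{enumerate}
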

\begin{proof}
% The existence of the asymptotic series \eqref{En2} is standard, see e.g. \cite{de2020a}.
The proof follows \cite[Proposition 3.3]{bkt}.
We fix $j$ odd so that $Q'(q^j)<0$ and so that $x_2=q^j$ is asymptotically stable for the reduced problem:
\begin{align*}
 x_2' = Q(x_2),\quad x_2\in \mathbb C.
\end{align*}
We will therefore describe $\mathbf{W}^s_{loc}(\mathbf q^j(\epsilon))$ as the graph \eqref{m2jgraph}.
The case with $j$ even (and $\mathbf{W}^u_{loc}(\mathbf q^j(\epsilon))$) can be studied in the same way upon changing the direction of time. We then write
\begin{align*}
 \begin{pmatrix}
  x_2\\
  y_2\\
  z_2
 \end{pmatrix} =: \mathbf{q}^j(\epsilon) + \begin{pmatrix}
  \widetilde x_2\\
  \widetilde y_2\\
  \widetilde z_2
 \end{pmatrix},
\end{align*}
and define
\begin{align*}
\zeta_2:= \widetilde x^{-1} \begin{pmatrix}
                          \widetilde y_2\\
                          \widetilde z_2
                         \end{pmatrix}.
                         \end{align*}
A simple calculation shows that this brings \eqref{x2y2z2new2} into the following form
\begin{equation}\label{eq:x2zeta2}
\begin{aligned}
 \widetilde x_2' &= \widetilde Q^j(\widetilde x_2,\epsilon) \widetilde x_2,\\
 \epsilon^{\kappa-1} \zeta_2' &=\left(\Omega + \epsilon^{\kappa-1} W_{21}^j(\widetilde x_2,\epsilon)\right) \widetilde \zeta_2 + \epsilon^{2(\kappa-1)} W_{20}^j(\widetilde x_2, \zeta_2,\epsilon),
\end{aligned}
\end{equation}
where
\begin{align*}
\Omega = \begin{pmatrix}
          0 & 1\\
          -1 & 0
         \end{pmatrix},
 \end{align*}
after dividing the right hand side by a quantity $1+\mathcal O(\epsilon^{2(\kappa-1)})$ which is nonzero and real-analytic for all $q^j+\widetilde x_2\in \mathcal X^j$, $\vert  \zeta_2\vert \le c$, with $c>0$ fixed, and all $0\le\epsilon\ll 1$. Here we have used that $Q(x_2)\ne 0$ for all $x_2\in \mathcal X^j\setminus \{q^j\}$ and that $Q'(q^j)\ne 0$. All quantities are real-analytic. In particular,
\begin{align*}
 \widetilde Q^j(\widetilde x_2,0) = \widetilde x_2^{-1} Q(q^j+\widetilde x_2)\rightarrow Q'(q^j)\quad \mbox{for}\quad \widetilde x_2\to 0,
\end{align*}
and
\begin{align*}
 W_{21}^j(\widetilde x_2,0) =
\left(-\frac12 Q'(q^j+\widetilde x_2) -\widetilde x_2^{-1} Q(q^j+\widetilde x_2)\right) \operatorname{Id},\quad \operatorname{Id}=\operatorname{diag}(1,1)\in \mathbb C^{2\times 2};
\end{align*}
specifically
\begin{align}\label{eq:cond}
 W_{21}^j(0,0) = -\frac32 Q'(q^j)\operatorname{Id}.
\end{align}
The eigenvalues of $W_{21}^j(0,0)$ are therefore strictly positive by assumption.

% We now drop the tildes on $\zeta_2$, $B_j$ and $W_j$.

As in Lemma \ref{lem:formal}, we have a unique formal series solution of \eqref{x2zeta2} of the form
\begin{align}\label{eq:zeta2n}
  \zeta_2 = \sum_{\alpha=2(\kappa-1)}^\infty \zeta_{2,\alpha}(\widetilde x_2)\epsilon^\alpha.
\end{align}
We therefore introduce ${\widetilde \zeta}_2$ by
\begin{align*}
 \zeta_2 = \sum_{\alpha=2(\kappa-1)}^{2N} \zeta_{2,\alpha}(\widetilde x_2)\epsilon^\alpha+\epsilon^N \widetilde{ \zeta}_2,
\end{align*}
with  $N\ge \kappa-1$. This brings the equations into the form
\begin{equation}\label{eq:x2zeta2f}
\begin{aligned}
 \widetilde x_2' &= \widetilde Q^j(\widetilde x_2,\epsilon) \widetilde x_2,\\
 \epsilon^{\kappa-1} \widetilde{\zeta}_2' &=\left(\Omega + \epsilon^{\kappa-1} \widetilde W_{21}^{j,N} (\widetilde x_2,\epsilon)\right) \widetilde \zeta_2 + \epsilon^{N} {\widetilde W}_{20}^{j,N}(\widetilde x_2,\widetilde{ \zeta}_2,\epsilon),
\end{aligned}
\end{equation}
with $\widetilde W_{21}^{j,N} (0,0)=-\frac32 Q'(q^j)\operatorname{Id}$ for any $N\ge \kappa-1$. This follows from a simple calculation.

We will prove the statement by working on \eqref{x2zeta2f}. Define $\widetilde{\mathcal X}^j$ as follows: $\widetilde x_2\in \widetilde{\mathcal X}^j$ if and only if $q^j+\widetilde x_2\in \mathcal X^j$. %We therefore take $\widetilde x_{20}$ such that $\widetilde x_{20}\in \widetilde{\mathcal X}^j$.
Then, since $\mathcal X^j$ is compact and $j$ is odd, the solution $\widetilde x_2(t,\widetilde x_{20},\epsilon)$ of the initial value problem
$$\widetilde x_2'=\widetilde Q^j(\widetilde x_2,\epsilon)\widetilde x_2,\quad \widetilde x_2(0)=\widetilde x_{20}\in \widetilde{\mathcal X}^j,$$ satisfies $\widetilde x_2(t,\widetilde x_{20},\epsilon)\rightarrow 0$ for $t\rightarrow \infty$, uniformly for $\tilde x_{20}\in \widetilde{\mathcal X}^j$, $0\le \epsilon\ll 1$. We then solve for $\widetilde \zeta_{2}(t,\widetilde x_{20},\epsilon)$ such that $(\widetilde x_{20},\widetilde \zeta_{2}(0,\widetilde x_{20},\epsilon))$ is a point on the stable manifold through the usual fixed-point formulation:
\begin{align*}
\widetilde \zeta_2(t,\widetilde x_{20},\epsilon) = -\int_t^\infty \widetilde{\mathcal M}_2^{j,N}(t,\widetilde x_{20},\epsilon))\widetilde{\mathcal M}_2^{j,N}(s,\widetilde x_{20},\epsilon)^{-1} \epsilon^N \widetilde W_{20}^{j,N}(\widetilde x_2(s,\widetilde x_{20},\epsilon), \widetilde \zeta_{2}(s,\widetilde x_{20},\epsilon),\epsilon) ds,
%  \epsilon^{k-1} \widetilde{\zeta}_2' &=\left(\Omega + \epsilon^{k-1} {\widetilde B}_j(\widetilde x_2(t,\widetilde x_{20}),\epsilon)\right) \widetilde \zeta_2 + \epsilon^{N} {\widetilde W}_j(\widetilde x_2(t,\widetilde x_{20}),\widetilde{ \zeta}_2,\epsilon),\quad \widetilde z_2(0)=\widetilde \zeta_{20},
\end{align*}
for $t\ge 0$,
see e.g. \cite[Theorem 5.3]{meiss2007a}. Here $\widetilde{\mathcal M}_2^{j,N}(t,\widetilde x_{20},\epsilon)$ is the fundamental matrix associated with the linear problem
\begin{align*}
 \epsilon^{\kappa-1} \widetilde{\zeta}_2' &=\left(\Omega + \epsilon^{\kappa-1} {\widetilde W}_{21}^{j,N}(\widetilde x_2(t,\widetilde x_{20},\epsilon),\epsilon)\right) \widetilde \zeta_2,
\end{align*}
satisfying $\widetilde{\mathcal M}_2^{j,N}(0,\widetilde x_{20},\epsilon)=\operatorname{Id}$.
In fact, using \eqref{cond}, we easily obtain the following bound
\begin{align*}
 \Vert \widetilde{\mathcal M}_2^{j,N}(t,\widetilde x_{20},\epsilon)\widetilde{\mathcal M}_2^{j,N}(s,\widetilde x_{20},\epsilon)^{-1}\Vert \le c_1^{-1} \e^{c_2 (t-s)},
\end{align*}
for $c_1>0,c_2>0$ both small enough, for any $s\ge t$, $q^j+\widetilde x_2\in \mathcal X^j$, and all $0<\epsilon\ll 1$, see also \cite[Lemma 3.2]{bkt} (based upon \cite[Proposition 1, p. 34]{coppel1978a}).
The existence of a bounded solution $t\mapsto \widetilde \zeta_{2}(t,\widetilde x_{20},\epsilon), t\ge 0$, then proceeds completely analogously to \cite[Theorem 5.3]{meiss2007a} for $0<\epsilon\ll 1$. In particular, the desired graph is given by $\widetilde \zeta_2=\widetilde \zeta_{2}(0,\widetilde x_{2},\epsilon)$ over $q^j+\widetilde x_2\in \mathcal X^j$ for all $0\le \epsilon\ll 1$.
The remaining statements, including the existence of the asymptotic series \eqref{En2}, follows from \eqref{zeta2n}, the arbitrariness of $N$ and the uniqueness and smoothness of $\mathbf{W}^s_{loc}(\mathbf q^j(\epsilon))$ for $\epsilon\in (0,\epsilon_0)$, see \cite[Proposition 3.3]{bkt}.

\end{proof}
An important consequence of Proposition \ref{prop:Wuspj2} is that we have control of $\mathbf{W}_{loc}^{\sigma^j}(\mathbf{q}^j(\epsilon))$ along the elliptic paths $\mathcal E^l$ that connect to $q^j$ (within compact sets of the complex $x_2$-plane), recall Lemma \ref{lem:realtime} and see \figref{realtime}. These paths are invariant manifolds of hyperbolic singularities $\breve e^l\in \mathbb S^1$ at infinity $x_2=\infty$, see Lemma \ref{lem:realtime} (assertion \ref{it:real6}). But $\breve e^l$ also define directions (see \eqref{ehatlangles}) of $x$ upon which the domain of the unperturbed manifolds $(y,z)=x^\kappa\psi^l(x)$, $x\in S^l$,  are centered, recall Lemma \ref{lem:Sj}. However, $x$ and $x_2$ are related by $x=\epsilon x_2$ and to connect the two regimes $x=\mathcal O(\epsilon)$ and $x=\mathcal O(1)$ we use the directional chart $\breve x=1$, see \eqref{hatx1}.

    \section{Stable and unstable manifolds in the $\breve x=1$-chart}\label{sec:exitt}
We consider the extended system defined by \eqref{XYZ} and $\dot \epsilon=0$ and apply the change of coordinates $(r_1,y_1,z_1,\epsilon_1)\mapsto (x,y,z,\epsilon)$ given by \eqref{hatx1}. This gives
\begin{equation}\label{eq:K1eqns}
\begin{aligned}
 \dot r_1 &= r_1^\kappa P(1,\epsilon_1) ,\\
 \dot y_1 &=z_1 - \frac12 r_1^{\kappa-1}{{P}'_x(1,\epsilon_1)} y_1-\kappa r_1^{\kappa-1} P(1,\epsilon_1)y_1 + r_1^{2(\kappa-1)} G_1(r_1,y_1,z_1,\epsilon_1),\\
 \dot z_1 &=-y_1- \frac12 r_1^{\kappa-1}{{P}'_x(1,\epsilon_1)} z_1-\kappa r_1^{\kappa-1}P(1,\epsilon_1) z_1 + r_1^{2(\kappa-1)} H_1(r_1,y_1,z_1,\epsilon_1),\\
 \dot \epsilon_1 &=- r_1^{\kappa-1}\epsilon_1P(1,\epsilon_1) ,
%  \dot r_1 &= -r_1^\kappa ,\\
%  \dot y_1 &=-\frac{z_1}{P(1,\epsilon_1)} + \frac12 r_1^{\kappa-1}\frac{{P}'_x(1,\epsilon_1)}{P(1,\epsilon_1)} y_1+k r_1^{k-1}y_1 + r_1^{2(k-1)} G_1(r_1,y_1,z_1,\epsilon_1),\\
%  \dot z_1 &=\frac{y_1}{P(1,\epsilon_1)} + \frac12 r_1^{k-1}\frac{{P}'_x(1,\epsilon_1)}{P(1,\epsilon_1)} z_1+k r_1^{k-1}z_1 + r_1^{2(k-1)} H_1(r_1,y_1,z_1,\epsilon_1),\\
%  \dot \epsilon_1 &=r_1^{k-1}\epsilon_1,
%   \dot z_1 &=-y_1 - \frac12 r_1^{\kappa -1}{P}'_x(1,\epsilon_1)z_1-\kappa  r_1^{\kappa -1} P(1,\epsilon_1) z_1 + r_1^{2(\kappa -1)} \widetilde H_1(r_1,y_1,z_1,\epsilon_1),
\end{aligned}
\end{equation}
% \note{replace $Z_1()$ by $Z_1(r_1,r_1^{\kappa -1}y_1,r_1^{\kappa -1}z_1,\epsilon_1)$, $Z=S,T$}
where $G_1$ and $H_1$ are locally defined real-analytic functions. Here we have used \eqref{homoP}:
\begin{equation}\label{eq:Pk1}
\begin{aligned}
 P(1,\epsilon_1)& = r_1^{-\kappa } {P}(r_1,r_1\epsilon_1) = -1+\sum_{\alpha=0}^{\kappa -1} \epsilon_1^{\kappa -\alpha} a_\alpha,\\
 {P}'_x(1,\epsilon_1)&=r_1^{-\kappa +1} {P}_x'(r_1,r_1\epsilon_1) = -\kappa +\sum_{\alpha=1}^{\kappa -1} \epsilon_1^{\kappa -\alpha} a_\alpha \alpha.
\end{aligned}
\end{equation}
Since we are only interested in invariant manifold solutions, as graphs over $r_1$ and $\epsilon_1$ with $(r_1,\epsilon_1)\in B_\delta^2\subset \mathbb C^2$, we have in the derivation of \eqref{K1eqns} also divided the right hand side by $$1+r_1^{2(\kappa -1)} P_1(1,\epsilon_1)^{-1} F_1(r_1,y_1,z_1,\epsilon_1),$$ which is nonzero and real-analytic for all $\vert r_1\vert$ and $\vert \epsilon_1\vert$ small enough. We have also redefined $G_1$ and $H_1$, recall \eqref{R1S1T1}. %We use $B_\delta^2\subset \mathbb C^2$ to denote the ball of radius $\delta>0$ centered at the origin.
 
We now seek to normalize \eqref{K1eqns}. For this purpose, we first look for two separate formal solutions; formal here referring to formal series. Let $\zeta_1=(y_1,z_1)^{\operatorname{T}}$, then invariant manifold solutions $\zeta_1=\zeta_1(r_1,\epsilon_1)$ of \eqref{K1eqns} satisfy a PDE of the form:
\begin{align}\label{eq:pdew}
 -r_1^\kappa \frac{\partial \zeta_1}{\partial r_1}  +r_1^{\kappa-1}\epsilon_1 \frac{\partial \zeta_1}{\partial \epsilon_1}= \Omega \zeta_1 + r_1^{\kappa -1}R_1(r_1,\zeta_1,\epsilon_1),
\end{align}
where $\Omega=\begin{pmatrix} 0&1\\
          -1 &0
         \end{pmatrix}$, $R_1(r_1,0,\epsilon_1)=\mathcal O(r_1^{\kappa -1})$. Here $R_1=R_1(r_1,\zeta_1,\epsilon_1)\in \mathbb C^2$ is locally well-defined and real-analytic, and can obviously be expressed in terms of $P$, $G_1$ and $H_1$, but the details of this will not be important.

\begin{lemma}\label{lem:En1}
There exists a unique formal series solution of \eqref{pdew} of the form
\begin{align}\label{eq:formal1}
 \zeta_1 = \sum_{\alpha=2(\kappa -1)}^\infty m_{1,\alpha}(\epsilon_1)r_1^\alpha,
\end{align}
where each $m_{1,\alpha}:B_\delta\rightarrow \mathbb C^2$ is real-analytic for $\delta>0$ small enough.
\end{lemma}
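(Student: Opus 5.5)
Following the proof of Lemma \ref{lem:formal} (and \cite[Lemma 4.1]{bkt}), I plan to obtain \eqref{formal1} by direct substitution into \eqref{pdew} and collecting powers of $r_1$, checking that each coefficient is determined by an algebraic, not differential, equation in $\epsilon_1$. Write $\zeta_1=\sum_{\alpha} m_{1,\alpha}(\epsilon_1) r_1^\alpha$ and insert it into \eqref{pdew}. The term $-r_1^\kappa\partial_{r_1}\zeta_1$ contributes $-(\alpha-\kappa+1)m_{1,\alpha-\kappa+1}r_1^{\alpha}$ at order $r_1^\alpha$. The term $r_1^{\kappa-1}\epsilon_1\partial_{\epsilon_1}\zeta_1$ contributes $\epsilon_1 m_{1,\alpha-\kappa+1}'(\epsilon_1) r_1^\alpha$. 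Since $R_1$ is analytic, $r_1^{\kappa-1}R_1(r_1,\zeta_1,\epsilon_1)$ expands at order $r_1^\alpha$ into a polynomial expression in $m_{1,\beta}$ with $\beta\le \alpha-\kappa+1$, with coefficients analytic in $\epsilon_1$.

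Because every derivative and nonlinear term carries the shift $r_1^{\kappa-1}$, the order-$r_1^\alpha$ equation reads
\begin{align*}
\Omega\, m_{1,\alpha}(\epsilon_1) = \mathcal F_\alpha\bigl(\epsilon_1; m_{1,\beta}, m_{1,\beta}',\ \beta\le \alpha-\kappa+1\bigr).
\end{align*}
$\Omega$ is invertible with $\Omega^{-1}=-\Omega$, so $m_{1,\alpha}=-\Omega\mathcal F_\alpha$ is uniquely determined recursively. For $\alpha<\kappa-1$ the right-hand side vanishes, which gives $m_{1,\alpha}=0$. At the first orders, the inhomogeneity comes only from $r_1^{\kappa-1}R_1(r_1,0,\epsilon_1)=\mathcal O(r_1^{2(\kappa-1)})$, so $m_{1,\alpha}=0$ for all $\alpha<2(\kappa-1)$ by induction. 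This explains why the series starts at $\alpha=2(\kappa-1)$, with $m_{1,2(\kappa-1)}=-\Omega\, [r_1^{\kappa-1}]R_1(\cdot,0,\epsilon_1)$.

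For regularity, I will argue by induction that each $m_{1,\alpha}$ is analytic on a fixed ball $B_\delta$, where $\delta$ is chosen so that $R_1$ is analytic on $B_\delta\times B_\delta^2\times B_\delta$. The recursion only uses multiplication, composition with analytic coefficients, and differentiation in $\epsilon_1$, and analyticity on the same open ball $B_\delta$ is preserved under differentiation. Hence there is no loss of domain at the level of individual coefficients; we need no uniform bounds since the series is only formal. Real-analyticity follows because $R_1$ is real-analytic and $\Omega$ is real: by uniqueness, $\overline{m_{1,\alpha}(\overline{\epsilon_1})}$ satisfies the same recursion and so equals $m_{1,\alpha}(\epsilon_1)$.

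The only point needing care is the bookkeeping that the $\epsilon_1\partial_{\epsilon_1}$ term genuinely comes with the factor $r_1^{\kappa-1}$. Otherwise the recursion would be a differential equation in $\epsilon_1$ and uniqueness could fail. From \eqref{pdew} this shift is explicit, so I expect no real obstacle; uniqueness is immediate from the invertibility of $\Omega$.
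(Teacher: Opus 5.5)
Your proposal is correct and follows the paper's proof. The paper rewrites \eqref{pdew} as $\zeta_1 = r_1^{\kappa-1}\Omega^{-1}(\cdots)$ and runs the coefficient recursion of \cite[Proposition 2.1]{de2020a}; this is your order-by-order algebraic recursion, made possible by the $r_1^{\kappa-1}$ factor, with the start at $\alpha=2(\kappa-1)$ coming from $R_1(r_1,0,\epsilon_1)=\mathcal O(r_1^{\kappa-1})$. One harmless sign slip: the $R_1$-term sits on the same side of \eqref{pdew} as $\Omega\zeta_1$, and $\Omega^{-1}=-\Omega$, so the leading coefficient is $m_{1,2(\kappa-1)}=+\Omega\,[r_1^{\kappa-1}]R_1(\cdot,0,\epsilon_1)$.
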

\begin{proof}
 We write  \eqref{pdew} in the form
 \begin{align*}
  \zeta_1 = r_1^{\kappa -1} \Omega^{-1}\left(-r_1 \frac{\partial \zeta_1}{\partial r_1} +\epsilon_1 \frac{\partial \zeta}{\partial \epsilon_1} - R_1(r_1,\zeta_1,\epsilon_1)\right).
 \end{align*}
The result can then be proven as the proof of \cite[Proposition 2.1]{de2020a} (by the virtue of the $r_1^{\kappa -1}$-factor on the right hand side). The sum in \eqref{formal1} starts with $\alpha=2(\kappa -1)$ since $R_1(r_1,0,\epsilon_1)=\mathcal O(r_1^{\kappa -1})$.

\end{proof}

\begin{lemma}\label{lem:Pnl}
Fix $\chi>0$ and $\delta>0$ small enough and recall that $\breve e^l=\frac{\pi}{\kappa -1}(l-1)$. Then for any $l\in \{1,\ldots,2(\kappa-1)\}$ there exists a formal series solution of \eqref{pdew} of the form
\begin{align}\label{eq:wepsexp}
 \zeta_1 = \sum_{\alpha=0}^\infty \psi_{1,\alpha}^l(r_1)\epsilon_1^\alpha,
\end{align}
where each $\psi_{1,\alpha}^l:S^l\rightarrow \mathbb C^2$ is analytic on the local sector $$S^{l}=S(\breve e^l,\delta,\eta),\quad \eta=\frac{\pi}{\kappa -1}+\chi,$$ recall \eqref{Sector}.

Moreover, for every fixed $\alpha\in \mathbb N_0$, $\psi_{1,\alpha}^l$, $l\in\{1,\ldots,2(\kappa -1)\}$, is the $(\kappa-1)$-sum in the direction $\breve e^l$, respectively, of a Gevrey-$\frac{1}{\kappa-1} $ series
\begin{align}
 \psi_{1,\alpha}^l(r_1) \sim_{\frac{1}{\kappa-1}} \sum_{\beta=2(\kappa -1)}^{\infty} \psi_{1,\alpha,\beta}r_1^\beta.\label{eq:psinl}
\end{align}
In particular, the series is independent of $l$.

% whereas $E_n:B(\delta_n)\rightarrow \mathbb  R$ will be real-analytic in a full neighborhood $B(\delta_n)$ of $\epsilon_1=0$. 

\end{lemma}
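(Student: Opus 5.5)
We will construct the coefficients $\psi_{1,\alpha}^l$ recursively in $\alpha$ by inserting \eqref{wepsexp} into \eqref{pdew} and collecting powers of $\epsilon_1$. We Taylor expand $R_1(r_1,\zeta_1,\epsilon_1)$ in $\epsilon_1$; this is legitimate since $R_1$ is real-analytic near $(0,0,0)$.

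\textbf{Order $\epsilon_1^0$.} Here we obtain
\begin{align*}
-r_1^\kappa \frac{d\psi_{1,0}^l}{dr_1} = \Omega\psi_{1,0}^l + r_1^{\kappa-1}R_1(r_1,\psi_{1,0}^l,0),
\end{align*}
which is exactly the invariance equation \eqref{y1z1r1} of the generalized saddle-node within $\epsilon_1=0$. This holds because the $\epsilon_1\partial_{\epsilon_1}$ term and all $\epsilon_1$-corrections drop out, and \eqref{K1eqns} with $\epsilon_1=0$ is \eqref{XYZ0} written in the coordinates \eqref{blowup0}, up to the time rescaling we already divided out. We therefore take $\psi_{1,0}^l:=\psi^l$ from Lemma \ref{lem:Sj}. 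This gives analyticity on $S^l$, the $(\kappa-1)$-summability in direction $\breve e^l$, and an $l$-independent Gevrey-$\frac{1}{\kappa-1}$ series. That the series starts at $\beta=2(\kappa-1)$ follows from $R_1(r_1,0,0)=\mathcal O(r_1^{\kappa-1})$, as in Lemma \ref{lem:En1}. It is also consistent with Lemma \ref{lem:En1} evaluated at $\epsilon_1=0$.

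\textbf{Order $\epsilon_1^\alpha$, $\alpha\ge1$.} We obtain a linear inhomogeneous equation
\begin{align*}
-r_1^\kappa \frac{d\psi_{1,\alpha}^l}{dr_1} = \left(\Omega - \alpha r_1^{\kappa-1}\operatorname{Id} + r_1^{\kappa-1}D_\zeta R_1(r_1,\psi_{1,0}^l,0)\right)\psi_{1,\alpha}^l + r_1^{\kappa-1}\Phi_\alpha^l(r_1).
\end{align*}
The inhomogeneity $\Phi_\alpha^l$ is a polynomial expression in $\psi_{1,0}^l,\ldots,\psi_{1,\alpha-1}^l$, with coefficients given by derivatives of $R_1$ evaluated along $(r_1,\psi_{1,0}^l(r_1),0)$. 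By induction, $\Phi_\alpha^l$ is analytic on $S^l$ and is the $(\kappa-1)$-sum in direction $\breve e^l$ of an $l$-independent Gevrey-$\frac{1}{\kappa-1}$ series. This uses the algebra property of $k$-summable functions (closure under products and under composition with analytic functions) from multisummability theory.

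The linear system has the same leading part as the saddle-node: the linearization has eigenvalues $\pm i$ at $r_1=0$ and Poincar\'e rank $\kappa-1$. The same nonresonance/sector condition used in the proof of Lemma \ref{lem:Sj}, namely $\breve e^l\ne\breve h^m$, therefore applies. To solve it, we append the equation $\dot u=0$ to the system, treat it as a parameter-free generalized saddle-node in the variables $(\zeta,u)$, and again invoke \cite[Theorem 2.1]{ksum}. Alternatively, we can work directly at the level of formal series, Borel transform and Laplace transform along $\breve e^l$.

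\textbf{Formal part and summability.} On the formal side, the equation for the coefficients $\psi_{1,\alpha,\beta}$ has the form
\begin{align*}
\Omega\psi_{1,\alpha,\beta} = (\text{terms involving lower }\beta)\cdot\beta,
\end{align*}
so it is uniquely solvable because $\Omega$ is invertible. This yields a unique formal solution, hence independent of $l$, with Gevrey-$\frac{1}{\kappa-1}$ bounds obtained by the standard majorant argument. Its $(\kappa-1)$-sum in direction $\breve e^l$ is an actual solution on $S(\breve e^l,\delta,\eta)$, and we define $\psi_{1,\alpha}^l$ to be this sum. The $\epsilon_1$-dependence never enters the small-divisor or sector structure; it only contributes the diagonal shift $-\alpha r_1^{\kappa-1}$, which is of lower order.

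\textbf{Main obstacle.} The hard step is verifying uniformly that the inhomogeneous linear problem at each order admits a solution on the full sector of opening $\frac{\pi}{\kappa-1}+\chi$, with radius $\delta$ independent of $\alpha$. This needs the Borel transform of the linear operator to have singularities only on the rays $w^{\kappa-1}=\pm i$, avoiding the direction $\breve e^l$. Each $\psi_{1,\alpha}^l$ may require a smaller $\delta$ in principle, but the statement only asks for each fixed $\alpha$, so a shrinking $\delta$ is acceptable if needed.
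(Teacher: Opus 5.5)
The recursion in your proposal matches the paper: the $\epsilon_1^0$ equation is identified with Lemma \ref{lem:Sj}, and for $\alpha\ge1$ you get linear equations with the shift $-\alpha r_1^{\kappa-1}\operatorname{Id}$ and a summable inhomogeneity. The gap is in your last paragraph. You read the lemma as allowing the radius to shrink with $\alpha$, but the statement fixes $\delta$ first. It then asserts that \emph{every} $\psi^l_{1,\alpha}$ is analytic on the same sector $S^l=S(\breve e^l,\delta,\eta)$; only the summability assertion is per-$\alpha$. The same fixed sector is what Section~\ref{sec:exitt} uses: the classes $\mathcal G(S^l)$, and $W_1^{l,N}$ in Proposition \ref{prop:Hn} for arbitrary $N$. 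Your diagnosis of what would be needed (Borel singularities only on $w^{\kappa-1}=\pm i$) also concerns the admissible direction and opening, not the radius, so it would not give uniformity anyway.

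The missing step is elementary, and it is how the paper argues. Summability theory (the paper cites \cite[Corollary 4.2]{ksum}) gives $\psi^l_{1,\alpha}$ on some $S(\breve e^l,\delta_\alpha,\eta)$. Assume inductively that $\psi^l_{1,0},\ldots,\psi^l_{1,\alpha-1}$ are analytic on $S(\breve e^l,\delta_0,\eta)$, with $\delta_0$ from Lemma \ref{lem:Sj}. Divide the order-$\alpha$ equation by $-r_1^\kappa$. The result is a \emph{linear} ODE whose coefficient $D_\zeta R_1(r_1,\psi^l_{1,0}(r_1),0)$ and whose inhomogeneity are analytic on all of $S(\breve e^l,\delta_0,\eta)$, and it is nonsingular for $r_1\neq0$. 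Linear equations have no movable singularities and the sector is simply connected. Hence the local solution continues single-valuedly to $S(\breve e^l,\delta_0,\eta)$, and it remains the $(\kappa-1)$-sum of the same series, since summability only concerns the germ at $r_1=0$. So $\delta=\delta_0$ works for every $\alpha$.

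A smaller point: appending $\dot u=0$ and invoking \cite[Theorem 2.1]{ksum} does not work as described. It adds a zero eigenvalue, so the formal invariant manifold is no longer unique, and it does not turn the merely summable $\Phi^l_\alpha$ into analytic data. Any of the following would be a legitimate replacement:
\begin{itemize}
\item your Borel--Laplace alternative;
\item the paper's \cite[Corollary 4.2]{ksum};
\item treating the triangular system for $(\psi_{1,0},\ldots,\psi_{1,\alpha})$ jointly as one analytic generalized saddle-node with linear part $\operatorname{diag}(\Omega,\ldots,\Omega)$.
\end{itemize}
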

\begin{proof}
% We first write $ \zeta = \sum_{n=0}^\infty \psi_{\alpha}(r)\epsilon^\alpha$ as a formal series with $r_1$-dependent coefficients.
% \end{align}
By proceeding as in \cite[Lemma 4.3]{bkt}, we obtain the following equations for $\psi_\alpha$, $\alpha\in \mathbb N_0$:
\begin{align}
 -r_1^{\kappa } \psi_{1,0}' = \Omega \psi_{1,0}+r_1^{\kappa -1} R_1(r_1,\psi_{1,0},0),\label{eq:phi0eqn}
\end{align}
and
\begin{equation}\label{eq:phin1eqn}
\begin{aligned}
 -r_1^{\kappa } \psi_{1,\alpha}' &= \Omega\psi_{1,\alpha} +r_1^{\kappa -1}\left( \frac{\partial R_1}{\partial \zeta_1}(r_1,\psi_{1,0},0) -\alpha \operatorname{Id}\right)\psi_{1,\alpha} \\
 &+Q_{1,\alpha-1}(r_1,\psi_{1,0},\ldots,\psi_{1,\alpha-1})\quad \forall\,\alpha\in \mathbb N,
\end{aligned}
\end{equation}
where $Q_{1,\alpha-1}=Q_{1,\alpha-1}(r_1,\psi_{1,0},\ldots,\psi_{1,\alpha-1})$ is real-analytic and satisfies $Q_{1,\alpha-1}(0,\ldots,0)=0$ for all $\alpha\in \mathbb N$. For simplicity, we have here dropped the $l$-superscript on $\psi_{1,\alpha}$, which refers to the direction $\breve e^l$.
% where
% \begin{align*}
%  Q_{n-1}(r,\psi_0,\ldots,\psi_{n-1})=r^{k-1} \sum_{\alpha=0}^\infty \Bigg( F_{\alpha,0}  M_{\alpha,n-1}+\sum_{\beta=0}^{n-1} F_{\alpha,n-\beta} (\zeta^\alpha)_{\beta}\Bigg),
% \end{align*}
% is analytic.
Then from \cite[Corollary 4.2]{ksum}, we obtain $\psi_{1,\alpha} \in S(\breve e^l,\delta_\alpha,\eta)$, with $\delta_\alpha>0$, for each $\alpha\in \mathbb N_0$ and all $l\in \{1,\ldots,2(\kappa -1)\}$. Each $\psi_{1,\alpha}$ is the $(\kappa-1)$-sum of a Gevrey-$\frac{1}{\kappa-1 }$ series \eqref{psinl} (independent of $l$) in the direction $\breve e^l$. To obtain that $\delta_\alpha>0$ is uniformly bounded from below, we use that \eqref{phin1eqn} is a linear first order equation for $\psi_{\alpha}$, $\alpha\in \mathbb N$, which is regular for $r_1\ne 0$. The solution of \eqref{phi0eqn} for $\alpha=0$ is covered by Lemma \ref{lem:Sj}. Therefore by the existence and uniqueness of solutions of linear equations and induction on $\alpha$, we conclude that $\psi_{1,\alpha}=\psi_{1,\alpha}^l$ can be extended to
$S^l=S(\breve e^l,\delta,\eta)$, $\delta=\delta_0$, for all $\alpha\in \mathbb N_0$, as claimed.
% \note{Since each equation for $n\in \mathbb N$ is linear with  respect to $w_n$ we could also refer to Braaksma etc. and maybe even Wasow has it.} 
\end{proof}
\begin{lemma}\label{lem:conj}
 The following holds for any $l\in \{1,\ldots,\kappa \}$, any $r_1\in S^l$ and any $\alpha\in \mathbb N_0$:
 \begin{align*}
  \overline{\psi_{1,\alpha}^l(r_1)} = \psi_{1,\alpha}^{2\kappa -l}(\overline r_1),
 \end{align*}
 with $2\kappa -l$ understood $\operatorname{mod}(2(\kappa -1))$.
\end{lemma}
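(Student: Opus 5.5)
The plan is to argue by uniqueness, exactly as for \eqref{conj} in Lemma \ref{lem:Sj}, and to proceed by induction on $\alpha$.

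\textbf{Real coefficients.} The system \eqref{K1eqns} is real-analytic, so the PDE \eqref{pdew} has real coefficients: $\overline{R_1(r_1,\zeta_1,\epsilon_1)}=R_1(\overline r_1,\overline\zeta_1,\overline\epsilon_1)$. Consequently the functions $Q_{1,\alpha-1}$ appearing in \eqref{phin1eqn} are also real, in the same sense. Hence if $\zeta_1(r_1,\epsilon_1)$ solves \eqref{pdew}, then so does $\overline{\zeta_1(\overline r_1,\overline\epsilon_1)}$. At the level of the hierarchy: if $(\psi_{1,0},\ldots,\psi_{1,\alpha})$ solves \eqref{phi0eqn}--\eqref{phin1eqn} on a sector $S$, then $r_1\mapsto\overline{\psi_{1,\beta}(\overline r_1)}$, $\beta\le\alpha$, solves the same hierarchy on the conjugate sector $\overline S$.

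\textbf{Geometry of the sectors.} Conjugation maps $S^l=S(\breve e^l,\delta,\eta)$ onto $S(-\breve e^l,\delta,\eta)$. Now
\[
-\breve e^l=-\tfrac{\pi}{\kappa-1}(l-1)\equiv \tfrac{\pi}{\kappa-1}(2\kappa-l-1)=\breve e^{2\kappa-l}\pmod{2\pi},
\]
so $\overline{S^l}=S^{2\kappa-l}$, with indices taken mod $2(\kappa-1)$.

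\textbf{Induction on $\alpha$.} The base case $\alpha=0$ is \eqref{conj}, since $\psi_{1,0}^l$ is the solution $\psi^l$ of Lemma \ref{lem:Sj}.

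For the inductive step, assume the identity holds for all indices below $\alpha$. Then $\widetilde\psi(r_1):=\overline{\psi_{1,\alpha}^l(\overline r_1)}$ solves the linear equation \eqref{phin1eqn} on $S^{2\kappa-l}$, with the lower-order data $\psi_{1,\beta}^{2\kappa-l}$, $\beta<\alpha$.

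Next, take the Gevrey expansion \eqref{psinl}. Its coefficients are real, because the formal series is unique and the equations are real. Hence $\widetilde\psi$ has the same asymptotic series as $\psi_{1,\alpha}^{2\kappa-l}$, and it is the $(\kappa-1)$-sum of that series in the direction $\breve e^{2\kappa-l}$. The reason is that conjugation preserves Borel–Laplace summation: it maps the Laplace integral along direction $\theta$ to the one along $-\theta$, and real coefficients are fixed. By uniqueness of $(\kappa-1)$-sums on sectors of opening exceeding $\pi/(\kappa-1)$, which is the case since $\eta>\pi/(\kappa-1)$, we get $\widetilde\psi=\psi_{1,\alpha}^{2\kappa-l}$.

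\textbf{Main obstacle.} The delicate step is justifying that the realness of the formal coefficients $\psi_{1,\alpha,\beta}$ follows from the recursion. This should follow from the recursive determination in \cite{ksum}, since every coefficient is produced from real data by real operations.

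Alternatively, one can avoid summability theory altogether: the difference between $\widetilde\psi$ and $\psi_{1,\alpha}^{2\kappa-l}$ solves the homogeneous equation $-r_1^\kappa v'=(\Omega+\mathcal O(r_1^{\kappa-1}))v$ and is flat. Solutions of this equation behave like $\exp(\pm i/((\kappa-1)r_1^{\kappa-1}))$. Such solutions cannot be flat on a sector of opening exceeding $\pi/(\kappa-1)$ unless they vanish, because on such a sector both exponentials grow somewhere. This gives the uniqueness directly.
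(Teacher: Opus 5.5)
Your proposal is correct and follows the paper's argument, which is the single remark that the lemma is a consequence of the conjugation symmetry: real-analyticity makes $r_1\mapsto\overline{\psi_{1,\alpha}^l(\overline{r_1})}$ a solution of the same hierarchy on $\overline{S^l}=S^{2\kappa-l}$, and uniqueness of the $(\kappa-1)$-sums on sectors of opening $\eta>\pi/(\kappa-1)$ then identifies it with $\psi_{1,\alpha}^{2\kappa-l}$, just as for the $\alpha=0$ case in Lemma \ref{lem:Sj}; you simply supply the details. The step you flag as the main obstacle is not delicate: the coefficients $\psi_{1,\alpha,\beta}$ are real because the formal solution of the real equations is unique, as you already note yourself.
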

\begin{proof}
 This is a consequence of the symmetry with respect to conjugation.
\end{proof}

\begin{lemma}\label{lem:Phi0l}
Let $m_{2,\alpha}=m_{2,\alpha}(x_2)$, $x_2\in \mathbb C$, be the sequence of analytic functions in Proposition \ref{prop:Wuspj2}. Then
\begin{align*}
m_{1,\alpha}(\epsilon_1)= \epsilon_1^{\kappa +\alpha}m_{2,\alpha}(\epsilon_1^{-1})\quad \forall\,\epsilon_1>0,\,\alpha\in \mathbb N,
\end{align*}
with $m_{1,\alpha}$ given in Lemma \ref{lem:En1}. Moreover, let $\psi^l:S^l\rightarrow \mathbb C^2$, $l\in\{1,\ldots,2(l-1)\}$, be as in Lemma \ref{lem:Sj}. Then
\begin{align*}
\psi_{1,0}^l=\psi^l\quad \forall\, l\in \{1,\ldots,2(\kappa -1)\},
\end{align*}
with $\psi_{1,0}^l$ given in Lemma \ref{lem:Pnl}.
\end{lemma}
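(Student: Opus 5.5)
The lemma has two independent parts, and I would prove both by uniqueness of formal solutions rather than by any estimates.

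\textbf{First identity.} I would compare the two formal expansions through the chart change \eqref{cc}. On $x_2\neq 0$ we have $r_1=\epsilon x_2$, $\epsilon_1=x_2^{-1}$ and $(y_1,z_1)=x_2^{-\kappa}(y_2,z_2)$. Hence $\epsilon=r_1\epsilon_1$ and $x_2=\epsilon_1^{-1}$.

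Substituting the formal series \eqref{m2n} gives
\begin{align*}
\zeta_1 = x_2^{-\kappa}\sum_{\alpha} m_{2,\alpha}(x_2)\epsilon^\alpha = \sum_{\alpha} \epsilon_1^{\kappa+\alpha} m_{2,\alpha}(\epsilon_1^{-1})\, r_1^\alpha .
\end{align*}
This is a formal series in $r_1$ whose coefficients are functions of $\epsilon_1$, analytic on $\epsilon_1\neq0$ small.

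Since \eqref{invman0} and \eqref{pdew} describe the same invariant-manifold condition, written in two charts of the same vector field, each truncation of this series solves \eqref{pdew} to the corresponding order in $r_1$. The step I would check is that the preliminary division by the nonvanishing factor $1+r_1^{2(\kappa-1)}P(1,\epsilon_1)^{-1}F_1$ does not matter: it only reparametrizes time, so graph invariance is unchanged.

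The recursion in Lemma \ref{lem:En1} determines each $m_{1,\alpha}$ algebraically from lower-order terms. Indeed, $\Omega$ is invertible and the right-hand side carries the factor $r_1^{\kappa-1}$. The coefficients are therefore unique among functions of $\epsilon_1$, so the identity holds on the overlap $\epsilon_1>0$. In particular, $\epsilon_1^{\kappa+\alpha}m_{2,\alpha}(\epsilon_1^{-1})$ extends analytically to $\epsilon_1=0$.

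\textbf{Second identity.} I would set $\epsilon_1=0$ in \eqref{pdew}. Equation \eqref{phi0eqn} is then exactly \eqref{y1z1r1}: the blown-up $\epsilon=0$ system, after eliminating time via $\dot r_1$. I would verify that $P(1,0)=-1$ matches $-(1-r_1^{\kappa-1}z_1)^\kappa+\dots$ after the same normalization, so the two equations coincide.

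Both $\psi_{1,0}^l$ and $\psi^l$ are $(\kappa-1)$-sums in direction $\breve e^l$ of formal solutions of this equation. The formal solution with zero constant term is unique, because $\Omega$ is invertible and the recursion is triangular. The $(\kappa-1)$-sum in a non-singular direction is unique by Watson-type uniqueness, as used in Lemma \ref{lem:Sj} via \cite{ksum}. Hence $\psi_{1,0}^l=\psi^l$ on $S^l$. In fact the proof of Lemma \ref{lem:Pnl} already takes $\psi_{1,0}$ from Lemma \ref{lem:Sj}, so this part is largely a matter of bookkeeping.

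\textbf{Main obstacle.} The delicate point is confirming that the two normalizations, division by $\dot x$ versus division by the $F_1$-factor, give literally the same equation at $\epsilon_1=0$. I would handle this by noting that both are graph equations for the same invariant set, which are invariant under time reparametrization.
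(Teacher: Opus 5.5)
Your proposal is correct and takes the same approach as the paper. The paper's proof is the one-line argument ``chart change \eqref{cc} plus uniqueness of the series'', and for the second identity the paper relies on the sectorial uniqueness of the $(\kappa-1)$-sums from Lemma \ref{lem:Sj}, which the proof of Lemma \ref{lem:Pnl} already uses. Your version supplies what the paper leaves implicit: the grading-preserving substitution $\epsilon^\alpha=r_1^\alpha\epsilon_1^\alpha$, the triangular recursion with no integration constants, Watson-type uniqueness on sectors of opening $\eta>\frac{\pi}{\kappa-1}$, and the fact that time reparametrization does not affect graph invariance.
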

\begin{proof}
 The result follows from \eqref{hatx1} and \eqref{cc} and the uniqueness of the series.
\end{proof}

In the following, we let $\mathcal G(S)\{\zeta_1,\epsilon_1\}$ denote the set of analytic functions $W_1:S\times B_{\xi}^3 \rightarrow \mathbb C^\gamma$, where $S=S(\breve e,\delta,\eta)$ is an open sector,
% and where $B_\xi^3\subset \mathbb C^3$ is the ball of radius $\xi>0$ centered at the origin,
such that
\begin{align*}
 W_1(r_1,\zeta_1,\epsilon_1) = \sum_{\alpha,\beta} W_{1,\alpha,\beta}(r_1)\zeta_1^\alpha\epsilon_1^\beta ,
\end{align*}
with the series being absolutely convergent on a neighborhood of $(\zeta_1,\epsilon_1)=(0,0)$ uniformly with respect $r_1\in S$, and where each $W_{1,\alpha,\beta}$ is the $(\kappa-1) $-sum on $S$ (being continuous on the boundary with $W_{1,\alpha,\beta}(0)=0$ for all ${\alpha,\beta}\in \mathbb N_0$) of a Gevrey-$\frac{1}{\kappa-1}$ series as $r_1\rightarrow 0$. As indicated, we suppress the dependency on $\gamma$, which should be clear from the context. $\mathcal G(S)\{\epsilon_1\}$ (as a set of functions that are independent of $\zeta_1$) is defined similarly. We fix $\chi>0$ and $\delta>0$ small enough. Recall that $\epsilon=r_1\epsilon_1$ and that $\eta=\frac{\pi}{\kappa-1}+\chi$.
\begin{proposition}\label{prop:Hn}
 Consider any $l\in\{1,\ldots,2(\kappa-1)\}$ and let $N\in \mathbb N$, $N\gg 1$. Then there exists a $W_{1}^{l,N}\in \mathcal G(S^l)\{\epsilon_1\}$ such that the blow-up transformation
 \begin{align*}
 (r_1,\widetilde \zeta_1,\epsilon_1)\mapsto (r_1, \zeta_1,\epsilon_1), \quad r_1 \in S^l:=S\left(\breve e^l,\delta,\eta\right),\quad (\widetilde \zeta_1,\epsilon_1) \in B_\xi^2,\quad 0<\xi\ll 1,
 \end{align*}
  defined by 
 \begin{align*}
\zeta_1=W_{1}^{l,N}(r_1,\epsilon_1) + (r_1\epsilon_1)^N \widetilde \zeta_1,
 \end{align*}
brings \eqref{K1eqns} into the following form
\begin{equation}\label{eq:K1eqnsnf}
\begin{aligned}
 \dot r_1 &= r_1^\kappa P(1,\epsilon_1) ,\\
 \dot{\widetilde \zeta}_1 &=\left(\Omega +r_1^{\kappa-1} \widetilde W_{11}^{l,N}(r_1,\epsilon_1)\right)\widetilde \zeta_1+(r_1\epsilon_1)^N \widetilde W_{10}^{l,N}(r_1,\widetilde \zeta_1,\epsilon_1),\\
 \dot \epsilon_1 &=-r_1^{\kappa -1}\epsilon_1P(1,\epsilon_1).
%   \dot z_1 &=-y_1 - \frac12 r_1^{\kappa -1}{P}'_x(1,\epsilon_1)z_1-\kappa  r_1^{\kappa -1} P(1,\epsilon_1) z_1 + r_1^{2(\kappa -1)} \widetilde H_1(r_1,y_1,z_1,\epsilon_1),
\end{aligned}
\end{equation}
Here $\widetilde W_{11}^{l,N}\in \mathcal G(S^l)\{\epsilon_1\}$ and $\widetilde W_{10}^{l,N}\in \mathcal G(S^l)\{\widetilde \zeta_1,\epsilon_1\}$. Moreover,
\begin{align*}
 \widetilde W_{11}^{l,N}(r_1,\epsilon_1) = \left( -\frac12 {{P}'_x(1,\epsilon_1)} -\kappa P(1,\epsilon_1)\right)\operatorname{Id}+\mathcal O(r_1^{\kappa -1}),\quad (r_1,\epsilon_1)\in S^l\times B_\xi,
 \end{align*}
%  \widetilde W_{11}^{l,N}(r_1,\epsilon_1) = \begin{pmatrix}
%                                  \frac12 r_1^{\kappa -1} \frac{{P}'_x(1,\epsilon_1)}{P(1,\epsilon_1)} + \kappa  r_1^{\kappa -1} & - \frac{1}{P(1,\epsilon_1)}\\
%                                  \frac{1}{P(1,\epsilon_1)} &
%                                  \frac12 r_1^{\kappa -1} \frac{{P}'_x(1,\epsilon_1)}{P(1,\epsilon_1)} + \kappa  r_1^{\kappa -1}
%                                 \end{pmatrix}+\mathcal O(r_1^{2(\kappa -1)}),
% \end{align*}
% \fbox{check remainder?}
where $\mathcal O(r_1^{\kappa -1})$ is uniform with respect to $\epsilon_1\in B_\xi$.
\end{proposition}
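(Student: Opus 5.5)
The plan is to build $W_1^{l,N}$ as a finite truncation that combines the two formal expansions of Lemma \ref{lem:En1} and Lemma \ref{lem:Pnl}. Concretely, I would take
\begin{align*}
W_1^{l,N}(r_1,\epsilon_1) := \sum_{\alpha=0}^{N} \psi_{1,\alpha}^l(r_1)\epsilon_1^\alpha + \sum_{\alpha=2(\kappa-1)}^{N} \widehat m_{1,\alpha}(\epsilon_1) r_1^\alpha,
\end{align*}
where $\widehat m_{1,\alpha}$ denotes $m_{1,\alpha}$ with its Taylor polynomial of degree $N$ in $\epsilon_1$ removed. This removal avoids double counting of the monomials $r_1^\alpha\epsilon_1^\beta$ with $\alpha,\beta\le N$. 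The double counting is handled consistently because both series solve \eqref{pdew} and share the same doubly-formal series in $(r_1,\epsilon_1)$ by uniqueness; this is the content of Lemma \ref{lem:Phi0l}. In effect one works with the doubly indexed formal solution $\sum \zeta_{\alpha\beta}r_1^\alpha\epsilon_1^\beta$.

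The first summand is in $\mathcal G(S^l)\{\epsilon_1\}$ by Lemma \ref{lem:Pnl}. The second is analytic in $(r_1,\epsilon_1)\in B_\delta^2$, so it lies trivially in $\mathcal G(S^l)\{\epsilon_1\}$ as well.

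The next step is to compute the residual $\mathcal R:=-r_1^\kappa\partial_{r_1}W+r_1^{\kappa-1}\epsilon_1\partial_{\epsilon_1}W-\Omega W-r_1^{\kappa-1}R_1(r_1,W,\epsilon_1)$. I expect $\mathcal R=\mathcal O(\epsilon_1^{N+1})$ from the $\epsilon_1$-truncation of the $\psi$-series, since each $\psi^l_{1,\alpha}$ solves \eqref{phi0eqn}--\eqref{phin1eqn}. I also expect $\mathcal R=\mathcal O(r_1^{N+1})$ from the $r_1$-truncation of the $m$-series. Combining the two, I would show that $\mathcal R$ is divisible by $(r_1\epsilon_1)^{N'}$ for some $N'$ comparable to $N$, and then relabel $N$. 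The divisibility is a Taylor-in-$\epsilon_1$ argument with coefficients in $\mathcal G(S^l)$: the coefficient of $\epsilon_1^\beta$ for $\beta\le N$ is, by construction, a function of $r_1$ whose Gevrey asymptotic series starts at order $r_1^{N+1}$. Since these coefficients are $(\kappa-1)$-sums, a vanishing asymptotic expansion to order $N$ gives a genuine bound $\mathcal O(r_1^{N+1})$ on the sector. An analogous statement holds for the terms with $\beta>N$, which carry $\epsilon_1^{N+1}$.

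Next I would substitute $\zeta_1=W+(r_1\epsilon_1)^N\widetilde\zeta_1$. Since $(r_1\epsilon_1)^\cdot=0$ along the flow of \eqref{K1eqns} ($\epsilon=r_1\epsilon_1$ is conserved), the prefactor passes through the time derivative. This yields \eqref{K1eqnsnf} with
\begin{align*}
\widetilde W_{11}=\left(-\tfrac12P_x'(1,\epsilon_1)-\kappa P(1,\epsilon_1)\right)\operatorname{Id}+r_1^{\kappa-1}\partial_{\zeta_1}(\cdots)(r_1,W,\epsilon_1).
\end{align*}
The correction is $\mathcal O(r_1^{\kappa-1})$ because $W=\mathcal O(r_1^{2(\kappa-1)})$ and the nonlinear terms in \eqref{K1eqns} carry $r_1^{2(\kappa-1)}$. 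Finally $\widetilde W_{10}=(r_1\epsilon_1)^{-N}\mathcal R+(\text{quadratic terms in }\widetilde\zeta_1)$. Membership in the $\mathcal G$-classes follows because $\mathcal G$ is closed under products, compositions with analytic functions, and division by monomials when the asymptotic series vanish to that order.

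I expect the main obstacle to be the divisibility step. One has to prove that the remainder is $\mathcal O((r_1\epsilon_1)^N)$ uniformly in $S^l\times B_\xi$, and that the quotient is again a convergent series in $\epsilon_1$ whose coefficients are $(\kappa-1)$-sums. For this I would follow \cite[Proposition 4.5]{bkt} and use the uniqueness of the common doubly-formal series. The $\epsilon_1$-radius of convergence for $\widetilde W_{10}$ comes from the analyticity of $R_1$ and of the finitely many $\psi^l_{1,\alpha}$ involved.
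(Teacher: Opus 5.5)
\paragraph{Overall.} Your construction is the paper's, but the truncation order is too low by a factor of two, and this matters. The shared ingredients are the truncated $\epsilon_1$-series of the $\psi^l_{1,\alpha}$, plus the $r_1$-polynomial of the $m_{1,\alpha}$ with their $\epsilon_1$-Taylor polynomials removed, a residual divisible by a power of $r_1\epsilon_1$, and conservation of $r_1\epsilon_1$.

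\paragraph{The truncation order.} After substituting $\zeta_1=W_1^{l,N}+(r_1\epsilon_1)^N\widetilde\zeta_1$, the $\widetilde\zeta_1$-independent part of $\dot{\widetilde\zeta}_1$ is $(r_1\epsilon_1)^{-N}\mathcal R$. In \eqref{K1eqnsnf} this must equal $(r_1\epsilon_1)^N\widetilde W_{10}^{l,N}(r_1,0,\epsilon_1)$. Hence $\widetilde W_{10}^{l,N}=(r_1\epsilon_1)^{-2N}\mathcal R+\mathcal O(\vert\widetilde\zeta_1\vert^2)$, not $(r_1\epsilon_1)^{-N}\mathcal R+\cdots$ as you wrote. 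The quadratic terms are fine, since they carry $(r_1\epsilon_1)^{-N}(r_1\epsilon_1)^{2N}$.

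With your truncation at order $N$ you only get $\mathcal R=\mathcal O((r_1\epsilon_1)^{N+1})$. The forcing in the $\widetilde\zeta_1$-equation is then merely $\mathcal O(r_1\epsilon_1)=\mathcal O(\epsilon)$. That is useless later: Proposition \ref{prop:WlO1} needs forcing of size $\epsilon^N$ with $N$ large, to beat the factor $\epsilon^{-3\kappa/2}$ from \eqref{stmest1} and the integration time $t=\mathcal O(\epsilon^{1-\kappa})$.

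The paper therefore truncates both sums at order $2N-1$, so that $\mathcal R=\mathcal O((r_1\epsilon_1)^{2N})$. Your ``relabel $N$'' repairs the argument only if it means exactly this: the exponent in the residual must be at least twice the exponent in the scaling factor.

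\paragraph{The divisibility step.} Let $M$ be the truncation order. For $\beta\le M$, the $\epsilon_1^\beta$-coefficients of $\mathcal R$ vanish identically, not merely to order $r_1^{M+1}$. For the tail $\beta>M$, a coefficient-by-coefficient Gevrey argument would need constants uniform in $\beta$, which your ``analogous statement'' does not supply.

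A cleaner uniform estimate goes as follows. Write $W_1^{l,N}$ as the sum of two pieces.
\begin{itemize}
\item[(i)] The polynomial $\sum_{\alpha\le M}m_{1,\alpha}(\epsilon_1)r_1^\alpha$. It is analytic on a full neighbourhood and solves \eqref{pdew} up to $\mathcal O(r_1^{M+1})$.
\item[(ii)] Finitely many terms $\epsilon_1^\beta\bigl(\psi^l_{1,\beta}-(\text{degree-}M\text{ partial sum of its asymptotic series})\bigr)$. Each is $\mathcal O(r_1^{M+1})$ on $S^l$, together with $r_1^\kappa\partial_{r_1}$ of it.
\end{itemize}
This splitting uses that the $\epsilon_1$-Taylor coefficients of $m_{1,\alpha}$ coincide with the $r_1$-asymptotic coefficients of $\psi^l_{1,\beta}$. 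That follows from uniqueness of the doubly formal solution of \eqref{pdew}, not from Lemma \ref{lem:Phi0l}.

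Hence $\mathcal R=\mathcal O(r_1^{M+1})$ uniformly for $\epsilon_1\in B_\xi$. Writing $\mathcal R=\epsilon_1^{M+1}g$ with $g$ analytic in $\epsilon_1$, the maximum modulus principle on $\vert\epsilon_1\vert=\xi$ gives $g=\mathcal O(r_1^{M+1})$. This is the divisibility you need.
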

\begin{proof}
 We follow \cite[Proposition 4.4]{bkt} (which in turn is based upon \cite[Lemma 4.5]{uldall2024a}) and take
 \begin{align*}
  W_{1}^{l,N}(r_1,\epsilon_1):=\sum_{n=0}^{2N-1} \psi_{1,\alpha}^l(r_1)\epsilon_1^n + \sum_{n=2(\kappa -1)}^{2N-1}( m_{1,\alpha}-J^{2N-1}(m_{1,\alpha}))(\epsilon_1) r_1^n,
 \end{align*}
where $J^\gamma(W_1)$, $\gamma\in \mathbb N$, denotes the partial sum of an analytic function $W_1=\sum_{\alpha=0}^\infty W_{1,\alpha} \epsilon_1^\alpha$:
 \begin{align*}
  J^\gamma(W_1)(\epsilon_1): = \sum_{\alpha=0}^\gamma W_{1,\alpha} \epsilon_1^\alpha.
 \end{align*}
 Then by construction $\zeta_1=W_{1}^{l,N}(r_1,\epsilon_1)$ defines an invariant manifold up to order $\mathcal O((r_1\epsilon_1)^{2N})$, i.e. it solves \eqref{pdew} up to remainder terms of order $\mathcal O((r_1\epsilon_1)^{2N})$.
 The result then follows from a simple calculation. 
\end{proof}

% \begin{figure*}[t!]
% 		\centering
% 		\subfigure[]{\includegraphics[width=0.47\textwidth]{./S1S2.pdf}}
% 		\subfigure[]{\includegraphics[width=0.47\textwidth]{./S1S2S3.pdf}}
% 		\caption{The sectors $S_j$ in Lemma \ref{lem:Sj} for $\kappa =2$ (a) and $\kappa =3$ (b). The directions $\breve e^j$ define the direction of $S_j$ whereas $\breve h^j$ corresponds to a $\frac{\pi}{\kappa -1}$ rotation of $\breve e^j$ clockwise. These directions will be important later. }
% 		\label{fig:realtime}
% 	\end{figure*}

% Next, we consider 
\subsection{Extension of the invariant manifolds}
We now use \eqref{K1eqnsnf}, in the following form
\begin{equation}\label{eq:K1eqnsnf2}
\begin{aligned}
 \dot r_1 &= r_1^\kappa P(1,\epsilon r_1^{-1}) ,\\
 \dot{\widetilde \zeta}_1 &=\left(\Omega+r_1^{\kappa-1}\widetilde W_{11}^{l,N}(r_1,\epsilon r_1^{-1})\right)\widetilde\zeta_1+\epsilon^N \widetilde W_{10}^{l,N}(r_1,\widetilde \zeta_1,\epsilon r_1^{-1}),
%  \dot \epsilon_1 &=-r_1^{\kappa -1}\epsilon_1P(1,\epsilon_1).
%   \dot z_1 &=-y_1 - \frac12 r_1^{\kappa -1}{P}'_x(1,\epsilon_1)z_1-\kappa  r_1^{\kappa -1} P(1,\epsilon_1) z_1 + r_1^{2(\kappa -1)} \widetilde H_1(r_1,y_1,z_1,\epsilon_1),
\end{aligned}
\end{equation}
to extend the invariant manifolds from Proposition \ref{prop:Wuspj2} from $x=\mathcal O(\epsilon)$ to $x=\mathcal O(1)$; in comparison with \eqref{K1eqnsnf} we have eliminated $\epsilon_1=\epsilon r_1^{-1}$ through the conservation of $\epsilon=r_1\epsilon_1$, see \eqref{hatx1}.
% \begin{lemma}
%  
% \end{lemma}
For this purpose, we will use the elliptic paths, i.e. solutions $x_2(s)$, $s\in \mathbb R$, of \eqref{realtime}. Notice that if $x_2(s)$, $s\in \mathbb R$, is a solution of \eqref{realtime}, then 
\begin{align*}
 r_1(t) = \epsilon x_2(\epsilon^{\kappa -1} t),
\end{align*}
solves the first equation in \eqref{K1eqnsnf2}.
Define
\begin{align*}
\mathcal A^l(\epsilon):=S^l\setminus B_{\epsilon \xi^{-1}}\subset \mathbb C,\quad l\in \{1,\ldots,\kappa \}.
\end{align*}
We suppose that $r_1(t)\in \mathcal A^l(\epsilon)$ for all $t\in I(\epsilon)\subset \mathbb R$ and focus on $l\in \{1,\ldots,\kappa \}$; the remaining cases $l\in \{\kappa +1,\ldots,2(\kappa -1)\}$ can be obtained by applying conjugation, see Lemma \ref{lem:conj}. We fix $\xi>0$ and $\delta>0$ small enough. %Since \eqref{realtime} has finite time blow-up near $\widehat e_l$, we consider $t\in [0,\epsilon^{-k+1}H(r_1(0),\epsilon)]$, $H(r_1(0),\epsilon)>0$ small enough. \note{need to update this?}

% \note{We need something on $t\in I(\epsilon)$?}
%&Consider \eqref{K1eqnsnf2} with $\widetilde W_{10}^{l,N}=0$, and s
\begin{lemma}
% Let $l\in \{1,\ldots,\kappa \}$ and suppose that $r_1(t)\in \mathcal A^l(\epsilon)$ for all $t\in I(\epsilon)\subset \mathbb R$ and all $0<\epsilon \ll 1$.
%  \note{need to update this}
 Let $\widetilde{\mathcal M}_1^{l,N}(t,r_{1}(0),\epsilon)$ denote the fundamental matrix  associated with the linear problem
 \begin{align*}\dot{\widetilde\zeta}_1 &=\left(\Omega+r_1(t)^{\kappa-1}\widetilde{W}_{11}^{l,N}(r_1(t),\epsilon r_1(t)^{-1})\right)\widetilde \zeta_1,
 \end{align*}
 with $\widetilde{\mathcal M}_1^{l,N}(0,r_{1}(0),\epsilon)=\operatorname{Id}$.
Then
 \begin{align}
 \widetilde{\mathcal M}_1^{l,N}(t,r_{1}(0),\epsilon) = \left(\frac{{P}(r_{1}(0),\epsilon)}{{P}(r_1(t),\epsilon)}\right)^{\frac12} \left(\frac{r_{1}(0)}{r_1(t)}\right)^\kappa  \exp\left(\Omega t \right)\mathcal O(1),\nonumber
\end{align}
for all $t\in I(\epsilon)$.
In particular, there exist a constant $C>0$ so that
\begin{align}
\Vert \widetilde{\mathcal M}_1^{l,N}(t,r_{1}(0),\epsilon) \widetilde{\mathcal M}_1^{l,N}(s,r_{1}(0),\epsilon)^{-1}\Vert \le C\epsilon^{-\frac{3\kappa}{2}}\quad \forall\,0\le s\le t,\,t\in I(\epsilon),\label{eq:stmest1}
\end{align}
for all $0<\epsilon\ll 1$.
% \fbox{update}

\end{lemma}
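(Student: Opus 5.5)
We write the linear system along the solution $r_1(t)$ of the first equation in \eqref{K1eqnsnf2}. Since $r_1(t)$ and $t$ are related by $\dot r_1=r_1^\kappa P(1,\epsilon r_1^{-1})=P(r_1,\epsilon)$, recall \eqref{homoP}, we can use $r_1$ as the independent variable along the path. The plan has three steps: split off the leading diagonal part $r_1^{\kappa-1}\bigl(-\tfrac12 P'_x(1,\epsilon_1)-\kappa P(1,\epsilon_1)\bigr)\operatorname{Id}$ of $r_1^{\kappa-1}\widetilde W_{11}^{l,N}$ given by Proposition \ref{prop:Hn}, integrate it explicitly, and treat the $\mathcal O(r_1^{2(\kappa-1)})$ remainder as a perturbation.

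\textbf{Step 1: the scalar part.} This part commutes with $\Omega$, so it factors out of the fundamental matrix as a scalar multiplier. Using $\epsilon_1=\epsilon/r_1$ and \eqref{Pk1}, we have
\begin{align*}
r_1^{\kappa-1}P'_x(1,\epsilon_1)=P'_x(r_1,\epsilon),\qquad r_1^{\kappa-1}P(1,\epsilon_1)=P(r_1,\epsilon)/r_1 .
\end{align*}
Hence, with $dt=dr_1/P(r_1,\epsilon)$,
\begin{align*}
\int_0^t r_1^{\kappa-1}\Bigl(-\tfrac12 P'_x-\kappa P\Bigr)\,ds=\int_{r_1(0)}^{r_1(t)}\Bigl(-\frac{P'_x(r,\epsilon)}{2P(r,\epsilon)}-\frac{\kappa}{r}\Bigr)dr .
\end{align*}
This is exactly $\log\Bigl[\bigl(P(r_1(0),\epsilon)/P(r_1(t),\epsilon)\bigr)^{1/2}\bigl(r_1(0)/r_1(t)\bigr)^\kappa\Bigr]$, as in \eqref{logQk}. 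It gives the explicit prefactor in the statement, times $\exp(\Omega t)$.

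\textbf{Step 2: the remainder.} In the rotating frame $\widetilde\zeta_1=(\text{scalar})\exp(\Omega t)\,u$, the equation for $u$ becomes
\begin{align*}
\dot u=\exp(-\Omega t)\,\mathcal O(r_1^{2(\kappa-1)})\,\exp(\Omega t)\,u .
\end{align*}
The coefficient is uniform in $\epsilon_1\in B_\xi$ because $\widetilde W_{11}^{l,N}\in\mathcal G(S^l)\{\epsilon_1\}$, and $\exp(\pm\Omega t)$ grows at most exponentially in $\vert\operatorname{Im}t\vert$. Here $t$ is real, so $\exp(\pm\Omega t)$ is a rotation and is bounded. By Gronwall it then suffices to show
\begin{align*}
\int_{I(\epsilon)}\vert r_1(t)\vert^{2(\kappa-1)}\,dt=\int\frac{\vert r_1\vert^{2(\kappa-1)}\vert dr_1\vert}{\vert P(r_1,\epsilon)\vert}=\mathcal O(1).
\end{align*}

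\textbf{Main obstacle.} The hard part is this uniform bound along the elliptic paths. I expect to handle it by rescaling $r_1=\epsilon x_2$, which turns the integral into
\begin{align*}
\epsilon^{\kappa-1}\int\frac{\vert x_2\vert^{2(\kappa-1)}}{\vert Q(x_2)\vert}\,\vert dx_2\vert
\end{align*}
along a trajectory of \eqref{realtime} inside $\vert x_2\vert\ge\xi^{-1}$ and $\arg x_2\in S^l$. There $\vert Q(x_2)\vert\sim\vert x_2\vert^\kappa$, because $\mathcal A^l(\epsilon)$ keeps away from the roots. For $\kappa\ge 2$ the integrand is then $\mathcal O(\vert x_2\vert^{\kappa-2})$, and the trajectory length in $x_2$ is $\mathcal O(\delta/\epsilon)$. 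The resulting bound $\epsilon^{\kappa-1}(\delta/\epsilon)^{\kappa-1}=\delta^{\kappa-1}$ is $\mathcal O(1)$. This requires the paths to be monotone in $\vert x_2\vert$, which holds near $\breve e^l$ by the normal form \eqref{w1normalform}. This yields the $\mathcal O(1)$ factor in the statement.

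\textbf{Step 3: the estimate \eqref{stmest1}.} We form $\widetilde{\mathcal M}_1^{l,N}(t)\widetilde{\mathcal M}_1^{l,N}(s)^{-1}$, where the $\mathcal O(1)$ factors and the rotations remain bounded. The scalar factor is $(P(r_1(s))/P(r_1(t)))^{1/2}(r_1(s)/r_1(t))^\kappa$. With $\vert r_1\vert\in[\epsilon\xi^{-1},\delta]$ and $\vert P(r_1,\epsilon)\vert\sim\vert r_1\vert^\kappa$, it is at worst $\mathcal O\bigl((\delta\xi/\epsilon)^{\kappa/2+\kappa}\bigr)=\mathcal O(\epsilon^{-3\kappa/2})$, which gives \eqref{stmest1}.
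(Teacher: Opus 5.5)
Your proposal is correct and is essentially the paper's proof. You factor off the scalar part (integrated via \eqref{logQk}) together with $\exp(\Omega t)$, leaving a propagator driven by an $\mathcal O(r_1^{2(\kappa-1)})$ coefficient. The chain rule $dt=dr_1/P(r_1,\epsilon)$ turns this into $d\widehat\zeta_1/dr_1=\mathcal O(r_1^{\kappa-2})\widehat\zeta_1$ (your rescaling to $x_2$ is the same computation), and the scalar ratio is bounded by $(\delta\xi/\epsilon)^{3\kappa/2}$.

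The one soft spot, which the paper also leaves implicit, is the path-length control. Your monotonicity argument only covers paths near $\breve e^l$, but it is not needed. With $w=r_1^{1-\kappa}$ you have $\dot w=(\kappa-1)(1+\mathcal O(\xi))$ on $\mathcal A^l(\epsilon)$, so $\operatorname{Re}w$ increases at a uniform rate. Hence $\int\vert r_1\vert^{2(\kappa-1)}dt=\int\vert w\vert^{-2}dt\le C\delta^{\kappa-1}$ along any real-time path in the whole sector, including near the $\breve h$ directions.
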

\begin{proof}
  First, we put 
 \begin{align*}
    \widetilde{\mathcal M}_1^{l,N}(t,r_1(0),\epsilon) = \left(\frac{{P}(r_{1}(0),\epsilon)}{{P}(r_1(t),\epsilon)}\right)^{\frac12} \left(\frac{r_{1}(0)}{r_1(t)}\right)^\kappa  \exp\left(\Omega t \right)\widehat{\mathcal M}_1^{l,N}(t,r_1(0),\epsilon).
 \end{align*}
 Now,
$\vert r_{1}(t)\vert \in [\epsilon \xi^{-1},\delta]$ implies that $\vert \epsilon_1(t)\vert\in [\epsilon \delta^{-1},\xi]$. We therefore find that $\widehat{\mathcal M}_1^{l,N}(t,r_1(0),\epsilon)$ is the fundamental matrix associated with
\begin{align*}
 \frac{d\widehat \zeta_1}{dt} = \mathcal O(r_1^{2(\kappa -1)})\widehat \zeta_1,
\end{align*}
satisfying $\widehat{\mathcal M}_1^{l,N}(0,r_1(0),\epsilon)=\operatorname{Id}$. Here we have used \eqref{logQk} (with ${Q}\mapsto {P}$, $x_2\mapsto x$) and that $\Vert \exp(\Omega t)\Vert \equiv 1$.
But then
\begin{align*}
 \frac{d\widehat \zeta_1}{dr_1} = \mathcal O(r_1^{\kappa -2})\widehat \zeta_1,
\end{align*}
by the chain rule, see \eqref{K1eqnsnf2}. It follows that $\Vert \widehat{\mathcal M}_1^{l,N}(t,r_{1}(0),\epsilon) \widehat{\mathcal M}_1^{l,N}(s,r_{1}(0),\epsilon)^{-1}\Vert$, $0\le s\le t,\, t\in I(\epsilon)$, is uniformly bounded, which in turn then gives the desired estimate \eqref{stmest1}. %The result then follows from a simple calculation upon using that .% upon using that the exponential matrix has norm $1$.
% \begin{align*}
%  \vert \zeta(r_1)\vert\le C \left(\frac{w_{10}}{w_1}\right)^{\frac{3k}{2}}\vert \zeta(r_{10})\vert,
% \end{align*}
% since $P_{k,1}(0)=-1$

\end{proof}

We now consider a section $\Sigma^{l,in}\subset \mathbb C$ defined as a neighborhood of $\mathcal E^l \cap \{\vert x_2\vert = \xi^{-1}\}$ within $\{\vert x_2\vert = \xi^{-1}\}$ with $\xi>0$ small enough and $l\in \{1,\ldots,\kappa \}$. We then in turn take $\mathcal X^l$ so large that it overlaps with $\Sigma^{l,in}$ (which is possible cf. Lemma \ref{lem:realtime}). Then on $\Sigma^{l,in}$, $\mathbf{W}_{loc}^{\sigma^l}(\mathbf{q}^l(\epsilon))$ is a graph over $x_2$, recall \eqref{Wgl}. In the $(r_1,\widetilde \zeta_1)$-coordinates, the graph takes the following form
\begin{align}\eqlab{zeta1init}
\widetilde \zeta_1 = \epsilon^N \widetilde m_1^{l,N}(r_1,\epsilon),
\end{align}
with $x_2=r_1 \epsilon^{-1} \in \Sigma^{l,in}$. Here we have used \eqref{cc} and the uniqueness of the formal series. We fix the direction of time so that $\breve{\mathcal E}^l$ is a stable manifold and then use the graph as initial conditions for the flow of \eqref{K1eqnsnf2}. In the following, we fix $\xi>0$, $\chi>0$ and $\delta>0$ small enough.
% \begin{proposition}
%  j
% \end{proposition}

\begin{proposition}\label{prop:WlO1}
The invariant manifold $\mathbf{W}_{loc}^{\sigma^l}(\mathbf{q}^l(\epsilon))$ is
graph over $r_1\in \mathcal A^l(\epsilon)$:
\begin{align*}
 (y_1,z_1) = m_1^l(r_1,\epsilon),\quad r_1 \in  \mathcal A^l(\epsilon),
\end{align*}
for all $\epsilon\in (0,\epsilon_0)$ with $0<\epsilon_0\ll 1$.
Here $m_1^l(\cdot,\epsilon)$ is real-analytic with respect to $r_1\in \mathcal A^l(\epsilon)$ and  $C^\infty$ with respect to $\epsilon\in [0,\epsilon_0)$. In particular,
\begin{align}\label{eq:yzO1}
 m_1^l(r_1,0) = \psi_{1,0}^l(r_1),\quad r_1\in S^l.
\end{align}
%
% In particular, for  $r_1\in S^l\setminus B_{\mu}$, with $\mu>0$ small enough, so that $r_1=\mathcal O(1)$ with respect to $\epsilon\rightarrow 0$, then $\mathbf{W}_{loc}^{\sigma^l}(\mathbf{q}^l(\epsilon))$ takes the form
% \begin{align}
%  \begin{pmatrix}
%   y\\
%   z
%  \end{pmatrix} = \psi^l(r_1)+\mathcal O(\epsilon).
%  \end{align}
% Here $\mathcal O(\epsilon)$ is analytic with respect to $r_1\in S^l\setminus B_{\mu}$, being $C^\infty$ with respect to $0\le \epsilon<\ll 1$.
% uniformly $\mathcal O(\epsilon)$ close to the unperturbed manifold.
\end{proposition}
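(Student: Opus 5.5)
The plan is to work entirely in the normal form \eqref{K1eqnsnf2}, where the invariant manifold is described by $\widetilde\zeta_1$, and to show that $\widetilde\zeta_1$ stays of size $\mathcal O(\epsilon^{N-3\kappa/2})$ along the whole flow from $\Sigma^{l,in}$ to $\vert r_1\vert=\delta$. The initial condition is the graph \eqref{zeta1init}, $\widetilde\zeta_1=\epsilon^N\widetilde m_1^{l,N}(r_1,\epsilon)$ on $x_2=r_1\epsilon^{-1}\in\Sigma^{l,in}$, with $\widetilde m_1^{l,N}$ uniformly bounded. This bound follows from Proposition \ref{prop:Wuspj2}, since $m_2^l$ has the asymptotic expansion \eqref{En2}, together with Lemma \ref{lem:Phi0l}: the terms $m_{1,\alpha}$ and $\psi_{1,\alpha}^l$ that are subtracted in $W_1^{l,N}$ agree with the formal expansion up to order $(r_1\epsilon_1)^{2N}$.

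First, I cover $\mathcal A^l(\epsilon)$ by curves $r_1(t)=\epsilon x_2(\epsilon^{\kappa-1}t)$, with $x_2(s)$ solving \eqref{realtime} with time chosen so that $\breve{\mathcal E}^l$ is stable. By Lemma \ref{lem:realtime}, in particular the hyperbolic saddle $\breve e^l$ and the normal form \eqref{w1normalform}, trajectories starting in $\Sigma^{l,in}$ near $\mathcal E^l$ fan out near $\breve e^l$ and leave every direction within $S^l$. More precisely, there is a neighborhood of $\breve e^l$ in the Poincaré compactification whose image under $x=\epsilon x_2$ covers $S^l\setminus B_{\epsilon\xi^{-1}}$ for any fixed opening strictly less than $\eta$ (shrinking $\chi$ if needed). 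The linearization at $\breve e^l$ has eigenvalues $(-1)^{l-1}$ and $(\kappa-1)(-1)^l$, and the unstable direction along $\mathbb S^1$ sweeps out the angular sector of half-width $\pi/(2(\kappa-1))$ around $\breve e^l$. This gives the time intervals $I(\epsilon)$ on which $r_1(t)\in\mathcal A^l(\epsilon)$.

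Second, along each such curve I write the variation-of-constants formula
\[
\widetilde\zeta_1(t)=\widetilde{\mathcal M}_1^{l,N}(t)\widetilde\zeta_1(0)+\epsilon^N\int_0^t \widetilde{\mathcal M}_1^{l,N}(t)\widetilde{\mathcal M}_1^{l,N}(s)^{-1}\widetilde W_{10}^{l,N}(r_1(s),\widetilde\zeta_1(s),\epsilon r_1(s)^{-1})\,ds
\]
and use the estimate \eqref{stmest1}. The linear part contributes $\mathcal O(\epsilon^{N-3\kappa/2})$. For the integral I change variables from $s$ to $r_1$ via $ds=dr_1/(r_1^\kappa P(1,\epsilon_1))$. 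Then $\int\vert dr_1\vert/\vert r_1\vert^\kappa=\mathcal O(\epsilon^{1-\kappa})$ over $\vert r_1\vert\ge\epsilon\xi^{-1}$, so the integral is $\mathcal O(\epsilon^{N-3\kappa/2+1-\kappa})$. A standard bootstrap (Gronwall or contraction on the ball $\vert\widetilde\zeta_1\vert\le\epsilon^{N/2}$) then keeps the solution in the domain where $\widetilde W_{10}^{l,N}$ is defined and bounded, provided $N\gg\kappa$.

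Consequently the manifold is a graph $\zeta_1=W_1^{l,N}(r_1,\epsilon r_1^{-1})+\epsilon^N\widetilde\zeta_1$ over $\mathcal A^l(\epsilon)$. It is analytic in $r_1$, being the image of an analytic initial graph under an analytic flow, and the graph property holds because $\dot r_1\ne0$. Setting $\epsilon=0$ kills every term except $\psi_{1,0}^l(r_1)$, since $\epsilon_1=\epsilon/r_1\to0$ for fixed $r_1\ne0$ and the $m_{1,\alpha}$-corrections vanish at $\epsilon_1=0$ to high order; this gives \eqref{yzO1}. For $C^\infty$ dependence on $\epsilon$ I will use the arbitrariness of $N$: the remainder is $\mathcal O(\epsilon^{N'})$ for every $N'$, with derivatives controlled by differentiating the fixed-point equation.

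I expect the main obstacle to be the covering step: showing that the flow from $\Sigma^{l,in}$ reaches all of $S^l$ with opening $\eta=\frac{\pi}{\kappa-1}+\chi$, which extends slightly past the adjacent $\breve h$-directions, while staying inside a sector where the sums $\psi^l_{1,\alpha}$ and hence $\widetilde W_{11}^{l,N}$ and $\widetilde W_{10}^{l,N}$ are defined. A second difficulty is keeping \eqref{stmest1} uniform along the curves that spend long times near $\breve e^l$. I would handle both using the normal form \eqref{w1normalform}, possibly by replacing straight real-time curves with paths of slightly rotated complex time $e^{i\vartheta}s$, which is permissible because the estimate on $\widehat{\mathcal M}_1^{l,N}$ only used integration in $r_1$.
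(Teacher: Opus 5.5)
Your argument is essentially the paper's proof. It uses variation of constants in \eqref{K1eqnsnf2} with the bound \eqref{stmest1}, and the $\mathcal O(\epsilon^{1-\kappa})$ time along real-time (elliptic) paths (your $\int\vert dr_1\vert/\vert r_1\vert^\kappa$ is the same quantity), giving $\widetilde\zeta_1=\mathcal O(\epsilon^{N+1-\frac{5\kappa}{2}})$; then continuation along regular paths, \eqref{yzO1} from Proposition \ref{prop:Hn} and Lemma \ref{lem:Phi0l}, and $C^\infty$-smoothness from the arbitrariness of $N$.

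One caution: do not fall back on rotated complex time $e^{i\vartheta}s$. The factor $\exp(\Omega t)$ in $\widetilde{\mathcal M}_1^{l,N}$ has norm one only for real $t$; otherwise it grows like $e^{\vert\operatorname{Im} t\vert}$, with $\vert\operatorname{Im} t\vert$ up to order $\vartheta\epsilon^{1-\kappa}$. This is exactly the anti-Stokes growth exploited in Section \ref{sec:diff}, and no power $\epsilon^N$ can absorb it. Fortunately real-time paths already suffice for the covering. To leading order they preserve $\operatorname{Im}(r_1^{1-\kappa})$ and turn back past the $\breve h$-directions, so they reach all of $\mathcal A^l(\epsilon)$ after at most a slight shrinking of $\delta$ and an enlargement of $\mathcal X^l$.
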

\begin{proof}
We fix $N\gg 1$ and use variations of constants based upon \eqref{K1eqnsnf2}:
 \begin{align*}
 \widetilde \zeta_1(t)  =& \widetilde{\mathcal M}_1^{l,N}(t,r_{1}(0),\epsilon) \epsilon^N \widetilde m_1^{l,N}(r_1(0),\epsilon) \\
  &+ \int_0^t \widetilde{\mathcal M}_1^{l,N}(t,r_{1}(0),\epsilon)\widetilde{\mathcal M}_1^{l,N}(s,r_{1}(0),\epsilon)^{-1} \epsilon^N \widetilde W_{10}^{l,N}(r_1(s),\widetilde \zeta_1(s),\epsilon r_1(s)) ds,
 \end{align*}
 using \eqref{zeta1init} with $r_1=r_1(0)$ as initial condition.
 We can then estimate the right hand side using \eqref{stmest1}: Suppose that $t\in I(\epsilon)$, $t\ge 0$, is so that $\vert \widetilde \zeta_1(t)\vert \le C_0$, $C_0>0$ fixed, $\vert r_1(t)\vert \le [\epsilon \xi^{-1},\delta]$, $r_1(t)\in \mathcal A^l(\epsilon)$ for all $0<\epsilon\ll 1$. Then there are constants $C_1$ and $C_2$ so that
\begin{align*}
 \vert \zeta(t)\vert\le C_1 \epsilon^{N-\frac{3\kappa}{2}}+tC_2 \epsilon^{N-\frac{3\kappa}{2}}.
\end{align*}
Now, we recall that \eqref{realtime} has finite time blowup with respect to $s$ along $\mathcal E^l$, see Corollary \ref{cor:blowuptime}.
% \note{Maybe we need to update the notation for time. Should it be $\tau$?}
We have $s=\epsilon^{\kappa -1}t$ and hence $0\le t\le C_3\epsilon^{1-\kappa }$ for all $0<\epsilon\ll 1$. In this way, we have
\begin{align*}
 \vert \widetilde \zeta_1(t)\vert\le C_1 \epsilon^{N-\frac{3\kappa}{2}}+C_2 C_3 \epsilon^{N+1-\frac{5\kappa}{2}}\le C_4 \epsilon^{N+1-\frac{5\kappa}{2}},
\end{align*}
which is uniformly bounded and $o(1)$ with respect to $\epsilon\to 0$ for $N\ge  \frac{5\kappa}{2}$.
We then obtain an analytic continuation of $\mathbf{W}^{\sigma^l}(\mathbf{q}^l(\epsilon))$ along paths $r_1(t)\in \mathcal A^l(\epsilon)$ in the complex plane. Since $r_1=r_1(t)$ is regular (i.e. $r_1'(t)\ne 0$), the existence of the desired graph follows.
Finally, \eqref{yzO1} follows from the definition of $\widetilde \zeta_1$, see Proposition \ref{prop:Hn}, and Lemma \ref{lem:Phi0l}. To prove the $C^\infty$-property of $m_1^l(\cdot,\epsilon)$ with respect to $\epsilon\in [0,\epsilon_0)$, we proceed as in the proof of Proposition \ref{prop:Wuspj2} using that the invariant manifolds are unique and independent of the (arbitrary) choice of $N$. %Further details are left out for simplicity.
\end{proof}

\section{Computing the differences}\label{sec:diff}
 A Corollary of Proposition \ref{prop:Wuspj2} and Proposition \ref{prop:WlO1}, is that $\mathbf{W}_{loc}^{\sigma^j}(\mathbf{q}^j(\epsilon))$ and $\mathbf{W}_{loc}^{\sigma^{j+1}}(\mathbf{q}^{j+1}(\epsilon))$, $j\in \{1,\ldots,\kappa -1\}$,  have the graph forms $(y^{j},z^j)(x,\epsilon)$, $(y^{j+1},z^{j+1})(x,\epsilon)$, respectively, that are both defined and uniformly bounded on the set $\widetilde{\mathcal H}^j\subset \mathbb C$ defined by
  \begin{align}\eqlab{tildeHj}
  x \in \widetilde{\mathcal H}^j:=\left(\epsilon \mathcal H^j\right) \cap B_{\delta},
  \end{align}
  with $\delta>0$ fixed small, for  all $0<\epsilon\ll 1$. Moreover, $x\mapsto (y^j,z^j)(x,\epsilon)$, $x\mapsto (y^{j+1},z^{j+1})(x,\epsilon)$ are each (a): equivariant with respect to conjugation, (b): $\mathcal O(\vert (x,\epsilon)\vert^{3\kappa -2})$ with respect to $x\rightarrow 0$ (in $x\in \widetilde{\mathcal H}^j$), $\epsilon\rightarrow 0$, and finally (c):
 \begin{align}
\begin{pmatrix}
 y^l\\
 z^l
\end{pmatrix}(x,\epsilon) =x^\kappa  \psi^{\nu^l(x)}(x)+\mathcal O(\epsilon),\quad x\in \widetilde{\mathcal H}^j\setminus B_{\widetilde \delta},\,l\in \{j,j+1\},\label{eq:yl}
 \end{align}
 with $0<\widetilde \delta<\delta$ fixed and where
 \begin{align*}
  \nu^l(x) := \begin{cases}
          l & \mbox{if $\operatorname{Im}(x)>0$},\\
          2\kappa -l & \mbox{else}.
         \end{cases}
 \end{align*}
 %  Here $2\kappa-l$ understood $\operatorname{mod}(2(\kappa-1))$.
 Recall Lemma \ref{lem:Sj} and Lemma \ref{lem:conj}. Here $\mathcal O(\epsilon)$ is $C^\infty$-smooth with respect to $\epsilon\in [0,\epsilon_0)$, uniformly with respect to $x\in \widetilde{\mathcal H}^j\setminus B_{\widetilde \delta}\subset \mathbb C$.
%  
%  
%  
%  We have two invariant manifolds , both defined and bounded on a neighborhood of $\vert x\vert=\mu$, $\operatorname{Arg}(x) = \frac{\pi}{\kappa -1}+\frac{\pi}{2}$, and all $0<\epsilon\ll 1$. In fact, they are both bounded on the hyperbolic path $\mathcal H^j$.

We now define $\Delta y^j = y^{j+1}-y^j$, $\Delta z^j  = z^{j+1}-z^j$, for each $j\in \{1,\ldots,\kappa -1\}$. Then as $(y^{j},z^j)(x,\epsilon)$, $(y^{j+1},z^{j+1})(x,\epsilon)$, are each invariant manifold solutions of \eqref{XYZ} for $x\in \widetilde{\mathcal H}^j$, $0\le \epsilon\ll 1$, we obtain by the mean value theorem a system
\begin{equation}\label{eq:Deltayz}
\begin{aligned}
 ({\Delta y}^j)' &= i\left(A_{11}(x,\epsilon) \Delta y^j + A_{12}(x,\epsilon )\Delta z^j\right),\\
 ({\Delta z^j})'&= i \left(A_{21}(x,\epsilon) \Delta y^j + A_{22}(x,\epsilon )\Delta z^j\right),\\
 x' &=i{P}(x,\epsilon),\\
  \epsilon'&=0,
\end{aligned}
\end{equation}
that is linear with respect to $(\Delta y^j,\Delta z^j)$.
Here
\begin{align*}
 A_{11}(x,\epsilon) &= {P}(x,\epsilon) \int_0^1 \bigg(\frac{-\frac12 {P}'(x,\epsilon)+\frac{\partial G}{\partial y}(x,y^j_\gamma,z^j_\gamma,\epsilon)}{{P}(x,\epsilon)+F(x,y^j_\gamma,z^j_\gamma,\epsilon) }\\
 &-\frac{\left( z^j_\gamma-\frac12 y^j_\gamma{P}'(x,\epsilon)+G(x,y^j_\gamma,z^j_\gamma,\epsilon)\right) }{({P}(x,\epsilon)+F(x,y^j_\gamma,z^j_\gamma,\epsilon))^2} \frac{\partial F}{\partial y}(x,y^j_\gamma,z^j_\gamma,\epsilon)\bigg)d\gamma,\\
 A_{12}(x,\epsilon) &= {P}(x,\epsilon) \int_0^1 \bigg(\frac{1+\frac{\partial G}{\partial z}(x,y^j_\gamma,z^j_\gamma,\epsilon)}{{P}(x,\epsilon)+F(x,y^j_\gamma,z^j_\gamma,\epsilon) }\\
 &-\frac{\left( z^j_\gamma-\frac12 y^j_\gamma{P}'(x,\epsilon)+G(x,y^j_\gamma,z^j_\gamma,\epsilon)\right) }{({P}(x,\epsilon)+F(x,y^j_\gamma,z^j_\gamma,\epsilon))^2} \frac{\partial F}{\partial z}(x,y^j_\gamma,z^j_\gamma,\epsilon)\bigg) d\gamma,
%  C(x,\epsilon) &= {P}(x,\epsilon) \int_0^1 \bigg(\frac{1+\frac{\partial S}{\partial z}(x,y^j_\gamma,z^j_\gamma,\epsilon)}{{P}(x,\epsilon)+F(x,y^j_\gamma,z^j_\gamma,\epsilon) }\\
%  &-\frac{\left( z^j_\gamma-\frac12 y^j_\gamma{P}'(x,\epsilon)+G(x,y^j_\gamma,z^j_\gamma,\epsilon)\right) }{({P}(x,\epsilon)+F(x,y^j_\gamma,z^j_\gamma,\epsilon))^2} \frac{\partial R}{\partial z}(x,y^j_\gamma,z^j_\gamma,\epsilon)\bigg) dt,
\end{align*}
along with similar expressions for $A_{21}$ and $A_{22}$ (which we leave out for simplicity).
Here $\sigma^{j}_\gamma = \sigma^{j}_\gamma(x,\epsilon)$, $\sigma=y,z$, are given by $$ \sigma^{j}_\gamma:=\sigma^j+\gamma \Delta \sigma^j,\quad \gamma\in [0,1].$$ These functions are (known) real-analytic functions, depending smoothly on $\epsilon$ (in the same sense as $(y^l,z^l)$ in the charts above).  It is important to note that the equations \eqref{Deltayz} are time reversible with respect to conjugation of $(\Delta y^j, \Delta z^j,x)$; we keep $\epsilon>0$ real.

We cannot easily evaluate $A_{\alpha\beta}$ since the expressions involve (removable) singularities for $x=\epsilon=0$. However, if we put
\begin{align}\label{eq:x2r2}
x=\epsilon x_2,
\end{align} 
then we easily obtain 
\begin{align*}
 A_{11} (\epsilon x_2,\epsilon)&= -\frac12 \epsilon^{\kappa -1} {Q}'(x_2)+\mathcal O(\epsilon^{2(\kappa -1)}),\\
 A_{12} (\epsilon x_2,\epsilon)&=1+\mathcal O(\epsilon^{2(\kappa -1)}),\\
 A_{21}(\epsilon x_2,\epsilon) &=-1+\mathcal O(\epsilon^{2(\kappa -1)}),\\
 A_{22}(\epsilon x_2,\epsilon) &=-\frac12 \epsilon^{\kappa -1} {Q}'(x_2)+\mathcal O(\epsilon^{2(\kappa -1)}),
\end{align*}
for $0\le \epsilon\ll 1$,
uniformly with respect to $x_2\in \mathcal H^j \cap B_{\varrho}$, $\varrho>0$ fixed, and
\begin{align}\eqlab{thisfuck}
 x_2' &= i\epsilon^{\kappa -1} {Q}(x_2).
\end{align}
Here we have used that ${Q}\ne 0$ on $\mathcal H^j$.
Therefore for $\epsilon=0$, we have
\begin{equation}\label{eq:Deltayzx2eps0}
\begin{aligned}
 ({\Delta y}^j)' &= i\Delta z^j,\\
 ({\Delta z}^j)'&=-i\Delta y^j.\\
 x_2' &=0,
\end{aligned}
\end{equation}
Here $(\Delta y^j,\Delta z^j)=(0,0)$, $x_2\in \mathcal H^j \cap B_{\varrho}$, $\varrho>0$ fixed, defines a normally hyperbolic critical manifold of saddle-type. Indeed, the eigenvalues of the linearization of \eqref{Deltayzx2eps0} are $\pm 1,0$. Therefore by Fenichel's theory we can straighten out the stable and unstable manifolds of $(\Delta y^j,\Delta z^j)=(0,0)$ (which are line bundles by the linearity) through a transformation of the form
 \begin{align}\label{eq:uvhere}
  \begin{pmatrix}
   \Delta y^j\\
   \Delta z^j
  \end{pmatrix} = \begin{pmatrix}
  Y_2^j(x_2,\epsilon) & \overline{Y_2^j}(\overline x_2,\epsilon)\\
  1 & 1
  \end{pmatrix}\begin{pmatrix}
   \Delta s^j\\
   \Delta u^j
  \end{pmatrix},\quad x_2\in \mathcal H^j \cap B_{\varrho},
 \end{align}
 with $Y_2^j(x_2,0)=-i$,
 so that the $(\Delta s^j,\Delta u^j)$-subsystem becomes diagonalized
 \begin{equation}\label{eq:uvsystem}
 \begin{aligned}
  (\Delta s^j)' &= \left(1-\frac{i}{2} \epsilon^{\kappa -1} {Q}'(x_2)+\mathcal O(\epsilon^{2(\kappa -1)})\right)\Delta s^j,\\
  (\Delta u^j)'&= \left(-1-\frac{i}{2} \epsilon^{\kappa -1} {Q}'(x_2)+\mathcal O(\epsilon^{2(\kappa -1)})\right)\Delta u^j.
 \end{aligned}
 \end{equation}
 These equations together with \eqref{thisfuck} constitute a Fenichel normal form for the slow-fast system, see \cite{jones_1995}.
  All $\mathcal O(\epsilon^{2(\kappa -1)})$-terms are $C^n$ (for any $n\in \mathbb N$) with respect to $x_2\in \mathcal H^j \cap B_{\varrho}$ and $\epsilon \in [0,\epsilon_0)$, $0<\epsilon_0=\epsilon_0(n,\varrho)\ll 1$. To obtain the form \eqref{uvsystem} we have fixed a stable manifold of $(\Delta y^j,\Delta z^j)=(0,0)$ and then from here obtained the unstable manifold through the time-reversal symmetry. The transformed system given by \eqref{uvsystem} and $x_2'=i\epsilon^{\kappa -1}{Q}(x_2)$, $x_2\in \mathcal H^j$, is then time reversible with respect to the symmetry $(\Delta s^j,\Delta u^j,x_2)\mapsto (\overline{\Delta u^j},\overline{\Delta s^j},\overline x_2)$.

  In order to ``globalize the diagonalization'' to $x=\mathcal O(1)$, $x\in \widetilde{\mathcal H}^j$, we use the following directional blow-up
 \begin{align}\label{eq:rho1thetaw1}
  \begin{cases} x = \rho_1 \e^{i\theta},\\
   \epsilon = \rho_1w_1,
  \end{cases}
 \end{align}
%  
% and 
% \begin{align}\label{eq:x2r2}
%   \begin{cases} x &= \rho_2 x_2,\\
%    \epsilon &= \rho_2,
%   \end{cases}
%  \end{align}
%  respectively. 
Notice that $x_2=w_1^{-1} \e^{i\theta}$ as in \eqref{x2w1}. Since we are only interested in $\mathcal H^j$, we restrict attention to either $\theta=H^j(w_1)$ or $\theta = H^{2\kappa -1-j}(w_1)$, $w_1\in [0,\mu]$, cf. Lemma \ref{lem:imagtime}, see \eqref{conjhl} and assertion \ref{it:imag8}. %\fbox{check index}
% By the symmetry, we consider the former.
This leads to the following expansion of $A_{\alpha\beta}$:
 \begin{align*}
 A_{11} (\rho_1 \e^{i\theta},\rho_1 w_1)&= -\frac12 \rho_1^{\kappa -1} {P}_x'(\e^{i\theta},w_1)+\mathcal O(\rho_1^{2(\kappa -1)}),\\
 A_{12} (\rho_1 \e^{i\theta},\rho_1 w_1)&=1+\mathcal O(\rho_1^{2(\kappa -1)}),\\
 A_{21}(\rho_1 \e^{i\theta},\rho_1 w_1) &=-1+\mathcal O(\rho_1^{2(\kappa -1)}),\\
 A_{22}(\rho_1 \e^{i\theta},\rho_1 w_1) &=-\frac12 \rho_1^{\kappa -1} {P}_x'(\e^{i\theta},w_1)+\mathcal O(\rho_1^{2(\kappa -1)}),
\end{align*}
uniformly with respect to $w_1\in [0,\mu]$, and
\begin{align*}
\rho_1' &=\rho_1^\kappa  \operatorname{Re}\left(i\e^{-i\theta}{P}(\e^{i\theta},w_1)\right),\\
%  \dot \theta &=\rho_1^\kappa  \operatorname{Im}\left(i\e^{-i\theta}{P}(\e^{i\theta},w_1)\right),\\
 w_1' &=-\rho_1^{\kappa -1}w_1 \operatorname{Re}\left(i\e^{-i\theta}{P}(\e^{i\theta},w_1)\right).
 \end{align*}
Therefore the set $\rho_1=0$, $(\Delta y^j,\Delta z^j)=(0,0)$, $w_1\in [0,\mu]$, with either $\theta=H^j(w_1)$ or $\theta = H^{2\kappa -1-j}(w_1)$, defines a partially hyperbolic set of equilibria of saddle type. %\fbox{check $2k-j$?}
Indeed, the eigenvalues are $\pm 1$, $0$, $0$. In projective variables $Y^j:=(\Delta z^j)^{-1} \Delta y^j$, the stable and unstable manifolds of $(\Delta y^j,\Delta z^j)=(0,0)$ become slow manifolds in the $(Y^j,x_2,\epsilon)$-coordinates and center manifolds in the $(Y^j,\rho_1,w_1)$-coordinates for $\mu>0$ small enough. This follows from simple calculations, see \cite[Proposition 5.2]{bkt}. We can therefore extend the slow manifolds by using the center manifolds in the $(Y^j,\rho_1,w_1)$-coordinates (upon taking $\varrho>\mu^{-1}>0$ so that the sets overlap upon change of coordinates), see \cite[Proposition 5.2]{bkt} for further details. This leads to ``global'' stable and unstable manifolds of ($\Delta y^j,\Delta z^j)=(0,0)$ in an $\mathcal O(1)$ neighborhood of $x=0$ in $\widetilde{\mathcal H}^j$ for all $0<\epsilon\ll 1$. As above, these manifolds are not unique, but we fix a stable manifold of $(\Delta y^j,\Delta z^j)=(0,0)$ and then obtain the unstable one through the time-reversal symmetry. This leads to the following result: %see \cite[Proposition 5.2]{bkt} for further details.
% This then leads to the following result.
%  The following proposition shows that the diagonalization can be ``globalized'' (using blow-up).

% We use blow-up
% \begin{align*}
%  \rho\ge 0,\,(\breve w,\breve x)\in \mathbb S^2,\,\mapsto \begin{cases}
%                               x &=\rho \breve x,\\
%                               \epsilon &=\rho \breve w.
%                              \end{cases}
% \end{align*}

\begin{proposition}
% \fbox{Would be nice if this could be done in a way that respect the symmetry/conjugation}
 Fix any $n\in \mathbb N$ and $\varrho>\mu^{-1}>0$ with $\mu>0$ small enough. Then there exists an $\epsilon_0>0$ small enough, such that the system \eqref{Deltayz} can be diagonalized for $x\in \widetilde{\mathcal H}^j$ for all $\epsilon \in (0,\epsilon_0)$ by a transformation of the form
 \begin{align}\label{eq:uv}
  \begin{pmatrix}
   \Delta y^j\\
   \Delta z^j
  \end{pmatrix} = \begin{pmatrix}
  Y^j(x,\epsilon) & \overline{Y^j}(\overline x,\epsilon)\\
  1 & 1
  \end{pmatrix}\begin{pmatrix}
   \Delta s^j\\
   \Delta u^j
  \end{pmatrix},
 \end{align}
 where
 \begin{align*}
   Y^j(x,\epsilon) = -i +\mathcal O(\vert(x,\epsilon)\vert^{2(\kappa -1)}), \end{align*}
%  \note{maybe rethink this: better to think of $\mathcal O(2(k-1))$ in charts}
 so that
\begin{equation}\label{eq:Deltauv}
\begin{aligned}
 (\Delta s^j)' &= \left(1-\frac{i}{2} {P}'_x(x,\epsilon)+\mathcal O(\vert(x,\epsilon)\vert^{2(\kappa -1)})\right)\Delta s^j,\\
 (\Delta u^j)' &= \left(-1-\frac{i}{2} {P}'_x(x,\epsilon)+\mathcal O(\vert(x,\epsilon)\vert^{2(\kappa -1)})\right)\Delta u^j,\\
 x' &=i{P}(x,\epsilon),\\
 \epsilon'&=0.
\end{aligned}
\end{equation}
The system \eqref{Deltauv} with $x\in \widetilde{\mathcal H}^j$ is time reversible with respect to the symmetry
\begin{align}\label{eq:symmetry}
(\Delta s^j,\Delta u^j,x,\epsilon)\mapsto (\overline{\Delta u^j},\overline{\Delta s^j},\overline x,\epsilon).
\end{align}
Moreover, all $\mathcal O(\vert(x,\epsilon)\vert^{2(\kappa -1)})$-terms are $C^n$-smooth and the order is understood in the usual sense with respect to the dual limit $(x,\epsilon)\rightarrow 0$ (along $\widetilde{\mathcal H}^j \times \mathbb R_+$). In particular, in the $(x_2,\epsilon)$-coordinates all $\mathcal O(\vert(x,\epsilon)\vert^{2(\kappa -1)})$-terms are $\mathcal O(\epsilon^{2(\kappa -1)})$ uniformly with respect to $x_2\in \mathcal H^j\cap \mathcal B_\varrho$, whereas in the $(\rho_1,w_1)$-coordinates the same functions are $\mathcal O(\rho_1^{2(\kappa -1)})$ uniformly with respect to $w_1\in [0,\mu]$ (for both $\theta = H^j(w_1)$ and $\theta=H^{2\kappa -1-j}(w_1)$).
\end{proposition}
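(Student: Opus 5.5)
The plan is to construct the diagonalizing transformation by treating \eqref{Deltayz} in projective form, since the linearity in $(\Delta y^j,\Delta z^j)$ means invariant line bundles suffice. Set $Y^j:=(\Delta z^j)^{-1}\Delta y^j$. Then $Y^j$ satisfies a Riccati equation
\begin{align*}
 (Y^j)' = i\left(A_{12} + (A_{11}-A_{22})Y^j - A_{21}(Y^j)^2\right),\quad x'=iP(x,\epsilon),
\end{align*}
and the two roots $Y=\mp i$ of $A_{12}-A_{21}Y^2=1+Y^2=0$ at leading order are the stable and unstable line fields. I will work in two charts and match on their overlap.

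\textbf{Step 1: the $(x_2,\epsilon)$-chart.} Using $x=\epsilon x_2$ and the expansions of $A_{\alpha\beta}(\epsilon x_2,\epsilon)$, the Riccati equation becomes a slow-fast system with slow time given by \eqref{thisfuck}. Its critical manifold $Y=-i$, $x_2\in\mathcal H^j\cap B_\varrho$, is normally hyperbolic: the linearization in $Y$ is $\mp 2$, nonzero. Fenichel theory therefore yields a $C^n$ slow manifold $Y=Y_2^j(x_2,\epsilon)=-i+\mathcal O(\epsilon^{2(\kappa-1)})$. The correction is $\mathcal O(\epsilon^{2(\kappa-1)})$ and not $\mathcal O(\epsilon^{\kappa-1})$ because the $\epsilon^{\kappa-1}$ term $A_{11}-A_{22}$ vanishes identically at that order. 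This reproduces \eqref{uvhere}.

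\textbf{Step 2: the $(\rho_1,w_1)$-chart \eqref{rho1thetaw1}.} Restrict to $\theta=H^j(w_1)$, and separately to $\theta=H^{2\kappa-1-j}(w_1)$. By the expansions of $A_{\alpha\beta}$ there, the set $\rho_1=0$, $Y=-i$, $w_1\in[0,\mu]$ is a line of equilibria of the extended $(Y,\rho_1,w_1)$-system, with eigenvalue $\mp 2$ transverse to it and zero along $\rho_1,w_1$. Center manifold theory, which gives $C^n$ smoothness on a compact piece, provides $Y=-i+\mathcal O(\rho_1^{2(\kappa-1)})$. Because slow manifolds are nonunique, I choose the center manifold in this chart and the Fenichel manifold in Step 1 so that they coincide on the overlap $x_2\in\mathcal H^j$, $|x_2|\in(\mu^{-1},\varrho)$. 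Invariance plus $x_2=w_1^{-1}e^{i\theta}$ let me extend one into the other, following \cite[Proposition 5.2]{bkt}. Combining the two charts gives $Y^j(x,\epsilon)$ on all of $\widetilde{\mathcal H}^j$.

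\textbf{Step 3: the second line bundle and the symmetry.} I define the second bundle as $\overline{Y^j}(\overline x,\epsilon)$. Since \eqref{Deltayz} is time reversible under conjugation of $(\Delta y,\Delta z,x)$ and $\mathcal H^j$ is conjugation-invariant by Lemma \ref{lem:imagtime}, this is the unstable bundle, and \eqref{symmetry} holds by construction. Substituting \eqref{uv} into \eqref{Deltayz} gives a diagonal system, because both columns span invariant line bundles. The diagonal entries are $i(A_{21}Y+A_{22})$ and its counterpart. Evaluating with $Y=-i+\mathcal O(|(x,\epsilon)|^{2(\kappa-1)})$ gives $1-\frac{i}{2}P'_x+\mathcal O(|(x,\epsilon)|^{2(\kappa-1)})$, and similarly for the $\Delta u^j$ entry. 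The matrix in \eqref{uv} stays invertible since $Y^j\approx -i$ and $\overline{Y^j}\approx i$.

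The main obstacle is Step 2: proving uniform $C^n$ control of the center manifold up to $w_1=0$, where the base dynamics has the singular factor $\rho_1^{\kappa-1}$. I would first try to desingularize the $(\rho_1,w_1)$-flow by dividing by $\rho_1^{\kappa-1}$ only in the base variables, treating $Y$ as fast. If that fails, I would use the normal form \eqref{w1normalform2} to straighten $\mathcal H^j$ near $\breve h^j$ and then apply standard center manifold theorems with cutoff.
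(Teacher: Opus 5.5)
Your proposal follows the paper's own route, as in \cite[Proposition 5.2]{bkt}: the projective variable $Y^j=(\Delta z^j)^{-1}\Delta y^j$, a Fenichel slow manifold in the $(x_2,\epsilon)$-chart, and a center manifold of the equilibrium set $\{\rho_1=0\}$ in the two $(\rho_1,w_1)$-charts, glued on the overlap guaranteed by $\varrho>\mu^{-1}$, with the second column obtained from the conjugation time-reversal symmetry. The center manifold theorem applies directly there, so no desingularization is needed; your fallback of dividing only the base equations by $\rho_1^{\kappa-1}$ should be dropped, since it changes the orbits.

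One correction to Step 3. Since $A_{21}=-1+\ldots$, the diagonal entry $i(A_{21}Y+A_{22})$ at $Y=-i$ equals $-1-\frac{i}{2}P'_x$, not $1-\frac{i}{2}P'_x$. The rates in \eqref{Deltauv}, which are the ones used in Lemma \ref{lem:final}, therefore go with $Y^j=+i+\mathcal O(\vert(x,\epsilon)\vert^{2(\kappa-1)})$. The $-i$ in the statement and in \eqref{uvhere} is a sign typo that you reproduced rather than derived; it is harmless for the construction.
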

 In the following, we fix any $n\in \mathbb N$ and define
 \begin{align*}
 p^j:=\mathcal H^j\cap \{\operatorname{Im}(x_2)=0\}\in \mathbb R\quad \forall\,j\in \{1,\ldots,\kappa -1\}.
 \end{align*}
 Moreover, we fix $\rho_1>0$ small enough and put
 \begin{align*}
  x^j(\epsilon) &=  \rho_1 \e^{iH^j(\epsilon \rho_1^{-1})} \in \widetilde{\mathcal H}^j,
 \end{align*}
 for all $0<\epsilon\ll 1$.
 Here we have used assertion \ref{it:imag8} of Lemma \ref{lem:imagtime} with $w_1=\epsilon \rho_1^{-1}$, recall \eqref{rho1thetaw1}.
 We then consider the system \eqref{Deltauv} in the form
 \begin{equation}\label{eq:Deltauvj}
 \begin{aligned}
  i {P}(x,\epsilon) \frac{d\Delta s^j}{dx} &= (1-\frac{i}{2}{P}_x'(x,\epsilon)+\mathcal O(\vert(x,\epsilon)\vert^{2(\kappa -1)}))\Delta s^j,\\
  i {P}(x,\epsilon) \frac{d\Delta u^j}{dx} &= (-1-\frac{i}{2}{P}_x'(x,\epsilon)+\mathcal O(\vert(x,\epsilon)\vert^{2(\kappa -1)}))\Delta u^j,
 \end{aligned}
 \end{equation}
for $(\Delta s^j,\Delta u^j)(x,\epsilon)$, $x\in \widetilde{\mathcal H}^j$, $0<\epsilon\ll 1$, using initial conditions at $x=x^j(\epsilon)$ and $x=\overline{x^j(\epsilon)}=x^{2\kappa -1-j}(\epsilon)$. \eqref{Deltauv} is time reversible with respect to \eqref{symmetry}. By assertion \ref{it:imag8} of Lemma \ref{lem:imagtime}, we have
\begin{align}\label{eq:argxj}
 \operatorname{Arg}(x^j) = H^j(\epsilon\rho_1^{-1}) = \frac{\pi}{\kappa -1}(j-1)+\frac{\pi}{2(\kappa -1)}+\mathcal O(\epsilon)\in (0,\pi),
\end{align}
for all $0\le \epsilon\ll 1$. Here $\mathcal O(\epsilon)$ is a $C^\infty$-smooth function. In the statements below, we take the usual branch cut of the square root along the negative real axis.

%By working in the coordinates \eqref{} and \eqref{}, we obtain the following
 \begin{lemma}\label{lem:final}
 Let $j\in \{1,\ldots,\kappa -1\}$ and
 define $\mathcal I^j=\mathcal I^j(\epsilon)$ by
 \begin{align*}
 \mathcal I^j &= \epsilon^{\kappa -1} \int_{x^j}^{\epsilon p^j} \frac{-i}{{P}(x,\epsilon)}dx,\\
%    I_2 &= -\frac12 \log \frac{{Q}(p^j)}{{Q}(\epsilon^{-1} x^j)}-\log \epsilon^{-\frac{\kappa }{2}},
   \end{align*}
   with the complex integration along $\widetilde{\mathcal H}^j\cap\{\operatorname{Im}(x)\ge 0\}$, recall \eqref{tildeHj}.
   Then $\mathcal I^j$ has the following expansion
   \begin{align}
   \mathcal I^j &=  \sum_{l=1}^j \frac{\pi}{Q'(q^l)} + \mathcal O(\epsilon^{\kappa-1}),\label{eq:I1}
%     \begin{align*}
%  I_2 &= \mathcal O(1),\label{eq:I2}
% \end{align*}
\end{align}
with $\mathcal O(\epsilon^{\kappa-1})$ being complex valued and $C^\infty$-smooth with respect to $\epsilon\in [0,\epsilon_0)$, $0<\epsilon_0\ll 1$.
%    \end{align*}
% 

Next, consider \eqref{Deltauvj} with the following initial conditions:
\begin{align*}
 (\Delta s^j(x^j,\epsilon),\overline{\Delta s^j(x^j,\epsilon)}),
\end{align*}
at $x=x^j(\epsilon)$ for $j$ odd, and
\begin{align*}
 (\overline{\Delta u^j(\overline{x^j},\epsilon)},\Delta u^j(\overline{x^j},\epsilon)),
\end{align*}
at $x=\overline{x^j(\epsilon)}=x^{2\kappa -1-j}(\epsilon)$ for $j$ even. Then the following holds true (with $0<\epsilon_0\ll 1$):
% %  Moreover, the following holds
% \note{this should be divided into $j$ odd and $j$ even cf. \eqref{sumQkj}}
\begin{enumerate}
 \item Suppose that $j$ is odd such that
 \begin{align*}
  \sum_{l=1}^j \frac{1}{{Q}'(q^l)}<0,
 \end{align*}
 cf. \eqref{sumQkj}.
%  
%  \fbox{need to use the symmetry!}
Then there is a $C^n$-smooth function $\mathcal J^j:[0,\epsilon_0)\rightarrow \mathbb C$, such that
\begin{equation}\label{eq:Deltauvjfinal}
 \begin{aligned}
 \Delta s^j(\epsilon p^j,\epsilon) &= \epsilon^{-\frac{\kappa }{2}}\left(\frac{{P}(x^j,\epsilon)}{{Q}(p^j)}\right)^{\frac12} \exp\left(\epsilon^{1-\kappa } \mathcal I^j+\mathcal J^j\right)\Delta s^j(x^j,\epsilon),\\
%  &=
%  \epsilon^{-\frac{k}{2}} \exp\bigg({\epsilon^{1-k} \sum_{i=1}^j \frac{\pi}{Q'_k(q^l)} \left(1+\mathcal O(\epsilon)\right)} \\
%  &\quad -{i\epsilon^{1-k} i \sum_{l=1}^k\left( \frac{1}{Q'_k(q^l)} \log |p^j-q^l| + \mathcal O(\epsilon)\right)}\bigg) \Delta s^j(x^j(\epsilon),\epsilon),\\
 \Delta u^j(\epsilon p^j,\epsilon) &=\epsilon^{-\frac{\kappa }{2}} \left(\frac{{P}(\overline{x^j},\epsilon)}{{Q}(p^j)}\right)^{\frac12} \exp\left(\epsilon^{1-\kappa } \overline{\mathcal I^j}+\overline{ \mathcal J^j}\right)\overline{\Delta s^j( x^j,\epsilon)}.%\exp\left( -\overline I_1+\overline I_2+\mathcal O(1)\right)\Delta u^j(\overline{x^j}(\epsilon),\epsilon).
%  &=\epsilon^{-\frac{k}{2}} \exp \bigg({\epsilon^{1-k} \sum_{i=1}^j \frac{\pi}{Q'_k(q^l)} \left(1+\mathcal O(\epsilon)\right)}\\
%  &\quad +{i\epsilon^{1-k} \sum_{l=1}^k\left(\frac{1}{Q'_k(q^l)} \log |p^j-q^l| + \mathcal O(\epsilon)\right)}\bigg) \Delta u^j(\overline{x^j}(\epsilon),\epsilon).
%  \Delta u^j(\epsilon p^j,\epsilon) &= \left(\frac{{Q}(\epsilon p^j ) }{{Q}(\epsilon^{-1} x )}\right)^{\frac12} \e^{+\mathcal O(1)}\Delta s^j(x^j(\epsilon),\epsilon).
 \end{aligned}
 \end{equation}
\item  Suppose next that $j$ is even such that
 \begin{align*}
  \sum_{l=1}^j \frac{1}{{Q}'(q^l)}>0,
 \end{align*}
 cf. \eqref{sumQkj}.
Then there is a $C^n$-smooth function $\mathcal K^j:[0,\epsilon_0)\rightarrow \mathbb C$, such that
%  \begin{align*}
 \begin{align*}
 \Delta s^j(\epsilon p^j,\epsilon) &=\epsilon^{-\frac{\kappa }{2}} \left(\frac{{P}(\overline{x^j},\epsilon)}{{Q}(p^j)}\right)^{\frac12}  \exp\left( -\epsilon^{1-\kappa }\overline{\mathcal I^j}+\overline{\mathcal K^j}\right)\overline{\Delta u^j(\overline{x^j},\epsilon)},\\
 \Delta u^j(\epsilon p^j,\epsilon) &=\epsilon^{-\frac{\kappa }{2}} \left(\frac{{P}(x^j,\epsilon)}{{Q}(p^j)}\right)^{\frac12} \exp\left( -\epsilon^{1-\kappa } {\mathcal I^j}+ \mathcal K^j\right)\Delta u^j(\overline{x^j},\epsilon).
 \end{align*}
%  \note{double check these expressions}
 \end{enumerate}
 
%   \begin{pmatrix} \Delta s^j\\
%      \Delta u^j
%   \end{pmatrix}(\epsilon p^j,\epsilon) =
  
%   
% \left(\frac{{Q}(\epsilon p^j ) }{{Q}(\epsilon^{-1} x )}\right)^{\frac12} \exp \left(\Omega \left(i s + \mathcal O(\epsilon)\right)\right).
%  \end{align*}

 \end{lemma}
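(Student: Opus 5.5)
The proof splits into two parts: the expansion \eqref{I1} of $\mathcal I^j$, and the explicit integration of the diagonal system \eqref{Deltauvj}.

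\textbf{Expansion of $\mathcal I^j$.} Substitute $x=\epsilon x_2$ and use $P(\epsilon x_2,\epsilon)=\epsilon^\kappa Q(x_2)$. This gives
\begin{align*}
\mathcal I^j=\int_{\epsilon^{-1}x^j}^{p^j}\frac{-i}{Q(x_2)}\,dx_2 ,
\end{align*}
where the integral runs along the upper half of $\mathcal H^j$. Along $\mathcal H^j$ we have $dx_2=iQ(x_2)\,ds$, so the integrand times $dx_2$ equals $ds$. Hence $\mathcal I^j$ is (up to orientation) the imaginary-time elapsed from $\epsilon^{-1}x^j$ to $p^j$. As $\epsilon\to0$ the starting point tends to $\breve h^j$ at infinity, so this time tends to $\pm T^j$, and \eqref{sumQkj} identifies the limit as $\sum_{l\le j}\pi/Q'(q^l)$.

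The correction is the time needed to reach infinity from $|x_2|=\rho_1/\epsilon$. Compute it in the $(w_1,\theta)$ chart, writing the tail as $\int_0^{\epsilon/\rho_1}w_1^{\kappa-2}(\cdots)\,dw_1$ via \eqref{w1normalform2} with $\theta=H^j(w_1)$. The integrand $w_1^{\kappa-2}$ times a smooth function shows the tail is $\mathcal O(\epsilon^{\kappa-1})$ and $C^\infty$ in $\epsilon$. The sign convention is fixed by the orientation remark in the proof of \eqref{sumQkj}.

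\textbf{Integration of \eqref{Deltauvj}.} Divide by $iP$. For $\Delta s^j$ this gives
\begin{align*}
\frac{d\log\Delta s^j}{dx}=\frac{-i}{P}-\frac{P'_x}{2P}+\frac{\mathcal O(|(x,\epsilon)|^{2(\kappa-1)})}{iP}.
\end{align*}
Integrate each term from $x^j$ to $\epsilon p^j$ along $\widetilde{\mathcal H}^j$:
\begin{itemize}
\item The first term gives exactly $\epsilon^{1-\kappa}\mathcal I^j$.
\item The second gives $\tfrac12\log\big(P(x^j,\epsilon)/P(\epsilon p^j,\epsilon)\big)$. Since $P(\epsilon p^j,\epsilon)=\epsilon^\kappa Q(p^j)$, this produces the prefactor $\epsilon^{-\kappa/2}\big(P(x^j,\epsilon)/Q(p^j)\big)^{1/2}$. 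The branch is controlled by \eqref{argxj} and the chosen cut.
\item The remainder term defines $\mathcal J^j$.
\end{itemize}

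\textbf{Main obstacle: regularity of $\mathcal J^j$.} One must show $\mathcal J^j$ is bounded and $C^n$ in $\epsilon$ down to $\epsilon=0$, even though the path length in $x$ is $\mathcal O(1)$ and $P$ vanishes like $\epsilon^\kappa$ near $\epsilon p^j$. I split the path at $|x_2|=\varrho$.

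\begin{itemize}
\item In the $x_2$-chart the remainder is $\mathcal O(\epsilon^{2(\kappa-1)})$ and $dx/(iP)=\epsilon^{1-\kappa}ds$ with $s$ bounded. The contribution is therefore $\mathcal O(\epsilon^{\kappa-1})$.
\item In the $(\rho_1,w_1)$-chart the remainder is $\mathcal O(\rho_1^{2(\kappa-1)})$ and $dx/P\sim \rho_1^{-\kappa}\,d\rho_1$. The integrand is $\mathcal O(\rho_1^{\kappa-2})$, which is integrable uniformly in $w_1\in[0,\mu]$. Smoothness in $\epsilon$ follows from smooth dependence on $w_1=\epsilon/\rho_1$ and the $C^n$ statement of the preceding proposition.
\end{itemize}

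\textbf{The $\Delta u^j$ formula and the even case.} The $\Delta u^j$ formula for $j$ odd is not integrated separately. Apply the time-reversal symmetry \eqref{symmetry}: it maps the $\Delta s^j$ solution starting at $x^j$ to a $\Delta u^j$ solution starting at $\overline{x^j}$. Both end at the real point $\epsilon p^j$, so the conjugated formula follows and explains the $\overline{\mathcal I^j}$, $\overline{\mathcal J^j}$.

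For $j$ even the orientation of $\mathcal H^j$ reverses, so the integration starts at $\overline{x^j}$ with $\Delta u^j$. The exponent becomes $-\epsilon^{1-\kappa}\mathcal I^j$, and the same argument, with the symmetry again supplying the $\Delta s^j$ formula, gives $\mathcal K^j$.

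In both cases the sign condition on $\sum_{l\le j}1/Q'(q^l)$ is recorded only to note that the integration runs in the decaying direction, making the exponent equal to $-\epsilon^{1-\kappa}T^j$. This is consistent with \eqref{sumQkj}.
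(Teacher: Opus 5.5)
Your integration of \eqref{Deltauvj} follows the paper. You divide by $iP$ and let the $-P'_x/(2P)$ term produce $\epsilon^{-\kappa/2}(P(x^j,\epsilon)/Q(p^j))^{1/2}$ via $P(\epsilon p^j,\epsilon)=\epsilon^\kappa Q(p^j)$. You define $\mathcal J^j$ from the remainder, estimate it separately in the $x_2$- and $(\rho_1,w_1)$-coordinates, and get the $\Delta u^j$ formula from \eqref{symmetry}.

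The real difference is in \eqref{I1}. The paper closes $\widetilde{\mathcal H}^j$ with the arc $|x|=\rho_1$ from $\overline{x^j}$ to $x^j$ and applies the residue theorem. Conjugation symmetry doubles the path integral, and the arc contributes $\epsilon^{\kappa-1}$ times an integral over a fixed circle where $P$ is bounded away from zero. You instead read $\mathcal I^j$ as elapsed imaginary time, $\mathcal I^j=(-1)^j(T^j-\tau(\epsilon))$, where $\tau$ is the residual blow-up time from $|x_2|=\rho_1/\epsilon$. You import the limit from \eqref{sumQkj} and compute $\tau(\epsilon)=\int_0^{\epsilon/\rho_1}w_1^{\kappa-2}(\cdots)\,dw_1$ in the Poincar\'e chart. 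Both routes work.

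Yours reuses the earlier residue computation and shows that $\mathcal I^j$ is real with an analytic $\mathcal O(\epsilon^{\kappa-1})$ correction. The paper's is self-contained and reads the sign off the contour's positive orientation. You instead need the flow direction on $\mathcal H^j$ to match \eqref{sumQkj}: it moves upward through $p^j$ for $j$ odd and downward for $j$ even, since $\operatorname{sign}Q(p^j)=(-1)^{j+1}$. It does match, but you should say so rather than defer to ``the orientation remark''.

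Two steps need correcting.

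First, your even case is miscomputed. For $j$ even, forward imaginary time carries $\mathcal H^j$ from $\breve h^j$ in the upper half-plane down through $p^j$. So by \eqref{Deltauvj} it is $\Delta u^j$ started at $x^j$ that decays. Integrating it along the upper half gives exactly $\exp(-\epsilon^{1-\kappa}\mathcal I^j+\mathcal K^j)$ with prefactor $(P(x^j,\epsilon)/Q(p^j))^{1/2}$. If you start $\Delta u^j$ at $\overline{x^j}$ and integrate along the lower half, conjugating the path gives exponent $\int_{\overline{x^j}}^{\epsilon p^j} iP^{-1}\,dx=\epsilon^{1-\kappa}\overline{\mathcal I^j}=\epsilon^{1-\kappa}T^j+\mathcal O(1)$. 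That is exponential growth, contradicting your own closing remark about integrating in the decaying direction. In both even-case formulas the argument $\overline{x^j}$ of $\Delta u^j$ should be $x^j$; note $\Delta u^j(x^j,\epsilon)=\overline{\Delta s^j(\overline{x^j},\epsilon)}$ by \eqref{symmetry}. You reproduced this slip from the statement instead of catching it.

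Second, ``smoothness in $\epsilon$ follows from smooth dependence on $w_1=\epsilon/\rho_1$'' does not hold up. Each $\epsilon$-derivative of $g(\rho_1,\epsilon/\rho_1)$ costs a factor $\rho_1^{-1}$, while the $\rho_1$-integration reaches down to $\rho_1\sim\epsilon/\mu$. For example, take $F=0$ and let $\epsilon^{\kappa-1}x^{\kappa-1}y$, $\epsilon^{\kappa-1}x^{\kappa-1}z$ be the only $(y,z)$-dependent terms of $G$, $H$; this is admissible by \eqref{Xcond}. Then the diagonal remainder is exactly $i\epsilon^{\kappa-1}x^{\kappa-1}$. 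Its contribution to $\mathcal J^j$ is $\epsilon^{\kappa-1}\int_{\epsilon^{-1}x^j}^{p^j}x_2^{\kappa-1}Q(x_2)^{-1}\,dx_2$, which contains $-\epsilon^{\kappa-1}\log\epsilon$. So $\mathcal J^j$ need not be $C^{\kappa-1}$ at $\epsilon=0$. What your chart splitting does prove is that $\mathcal J^j$ is bounded and continuous up to $\epsilon=0$, which is what the leading-order asymptotics use. The paper's one-line $C^n$ claim is no better supported, so you cannot close this by citing it.
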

 \begin{proof}
To prove \eqref{I1}, we
first notice that by the substitution
$x=\epsilon x_2$ and \eqref{PkQk}, we have that:
\begin{align*}
 \mathcal I^j& = (-i) \int_{\epsilon^{-1} x^j}^{p^j} \frac{1}{{Q}(x_2)}dx_2.
%  &=(-i) \sum_{l=1}^\kappa  \frac{1}{Q'(q^l)}\log \frac{p^j -q^l}{\epsilon^{-1} x^j-q^l}.
\end{align*}
We therefore consider the closed positively oriented curve $\widetilde \gamma^j(\epsilon)$ in the complex $x$-plane, defined as the union of $\widetilde{\mathcal H}^j$ and the circle arc from $\overline x^j(\epsilon)=\rho_1 \e^{iH^{2\kappa-1-j}(\epsilon \rho_1^{-1})}$ to $x^j(\epsilon)=\rho_1 \e^{iH^{j}(\epsilon \rho_1^{-1})}$ with radius $\rho_1>0$. Then by the residue theorem and the fact that $\widetilde{\mathcal H}^j$ is invariant with respect to conjugation, see assertion \ref{it:imag6} of Lemma \ref{lem:imagtime}, we find that
\begin{align*}
 \epsilon^{\kappa-1}\int_{\widetilde \gamma^j} \frac{1}{{P}(x,\epsilon)}dx &=2 \int_{\epsilon^{-1} x^j}^{p^j} \frac{1}{{Q}(x_2)}dx_2+\epsilon^{\kappa-1}\int_{H^{2\kappa-1-j}(\epsilon \rho_1^{-1})}^{H^{j}(\epsilon \rho_1^{-1})} \frac{i\rho_1 \e^{i\theta}}{P(\rho_1 \e^{i\theta},\epsilon)}d\theta\\
 &=2\pi i \sum_{l=1}^j \frac{1}{Q'(q^l)},
\end{align*}
for all $\epsilon>0$ small enough. Hence
\begin{align*}
  2\mathcal I^j &= (-i)\left(2\pi i \sum_{l=1}^j \frac{1}{Q'(q^l)} - \epsilon^{\kappa-1}\int_{H^{2\kappa-1-j}(\epsilon \rho_1^{-1})}^{H^{j}(\epsilon \rho_1^{-1})} \frac {i\rho_1 \e^{i\theta} }{P(\rho_1 \e^{i\theta};\epsilon)}d\theta\right)\\
  &=2 \sum_{l=1}^j \frac{\pi}{Q'(q^l) } +\mathcal O(\epsilon^{\kappa-1}),
\end{align*}
% recall \eqref{PkQk}.
%
% We then expand the right hand side with respect to $\epsilon$. For this purpose, we notice that
% since $q^{j+1}<p^j<q^j$ for all $j\in \{1,\ldots,\kappa -1\}$, we have that
% \begin{align}\label{eq:pjql}
%  \begin{cases}
%           p^j-q^l < 0 \mbox{ for $l\le j$},\\
%           p^j-q^l > 0 \mbox{ for $l>j$}.
%          \end{cases}
% \end{align}
% This then leads to
% \begin{align*}
%  \log \frac{p^j-q^l}{\epsilon^{-1} x^j - q^l} &=  \log \frac{p^j-q^l}{\epsilon^{-1} x^j} +\mathcal O(\epsilon) \\
%  &= \log |p^j-q^l| + \log \epsilon \rho_1^{-1} + i \begin{cases}
%                                        \pi-H^j(0) & l\le j\\
%                                        -H^j(0) & l> j
%                                      \end{cases}+\mathcal O(\epsilon).\end{align*}
% %                                      \note{Maybe improve the expansion}
%                                      Here we have used that $H^j\in (0,\pi)$ for all $j\in \{1,\ldots,\kappa -1\}$ together with \eqref{argxj}.
%                                      {Notice that we take $\operatorname{arg} (p^j-q^l)=\pi$ for $l\le j$ to ensure that the argument is in the principal domain.}
% %                                      Here we have used \eqref{argxj}.
% We therefore conclude
% \begin{align*}
%  \mathcal I^j&=(-i) \sum_{l=1}^\kappa  \frac{1}{Q'(q^l)} \log |p^j-q^l| +  \sum_{l=1}^j \frac{\pi}{Q'(q^l)}+\mathcal O(\epsilon),
% \end{align*}
with $\mathcal O(\epsilon^{\kappa-1})$ being $C^\infty$-smooth with respect to $\epsilon\in [0,\epsilon_0)$, $0<\epsilon_0\ll 1$. This completes the proof of \eqref{I1}.

%  \note{proof the expression for $I$ first!}
For the remainder of the proof, we suppose that $j$ is odd so that
%  For simplicity, we only focus
 \begin{align*}
  \sum_{l=1}^j \frac{1}{{Q}(q^l)}<0,
 \end{align*}
 recall \eqref{sumQkj}.
 The case with $j$ even can be treated in the same way. Since the system \eqref{Deltauvj} decouples we can treat $\Delta s^j(x,\epsilon)$ and $\Delta u^j(x,\epsilon)$ separately. Moreover, once we have determined $\Delta s^j(\epsilon p^j,\epsilon)$, then we obtain the expression for $\Delta u^j(\epsilon p^j,\epsilon)$ by applying the symmetry \eqref{symmetry}. We therefore solve for $\Delta s^j$ using an initial condition at $x=x^j$.
%  For $\Delta u^j(x,\epsilon)$, which we will treat subsequently, we will use an initial condition at $x=\overline{x^j}$.
Since the system \eqref{Deltauvj} is linear, we can directly compute
\begin{equation}\label{eq:ujI}
   \begin{aligned}
  \Delta s^j(\epsilon p^j,\epsilon) &= \left(\frac{{Q}(\epsilon^{-1} x^j)}{{Q}(p^j)}\right)^{\frac12} \e^{\epsilon^{1-\kappa }\mathcal I^j+\mathcal J^j}\Delta s^j(x^j,\epsilon)\\
  &=  \epsilon^{-\frac{\kappa }{2}}\left(\frac{{P}(x^j,\epsilon)}{{Q}(p^j)}\right)^{\frac12} \e^{\epsilon^{1-\kappa }\mathcal I^j+\mathcal J^j}\Delta s^j(x^j,\epsilon)
 \end{aligned}
 \end{equation}
 where
 \begin{align*}
%    I_1 &= \int_{x^j}^{\epsilon p^j} \frac{-i}{{P}(x,\epsilon)}dx,\\
%    I_2 &= -\frac12 \log \frac{{Q}(p^j)}{{Q}(\epsilon^{-1} x^j)},\\
   \mathcal J^j &= \int_{x^j}^{\epsilon p^j} \frac{1}{{P}(x,\epsilon)} \mathcal O(\vert(x,\epsilon)\vert^{2(\kappa -1)}) dx,
 \end{align*}
for $x^j=x^j(\epsilon)$. In deriving \eqref{ujI}, we have used \eqref{PkQk} and \eqref{logQk}. %Here the complex integration is along $x\in \widetilde{\mathcal H}^j$.
By working in the coordinates \eqref{x2r2} and \eqref{rho1thetaw1}, we can easily obtain that $\mathcal J^j=\mathcal O(1)$, being $C^n$-smooth with respect to $\epsilon$. This completes the proof.

 \end{proof}
\section{Completing the proof of Theorem \ref{thm:main}}\label{sec:complete}
We consider $j$ odd. The case $j$ even is similar. We then use \eqref{yl} and the assumptions of the theorem, to conclude that
\begin{align*}
(\Delta y^j,
\Delta z^j)(x^j(0),0) \ne (0,0),
\end{align*}
and therefore also
\begin{align*}
(\Delta s^j,
\Delta u^j)(x^j(0),0)\ne (0,0),
\end{align*}
by \eqref{uv}. However,
$\Delta s^j(x^j(0),0)=0$ leads to a contradiction. Indeed, by proceeding as in Lemma \ref{lem:final}, we obtain from $\Delta u^j(x^j(0),0)\ne 0$ that $\Delta u^j(\epsilon p^j,\epsilon)$ is exponentially large and unbounded as $\epsilon\rightarrow 0$. This contradicts Proposition \ref{prop:Wuspj2}. We are therefore left with $\Delta s^j(x^j(0),0)\ne 0$. We then obtain $(\Delta s^j,\Delta u^j)(\epsilon p^j,\epsilon)$ from \eqref{Deltauvjfinal} and in turn $(\Delta y^j,\Delta z^j)(\epsilon p^j,\epsilon)$ by using \eqref{uv}:
\begin{align*}
\begin{pmatrix}
\Delta y^j\\
\Delta z^j
\end{pmatrix}(\epsilon p^j,\epsilon) = 2\operatorname{Re} \begin{pmatrix}
Y^j \Delta s^j\\
\Delta s^j
\end{pmatrix}(\epsilon p^j,\epsilon).
\end{align*}
The result then follows from a simple calculation using \eqref{sumQkj} and \eqref{I1}, upon setting $(\Delta y_2^j,\Delta z_2^j)(x_2,\epsilon):=\epsilon^{-\kappa}(\Delta y^j,\Delta z^j)(\epsilon^{-1} x_2,\epsilon))$ (cf. \eqref{xyzx2y2z21}). Notice, from the construction above, that the function $\Xi^j$ is apriori only $C^n$.  However, the manifolds are unique and $C^\infty$-smooth for $\epsilon>0$ (in particular, they are independent of our choice of $n$ and $\rho_1>0$). We therefore conclude that the function $\Xi^j$ is indeed $C^\infty$ with respect $\epsilon\in [0,\epsilon_0)$.
% \section{Discussion}\label{sec:conclusion}
% \end{lemma}
\subsection*{Acknowledgement}
The author was funded by Danish Research Council (DFF) grant 4283-00014B.

% \newpage
\bibliography{refs}
\bibliographystyle{plain}
\newpage
\appendix

\section{On writing \eqref{3rd} in the form \eqref{x2y2z2new0}}\label{app:A}
In this appendix, we consider \eqref{3rd} repeated here for convinience:
\begin{align}
\epsilon^{2(\kappa-1)} f''' +f' = Q(f),\quad f=f(t).\eqlab{3rdapp}
\end{align}
We will bring it into the normal form \eqref{x2y2z2new0}.
First, by defining
\begin{align}\label{eq:x2y2z2f}
\begin{cases} x_2: = f+\epsilon^{2(\kappa -1)} f'',\\
y_2:=f',\\
z_2:=\epsilon^{\kappa -1} f'',
\end{cases}
\end{align}
we can easily write \eqref{3rd} as the first order slow-fast system:
\begin{equation}\label{eq:x2y2z2}
\begin{aligned}
 \dot x_2&=\epsilon^{\kappa -1} {Q}(x_2-\epsilon^{\kappa -1} z_2),\\
 \dot y_2&=z_2,\\
 \dot z_2 &=-y_2 + {Q}(x_2-\epsilon^{\kappa -1}z_2),
\end{aligned}
\end{equation}
where $\dot{()}=\frac{d}{d\tau}$ with $\tau = \epsilon^{1-\kappa }t$. For future reference, we notice that the change of coordinates
\begin{align}\label{xyzx2y2z2app}
(x_2,y_2,z_2)\mapsto  \begin{cases}
  x = \epsilon x_2,\\
  y =\epsilon^\kappa  y_2,\\
  z=\epsilon^\kappa  z_2,
 \end{cases}
\end{align}
brings \eqref{x2y2z2} into the following form:
\begin{equation}\label{eq:xyzP}
\begin{aligned}
 \dot x &=P(x-z,\epsilon),\\
 \dot y &=z,\\
 \dot z &=-y+P(x-z,\epsilon),
\end{aligned}
\end{equation}
where we have defined
% with $P$ given by
\begin{align}\label{eq:Pdefn}
 P(x,\epsilon) := \epsilon^\kappa  Q(\epsilon^{-1}x) = -x^\kappa +\sum_{\alpha=0}^{\kappa -1} \epsilon^{\kappa -\alpha }a_\alpha x^\alpha.
\end{align}
For $\epsilon=0$, \eqref{xyzP} reduces to
\begin{equation}\label{eq:xyzP0app}
\begin{aligned}
 \dot x &=-(x-z)^\kappa ,\\
 \dot y &=z,\\
 \dot z &=-y-(x-z)^\kappa .
\end{aligned}
\end{equation}
\begin{remark}\remlab{apprem}
 If we perturb \eqref{3rdapp} by functions of the form \eqref{Wpert}:
 \begin{align}\eqlab{3rdapppert}
\epsilon^{2(\kappa-1)} f''' +f' = Q(f)+\epsilon^{-\kappa} W(\epsilon f,\epsilon^{\kappa}f',\epsilon^{2\kappa-1}f'',\epsilon),
\end{align}
 with $W$ satisfying \eqref{Xcond}, then \eqref{xyzP0app} becomes
\begin{equation}\nonumber
\begin{aligned}
 \dot x &=-(x-z)^\kappa + W(x-z,y,z,\epsilon),\\
 \dot y &=z,\\
 \dot z &=-y-(x-z)^\kappa + W(x-z,y,z,\epsilon).
\end{aligned}
\end{equation}
The transformations below also bring \eqref{3rdapppert} into the normal form \eqref{x2y2z2new0}. This should be clear enough and we leave out further details.
\end{remark}

We now apply an $\epsilon$-dependent normal form transformation of the form
 $$(x_2,y_2,z_2)(\mapsto (\widetilde x_2,\widetilde y_2,\widetilde z_2),$$ given by
 \begin{equation}\label{eq:nft}
 \begin{aligned}
 \widetilde x_2 &= x_2+\epsilon^{2(\kappa -1)}{Q}'(x_2) y_2\\
 \widetilde y_2 &= y_2-{Q}(x_2)+\frac12 \epsilon^{\kappa -1} Q'(x_2) z_2,\\
  \widetilde z_2 &= z_2-\epsilon^{\kappa -1} {Q}'(x_2)Q(x_2).%+\frac14 \epsilon^{k-1} Q'(x_2) y_2.
%  \widetilde z &=z -{P}'_x(x,\epsilon){P}(x,\epsilon)+\frac14 {P}'_x(x,\epsilon)y,
 \end{aligned}
 \end{equation}
%   that (a) fixes $\epsilon$, (b) is $\mathcal O(k)$-close to the identity (with respect to the weights $(1,1,1,1)$), and finally (c) that
This transformation conjugates \eqref{x2y2z2} with the following set of equations
 \begin{equation}\label{eq:tildexuv}
 \begin{aligned}
 \dot{\widetilde x}_2  &= \epsilon^{\kappa -1}\left(Q(\widetilde x_2) + \epsilon^{-\kappa }\widetilde F(\epsilon \widetilde x_2,\epsilon^\kappa  \widetilde y_2,\epsilon^\kappa  \widetilde z_2,\epsilon)\right),\\%\sum_{n=0}^{\lfloor \frac{\kappa }{2}\rfloor} \frac{1}{2^{2n} n!^2} {P}^{(2n)}(\widetilde x,\epsilon) (\widetilde y^2+\widetilde z^2)^n+\widetilde R_{2\kappa -1},\\
 \dot{\widetilde y}_2  &= \widetilde z_2 -\frac12 \epsilon^{\kappa -1} {\widetilde y_2} {Q}'(\widetilde x_2) +\epsilon^{-\kappa }\widetilde G(\epsilon \widetilde x_2,\epsilon^\kappa  \widetilde y_2,\epsilon^\kappa  \widetilde z_2,\epsilon),\\
  \dot{\widetilde z}_2  &= -\widetilde y_2 -\frac12 \epsilon^{\kappa -1} {\widetilde z_2} {Q}'(\widetilde x_2) +\epsilon^{-\kappa }\widetilde H(\epsilon \widetilde x_2,\epsilon^\kappa  \widetilde y_2,\epsilon^\kappa  \widetilde z_2,\epsilon),
%   \dot \epsilon &=0,
%  \end{align*}
\end{aligned}
\end{equation}
where
\begin{align*}
\epsilon^{-\kappa }\widetilde F(\epsilon \widetilde x_2,\epsilon^\kappa  \widetilde y_2,\epsilon^\kappa  \widetilde z_2,\epsilon):=&Q(x_2-\epsilon^{\kappa -1} z_2)-Q(\widetilde x_2)-\epsilon^{\kappa -1}Q'(x_2) z_2\\
&+\epsilon^{2(\kappa -1)}Q''(x_2)Q(x_2)y_2,\\
\epsilon^{-\kappa }\widetilde G(\epsilon \widetilde x_2,\epsilon^\kappa  \widetilde y_2,\epsilon^\kappa  \widetilde z_2,\epsilon):=&\frac12 \epsilon^{\kappa -1}Q'(x_2)\left(Q(x_2-\epsilon^{\kappa -1}z_2)-Q(x_2)+\frac12 \epsilon^{\kappa -1} Q'(x_2)z_2\right)\\
&+\frac12 \epsilon^{\kappa -1}\left(Q'(\widetilde x_2)-Q'(x_2)\right)\widetilde y_2,\\
\epsilon^{-\kappa }\widetilde H(\epsilon \widetilde x_2,\epsilon^\kappa  \widetilde y_2,\epsilon^\kappa  \widetilde z_2,\epsilon):=&Q(x_2-\epsilon^{\kappa -1}z_2)-Q(x_2)+\frac12 \epsilon^{\kappa -1}\left(Q'(\widetilde x_2)\widetilde z_2+Q'(x_2)z_2\right)\\
&-\epsilon^{2(\kappa -1)} Q(x_2-\epsilon^{\kappa -1}z_2)\left(Q''(x_2)Q(x_2)+Q'(x_2)^2\right).
% \frac12 \epsilon^{\kappa -1}\widetilde y_2 (Q'(\widetilde x_2)-Q'(x_2))\\
% &+\frac34 \epsilon^{\kappa -1}Q'(x_2)(Q(x_2)-Q(x_2-\epsilon^{\kappa -1}z_2))\\
% &+\frac14 \epsilon^{2(\kappa -1)} Q''(x_2)Q(x_2-\epsilon^{\kappa -1}z_2)\\
% &+\frac{1}{16}\epsilon^{2(\kappa -1)}(Q'(x_2))^2 z_2,\\
\end{align*}
This follows from a simple calculation. Notice that $(x_2,y_2,z_2,\epsilon)$ on the right hand side are given by the inverse of \eqref{nft}. It is easy to see that
\begin{align}
 \widetilde W(\epsilon \widetilde x_2,\epsilon^\kappa  \widetilde y_2,\epsilon^\kappa  \widetilde z_2,\epsilon)=\mathcal O(\epsilon^{3\kappa -2}),\quad W=F,G,H,\label{eq:X2exp}
\end{align}
uniformly on compact sets $(\widetilde x,\widetilde y_2,\widetilde z_2)\in \mathbb C^3$.

Next, we turn to proving the property \eqref{Xcond}. For this, we define $(\widetilde x,\widetilde y,\widetilde z)$ and $(x,y,z)$ by \eqref{xyzx2y2z21} (with and without tildes).
Then \eqref{nft} takes the following form
 \begin{equation}\nonumber
 \begin{aligned}
 \widetilde x &= x+P_x'(x,\epsilon) y\\
 \widetilde y &= y-P(x,\epsilon)+\frac12 P_x'(x,\epsilon) z,\\
  \widetilde z &= z- P_x'(x,\epsilon)P(x,\epsilon),%+\frac14 \epsilon^{k-1} Q'(x_2) y_2.
%  \widetilde z &=z -{P}'_x(x,\epsilon){P}(x,\epsilon)+\frac14 {P}'_x(x,\epsilon)y,
 \end{aligned}
 \end{equation}
 recall \eqref{Pdefn}.
Moreover, it is then a simple calculation to show that $\widetilde F, \widetilde G$ and $\widetilde H$ can be written in the following form:
\begin{align*}
\widetilde F(\widetilde x,\widetilde y,\widetilde z,\epsilon):=&P(x-z,\epsilon)-P(\widetilde x,\epsilon)-P_x'(x,\epsilon) z\\
&+P_{xx}''(x,\epsilon)P(x,\epsilon)y,\\
\widetilde G(\widetilde x,\widetilde y,\widetilde z,\epsilon):=&\frac12 P_x'(x,\epsilon)\left(P(x-z,\epsilon)-P(x,\epsilon)+\frac12 P_x'(x)z\right)\\
&+\frac12 \left(P_x'(\widetilde x,\epsilon)-P_x'(x,\epsilon)\right)\widetilde y,\\
\widetilde H(\widetilde x,\widetilde y,\widetilde z,\epsilon):=&P(x-z,\epsilon)-P(x,\epsilon)+\frac12 \left(P_x'(\widetilde x,\epsilon)\widetilde z+P_x'(x,\epsilon)z\right)\\
&-P(x-z,\epsilon)\left(P_{xx}''(x,\epsilon)P(x,\epsilon)+P_x'(x,\epsilon)^2\right).
% \frac12 \epsilon^{k-1}\widetilde y_2 (Q'(\widetilde x_2)-Q'(x_2))\\
% &+\frac34 \epsilon^{k-1}Q'(x_2)(Q(x_2)-Q(x_2-\epsilon^{k-1}z_2))\\
% &+\frac14 \epsilon^{2(k-1)} Q''(x_2)Q(x_2-\epsilon^{k-1}z_2)\\
% &+\frac{1}{16}\epsilon^{2(k-1)}(Q'(x_2))^2 z_2,\\
\end{align*}
% For reference, we notice that
% \begin{equation}\label{eq:RST0}
% \begin{aligned}
% \widetilde F(\widetilde x,\widetilde y,\widetilde z,0)=\\
% \widetilde G(\widetilde x,\widetilde y,\widetilde z,0)=\\
% \widetilde H(\widetilde x,\widetilde y,\widetilde z,0)=
% \end{aligned}
% \end{equation}
% This follows from a simple calculation.
% where
% This gives the following system
% \begin{align*}
% \dot{\widetilde x} &= P(\widetilde x,\epsilon)+\widetilde F(\widetilde x,\widetilde y,\widetilde z,\epsilon)
% \end{align*}
%
We then prove \eqref{Xcond} in each of the directional charts corresponding to $\breve x=\pm 1$, $\breve y=\pm 1$, $\breve z=\pm 1$, and $\breve \epsilon=\pm 1$. The details of $\breve \epsilon=1$ follows from \eqref{X2exp}. Consider the $\breve x=1$-chart with chart-specific coordinates $(r_1,y_1,z_1,\epsilon_1)$, defined by
\begin{align}\label{eq:thiss}
 \begin{cases}
  x = r_1,\\
  y =r_1^\kappa  y_1,\\
  z = r_1^\kappa  z_1,\\
  \epsilon = r_1 \epsilon_1.
 \end{cases}
\end{align}
We define $(\widetilde r_1,\widetilde y_1,\widetilde z_1,\widetilde \epsilon_1)$ similarly by adding tildes to \eqref{thiss} ($\widetilde \epsilon=\epsilon$), so that $\widetilde x = \widetilde r_1$, $\widetilde y = \widetilde r_1^\kappa  \widetilde y_1$, etc. Then
\begin{align*}
 P(r_1,r_1\epsilon_1) = r_1^\kappa  P(1,\epsilon_1),
\end{align*} and \eqref{nft} takes the following form
 \begin{equation}\nonumber
 \begin{aligned}
 \widetilde r_1 &= r_1\left(1 +r_1^{2(\kappa -1)} P_x'(1,\epsilon_1) y_1\right)\\
 \widetilde y_1 &= y_1-P(1,\epsilon_1)+\frac12 r_1^{\kappa -1} P_x'(1,\epsilon_1) z_1,\\
  \widetilde z_1 &= z_1- r_1^{\kappa -1}P_x'(1,\epsilon_1)P(1,\epsilon_1).%+\frac14 \epsilon^{\kappa -1} Q'(x_2) y_2.
%  \widetilde z &=z -{P}'_x(x,\epsilon){P}(x,\epsilon)+\frac14 {P}'_x(x,\epsilon)y,
 \end{aligned}
 \end{equation}
 with
 \begin{align*}
  \widetilde \epsilon_1 = \epsilon_1 \left(1 +r_1^{2(\kappa -1)} P_x'(1,\epsilon_1) y_1\right)^{-1}.
 \end{align*}
Consequently, we have
\begin{align*}
\widetilde F(\widetilde r_1,\widetilde r_1\widetilde y_1,\widetilde r_1 \widetilde z_1, \widetilde r_1 \widetilde \epsilon_1):=&r_1^{\kappa } P(1-r_1^{\kappa -1} z_1,\epsilon_1)-\widetilde r_1^\kappa  P(1,\widetilde \epsilon_1)-r_1^{2\kappa -1}P_x'(1,\epsilon_1) z_1\\
&+r_1^{3\kappa -2} P_{xx}''(1,\epsilon_1)P(1,\epsilon_1)y_1.
\end{align*}
By Taylor-expanding the right hand side with respect to $\widetilde r_1\rightarrow 0$, we find that
\begin{align*}
 \widetilde F(\widetilde r_1,\widetilde r_1\widetilde y_1,\widetilde r_1 \widetilde z_1, \widetilde r_1 \widetilde \epsilon_1) = \mathcal O(\widetilde r_1^{3\kappa -2}),
\end{align*}
uniformly on compact sets $(\widetilde y_1,\widetilde z_1,\widetilde \epsilon_1)\in \mathbb C^3$.
 The details of $\widetilde W(\widetilde r_1,\widetilde r_1\widetilde y_1,\widetilde r_1 \widetilde z_1, \widetilde r_1 \widetilde \epsilon_1)$, $W=G,H$, as well as the details of the remaining charts, are similar and therefore left out for simplicity.

\end{document}